\newtheorem{thm}{Theorem}[section]
\newtheorem{prop}[thm]{Proposition}
\newtheorem{lem}[thm]{Lemma}
\newtheorem{cor}[thm]{Corollary}
\theoremstyle{definition}
\newtheorem{defn}[thm]{Definition}
\newtheorem{exam}[thm]{Example}
\newtheorem{rem}[thm]{Remark}
\newcommand{\Isom}{\operatorname{Isom}}
\newcommand{\Aut}{\operatorname{Aut}}
\newcommand{\OG}{\mathsf{O}}
\newcommand{\dS}{\mathsf{dS}}
\newcommand{\GL}{\operatorname{GL}}
\newcommand{\Ad}{\operatorname{Ad}}
\newcommand{\ad}{\operatorname{ad}}
\newcommand{\Der}{\operatorname{Der}}
\newcommand{\bT}{\mathbb{T}}
\newcommand{\bR}{\mathbb{R}}
\newcommand{\bN}{\mathbb{N}}
\newcommand{\bZ}{\mathbb{Z}}
\title[Isometries of spacetimes without observer horizons]{Isometries of spacetimes \\ without observer horizons}
\date{\today}
\author{Leonardo Garc\'ia-Heveling}
\address{Fachbereich Mathematik, Universit\"at Hamburg, Germany}
\curraddr{Facult\"at f\"ur Mathematik, Universit\"at Wien, Austria}
\email{leonardo.garcia.heveling@univie.ac.at}
\urladdr{https://leogarciaheveling.github.io/}
\author{Abdelghani Zeghib}
\address{UMPA, CNRS, \'Ecole Normale Sup\'erieure de Lyon, France}
\email{abdelghani.zeghib@ens-lyon.fr}
\urladdr{http://www.umpa.ens-lyon.fr/~zeghib/}
\thanks{We thank the organizers of the conference ``Contact and Lorentzian Geometry II'' at the Ruhr-Universit\"at Bochum, where this collaboration started. LGH thanks Eric Ling for interesting conversations about cosmology. This work was supported in part by the Austrian Science Fund (FWF) [Grant DOI 10.55776/EFP6]. For open access purposes, the authors have applied a CC-BY public copyright license to any author accepted manuscript version arising from this submission.}
\subjclass[2010]{53C50 (primary), 58D19 (secondary)}
\keywords{Isometry group, global hyperbolicity, Lorentzian manifold, Lie group action, time function}
\begin{document}

\begin{abstract}
 We study the isometry groups of (non-compact) Lorentzian manifolds with well-behaved causal structure, aka causal spacetimes satisfying the ``no observer horizons'' condition. Our main result is that the group of time orientation-preserving isometries acts properly on the spacetime. As corollaries, we obtain the existence of an invariant Cauchy temporal function, and a splitting of the isometry group into a compact subgroup and a subgroup roughly corresponding to time translations. The latter can only be the trivial group, $\mathbb{Z}$, or $\mathbb{R}$.
\end{abstract}

\maketitle

\tableofcontents

\section{Introduction}

The celebrated theorem of Myers and Steenrod \cite{MyeSte} states that the isometry group $\Isom(M,g)$ of a Riemannian manifold $(M,g)$ is a Lie group, and that if $M$ is compact, then so is $\Isom(M,g)$. While the first part is true for semi-Riemannian manifolds of any signature, the statement about compactness is not. In particular, there exist simple examples of Lorentzian metrics on the torus with non-compact isometry group.

Recall that on a spacetime (a Lorentzian manifold with a time-orientation, i.e.\ a preferred choice of ``future'' light-cones), one has the \emph{causal relation} between points, defined as follows: $p \leq q$ if and only if there exists a future-directed causal curve from $p$ to $q$, meaning that the tangent vector stays in the solid future-lightcone. Time-orientation preserving isometries clearly preserve the causal relation, a fact that we exploit in this paper.

On compact spacetimes, it is known that the causal relation cannot be anti-symmetric, because there always exist closed causal curves. Because of this, spacetimes considered in general relativity are generally non-compact, since closed causal curves would imply the possibility of time-travel, which is considered unphysical. In the present paper, we will see that constraints on the causal relation lead to isometries being quite rigid, because they have to preserve the global causal structure. This contrasts with the defining property of an isometry, i.e.\ preserving the metric tensor, which is merely local.

In addition to causality (i.e.\ antisymmetry of $\leq$), we assume the ``no future observer horizons'' condition (NFOH), which ensures that every observer is able to receive signals from every point in spacetime (after waiting long enough). The precise definition is that $I^-(\gamma) = M$ for every future-inextendible causal curve $\gamma$. This condition has been extensively studied in the context of the causal boundary and Bartnik's splitting conjecture \cite{GalRev}. As pointed out by Penrose \cite{Pen}, the combination of causality and the NFOH implies another standard causality assumption, namely global hyperbolicity with compact Cauchy surfaces. It has been shown by various authors that spacetimes with certain kinds of ``time-translation like'' symmetry must satisfy the NFOH \cite{CSFH,GalSym,GarBoundary}. Here, we turn the situation around, and study how the NFOH restricts the possible symmetries of the spacetime.

\begin{thm} \label{thm:IsomproperIntro}
 Let $(M,g)$ be a causal spacetime satisfying the no future observer horizons condition. Then the group $\Isom^\uparrow(M,g)$ of time orientation preserving isometries acts properly on $M$.
\end{thm}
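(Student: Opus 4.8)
The plan is to begin from Penrose's observation, quoted in the introduction, that causality together with the NFOH condition forces $(M,g)$ to be globally hyperbolic with compact Cauchy surfaces; I would fix once and for all a Cauchy temporal function, and record that the limit curve theorem is then available. I would then reduce properness to a statement about differentials. Recall that an isometry $g$ is determined by its $1$-jet $(g(x_0), dg_{x_0})$ at any point, and that $g \mapsto (g(x_0), dg_{x_0})$ is a homeomorphism of $\Isom^\uparrow(M,g)$ (a Lie group by Myers--Steenrod) onto a closed subset of the bundle of time-oriented metric frame-isomorphisms. Consequently a subset of $\Isom^\uparrow(M,g)$ is relatively compact if and only if, for one (hence every) point, the image points and the differentials there remain in a compact set. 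By the standard criterion for proper actions, it suffices to show that for every compact $K$ the set $\{g : gK \cap K \ne \emptyset\}$ is relatively compact. So suppose not: there are $g_n \in \Isom^\uparrow(M,g)$ and points $p_n \to p$ with $q_n := g_n p_n \to q$, yet $g_n$ admits no convergent subsequence. Since the base points $q_n$ converge, the only obstruction is that the differentials $dg_{p_n}$, measured in a fixed auxiliary Riemannian metric, are unbounded. As these are Lorentz transformations between orthonormal frames, the $KAK$ decomposition shows that unboundedness means precisely that, after passing to a subsequence, their boost part has rapidity tending to $+\infty$ in some $2$-plane. This is the only case to rule out.

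The heart of the matter, which I expect to be the main obstacle, is to show that such an unbounded sequence of boosts contradicts NFOH by manufacturing a future observer horizon. Fix a future-directed timelike unit vector $u$ at $p$ and a frame field identifying nearby tangent spaces, so that $u_n \to u$ and $v_n := dg_{p_n} u_n$ is a future-directed timelike unit vector at $q_n$. Because the rapidity blows up, the direction of $v_n$ (in the auxiliary metric) converges to a future-directed null direction $n_+$ at $q$, while $dg_{p_n}$ expands the null line $\bR n_+$ by a factor $\sim e^{r_n} \to \infty$ and contracts the conjugate null line by $\sim e^{-r_n}$. Choosing $a \in I^-(p)$ and $b \in I^+(p)$, the compact causal diamonds $J^+(g_n a) \cap J^-(g_n b) = g_n\big(J^+(a)\cap J^-(b)\big)$ are isometric copies of a fixed diamond, centred near $q$, but elongated along $\bR n_+$ and squeezed transversally; in particular the future tips $g_n b$ escape to infinity along the future null ray in direction $n_+$. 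Applying the limit curve theorem to the null ``spines'' of these diamonds, I would extract a future-inextendible causal curve $\gamma$ through $q$ that is asymptotically null in the direction $n_+$.

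It then remains to check that $I^-(\gamma) \ne M$, contradicting NFOH. The mechanism, transparent in the Minkowski model where a future null ray sees only the open half-space on one side of the null hyperplane it spans, is that a curve asymptotic to $n_+$ has its chronological past confined to one side of the limiting null geodesic: the transversal collapse by $e^{-r_n}$ prevents the pasts $I^-(g_n b)$ from ever reaching the region accessible in the conjugate null direction. Concretely, I would produce a fixed point $m$ lying to the future across the limiting null geodesic, obtained as a limit of points that these elongating diamonds systematically miss on their contracting side, and show, using the achronality of null geodesics before their first conjugate point together with the uniform transversal collapse, that $m \notin I^-(\gamma)$. This contradicts $I^-(\gamma) = M$, rules out escaping boosts, and thereby establishes properness. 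The delicate part throughout is to carry out both the limit-curve extraction and the ``one-sidedness of the past'' estimate intrinsically on the curved manifold, where no conformal null infinity is available and one must argue directly with the limit curve and the invariant geometry of the degenerating diamonds.
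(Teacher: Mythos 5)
Your reduction is sound and coincides with the paper's: properness follows once one rules out degeneration of the differentials (the paper does this via properness of the action of $\Isom(M,g)$ on the orthonormal frame bundle, Theorem~\ref{Thm:OM}), and degeneration means exactly that a boost part blows up, i.e.\ some rescaled image vectors converge to a null vector $w$ at $q$. The divergence --- and the gap --- is in the endgame. You propose to contradict the NFOH by exhibiting an observer horizon: extract the limiting null geodesic $\gamma(s)=\exp_q(sw)$ and produce a point $m\notin I^-(\gamma)$. But that step is essentially the contrapositive of the theorem itself (non-properness implies failure of NFOH), so your plan reduces the theorem to a statement of the same strength; and the justification you offer for it cannot go through. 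The ``one-sidedness of the past'' of a null ray is a feature of Minkowski or de Sitter geometry; under the standing hypothesis NFOH it is precisely false: NFOH is equivalent to the absence of future lightlike rays (see the proof of Proposition~\ref{prop:NFOHCauchy}), so the limiting geodesic $\gamma$ cannot be achronal and must re-enter $I^+(q)$. Your proposed tools --- achronality of null geodesics before the first conjugate point, and the transversal collapse of $dg_{p_n}$ --- are local facts, controlled by $1$-jets and valid in normal neighborhoods, whereas $m\notin I^-(\gamma)$ is a global claim about an entire inextendible curve in a spacetime with compact Cauchy surfaces, where connecting timelike curves may wrap around; likewise $I^-(g_nb)=g_n(I^-(b))$ is the image of a global set whose placement in $M$ is not determined by $dg_{p_n}$.

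The paper uses NFOH in the opposite, positive direction, and this is the idea your sketch is missing. Since $s\mapsto\exp_q(sw)$ is a future-inextendible causal curve, NFOH gives $I^-$ of this curve equal to $M$, hence $q\ll\exp_q(c'w)$ for some $c'>0$ --- no horizon needs to be manufactured. One then pulls this relation back: writing $w=\lim_n c_n D\phi_n(v_n)$ with $v_n$ spacelike, $v_n\to v\neq 0$ and $c_n\to 0$, one has $\exp_q(c'w)=\lim_n \phi_n\bigl(\exp_{p_n}(c'c_nv_n)\bigr)$, so openness of $\ll$ yields $\phi_n(p_n)\ll\phi_n\bigl(\exp_{p_n}(c'c_nv_n)\bigr)$ for large $n$, i.e.\ $p_n\ll\exp_{p_n}(c'c_nv_n)$. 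Since $c'c_n\to 0$, these are arbitrarily small spacelike exponential displacements inside a fixed convex normal neighborhood of $p$, which are never chronologically related --- a purely local contradiction. In short: instead of trying to prove that the past of $\gamma$ misses a point (which, under NFOH, no valid local argument can establish), one uses the hypothesis $I^-(\gamma)=M$ to transport a chronology relation backwards through the isometries and violate local causality. Your proof as written stalls exactly at the step you yourself flag as delicate, and that step is where the entire difficulty of the theorem sits.
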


Under the same assumptions, the following two statements hold.

\begin{cor} \label{cor:timeIntro}
 There exists a Cauchy temporal function $\tau \colon M \to \bR$ such that the action of $\Isom^\uparrow(M,g)$ on $M$ preserves the {differential} $d\tau$.
\end{cor}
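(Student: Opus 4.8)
The plan is to start from an arbitrary Cauchy temporal function and average it over $G := \Isom^\uparrow(M,g)$, using the properness from Theorem~\ref{thm:IsomproperIntro} to make the averaging converge, and the convexity of the timelike cone to keep the averaged object temporal. First I would collect the standing structure: by Penrose's observation $(M,g)$ is globally hyperbolic with compact Cauchy surfaces, so the Bernal--S\'anchez theorem provides a Cauchy temporal function $\tau_0$; by Myers--Steenrod $G$ is a Lie group, and by Theorem~\ref{thm:IsomproperIntro} it acts properly. Two consequences of properness that I would invoke are the existence of a $G$-invariant complete Riemannian metric $h$ on $M$, and the existence of cutoff functions adapted to the action (a nonnegative $f$ whose support meets each orbit in a compact set and with $\int_G f(\phi\cdot p)\,d\phi\equiv 1$); both are standard for proper Lie group actions. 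I would also restate the goal intrinsically: since $M$ is connected, $d\tau$ is $G$-invariant iff $\tau\circ\phi-\tau$ is a constant $c(\phi)$ for every $\phi$, so we are really looking for a Cauchy temporal function that is equivariant with respect to a ``time-translation'' character $c\colon G\to\bR$.

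The key geometric observation is that the differentials of all future-increasing Cauchy temporal functions lie, pointwise, in one fixed open convex cone of timelike covectors, and this cone is $G$-invariant because isometries in $G$ preserve the metric and the time orientation. Consequently any probability-weighted average of such differentials again takes values in this cone, hence is again timelike and future-directed. In particular, if $G$ were compact I would simply set $\bar\tau=\int_G \tau_0\circ\phi\,d\phi$ with respect to normalized Haar measure: then $\bar\tau$ is smooth and $G$-invariant, $d\bar\tau=\int_G \phi^* d\tau_0\,d\phi$ is closed (the weights are constant), timelike and future-directed by convexity, and one checks its level sets are still compact Cauchy surfaces, so $\bar\tau$ is the desired (fully) invariant Cauchy temporal function.

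The real work is the non-compact case, and this is where I expect the main obstacle. Two difficulties appear simultaneously. First, if one averages $\phi^* d\tau_0$ over the orbit with the point-dependent cutoff weights $f(\phi\cdot p)$, the resulting $G$-invariant timelike $1$-form is generally not closed, because $d$ does not commute with the weighting; so the convexity argument alone does not produce a differential. Second, averaging with constant weights over a non-compact $G$ diverges, and this divergence is intrinsic: it reflects precisely the time-translation part of $G$, for which no fully invariant $\tau$ can exist. I would reconcile closedness, invariance and the timelike property as follows. Using properness one extracts a maximal compact subgroup $K\le G$ and a complementary ``non-compact direction''; averaging over $K$ with constant weights already yields, as above, a $K$-invariant Cauchy temporal function $\tau_1$ with $d\tau_1$ closed, exact and in the cone. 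It then remains to average $d\tau_1$ along the non-compact direction. Here I would use that properness forces this direction to be at most one-dimensional (of time-translation type, isomorphic to $\bR$ or $\bZ$): along a one-parameter flow acting by such translations, a Ces\`aro/Busemann-type limit of $d\tau_1$ produces a closed, $G$-invariant timelike $1$-form $\omega$ together with the character $c$, with the one-dimensionality guaranteeing that the limit exists and stays in the convex cone.

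Finally I would verify that $\omega$ integrates to the required function. Since $\omega$ is closed and lies in the future timelike cone, it is locally the differential of a future-increasing temporal function; exactness on $M$ follows from the fact that $\omega$ is cohomologous to the exact form $d\tau_0$ together with the explicit control coming from the $K$-average and the rank-one correction, giving a genuine $\tau\colon M\to\bR$ with $d\tau=\omega$. That $\tau$ is a Cauchy temporal function --- smooth, with past-directed timelike gradient and compact Cauchy level sets --- is inherited from $\tau_0$ because $\omega$ stays uniformly inside the cone, so $\tau$ is strictly increasing and proper along every inextendible causal curve. By construction $d\tau=\omega$ is $G$-invariant, equivalently $\tau\circ\phi-\tau\equiv c(\phi)$, which is exactly the assertion. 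The most delicate point, and the one I would spend the most care on, is the simultaneous control of closedness/exactness and the timelike cone condition while averaging over the non-compact time-translation direction.
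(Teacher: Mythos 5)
Your compact-$G$ case is exactly the paper's argument and is fine. The non-compact case, however, rests on a circular step: the structural input you invoke --- that properness forces the non-compact part of $G := \Isom^\uparrow(M,g)$ to be at most one-dimensional, of time-translation type $\bZ$ or $\bR$ --- is precisely Corollary~\ref{cor:prodintro} (Theorem~\ref{thm:prod}), and in the paper that statement is \emph{deduced from} the invariant temporal function you are trying to build: the invariant $d\tau$ gives a homomorphism from $G$ to the translations of $\bR$ with compact kernel, and only then does Lemma~\ref{lem:semidirect} yield $L \in \{\{e\},\bZ,\bR\}$. Properness alone implies nothing of the sort (the translation group $\bR^{1,n}$ acts properly on Minkowski space, with an $(n+1)$-dimensional ``non-compact direction''); the one-dimensionality here is forced by the causal structure, and the only access to it is the very function under construction. (Likewise, the existence of a maximal compact subgroup of $G$, which may have infinitely many components, is not automatic before one knows the splitting.) Separately, even granting the splitting, your Ces\`aro/Busemann step is not established: at a fixed point $p$ the covectors $(\phi^k)^* d\tau_1$ need not stay bounded (the norm of $d\tau_1$ at $\phi^k(p)$, measured in an invariant Riemannian metric, can grow without bound along the escaping orbit), so the averages need not converge; and even if they do converge, a limit of convex combinations of covectors in the open timelike cone lies only in the \emph{closed} cone, so the limit form could be null or vanish somewhere --- strict timelikeness does not pass to limits, nor are smoothness and exactness of the limit addressed.

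The paper's mechanism for the non-compact case avoids both issues and is the idea missing from your proposal: do not decompose $G$ at all; instead replace $\hat\tau$ by $\tau_0 = f\circ\hat\tau$ whose differential is supported in a compact slab $K$, and set $d\tau = \int_G \phi^* d\tau_0\, d\mu(\phi)$. Properness makes this a finite integral pointwise (only the compact set $G_p$ contributes at each $p$), so invariance, closedness/exactness, and smoothness come essentially for free, and timelikeness reduces to the geometric statement that every $G$-orbit meets $K$. That statement is where the NFOH genuinely enters: Lemma~\ref{lem:NOHCauchy} provides totally timelike connected Cauchy surfaces, properness plus non-compactness of $G$ provides an isometry $\phi$ pushing a Cauchy surface into its own strict future, and the uniform Lorentzian-distance estimate \eqref{eq:length} shows that the iterates $\phi^i$ of the resulting slab cover all of $M$. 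Cauchy-ness of the resulting $\tau$ then follows from Lemma~\ref{lem:tauCauchy}, since the covering by slabs upgrades the NFOH to the NOH. If you want to keep your outline, the compact-support averaging trick is the step to import; the splitting of $G$ must be treated as a consequence of the corollary, not as an ingredient of its proof.
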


\begin{cor} \label{cor:prodintro}
 The isometry group splits as a semi-direct product
 \begin{equation} \label{eq:prodintro}
  \Isom^\uparrow(M,g) = L \ltimes N,
 \end{equation}
 where $N$ is a compact Lie group, and $L$ is either trivial, $\bZ$, or $\bR$. Any function $\tau$ as in Corollary~\ref{cor:timeIntro} is preserved by $N$, while $L$ acts on $\tau$ by translations. In the case that $L = \bR$, the connected component of the identity splits as a direct product
 \begin{equation*}
  \Isom^{\uparrow, 0}(M,g) = \bR \times N^0.
 \end{equation*}
\end{cor}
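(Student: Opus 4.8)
The plan is to deduce this structural statement from the earlier theorem (properness of the $\Isom^\uparrow$-action) together with Corollary~\ref{cor:timeIntro}, using the Cauchy temporal function $\tau$ as the organizing tool. First I would recall that a proper smooth action of a Lie group $G = \Isom^\uparrow(M,g)$ on a manifold $M$ has a compact stabilizer at every point and, more importantly, that $G$ is itself a Lie group (by Myers--Steenrod applied to the semi-Riemannian setting) acting properly; properness is what lets us extract a compact factor. The key new ingredient is the invariant differential $d\tau$ from Corollary~\ref{cor:timeIntro}: since every $\phi \in G$ preserves $d\tau$, there is a well-defined real number $c(\phi)$ with $\phi^*\tau = \tau + c(\phi)$, because $d(\phi^*\tau - \tau) = \phi^* d\tau - d\tau = 0$ and $M$ is connected. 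This yields a continuous (indeed smooth) homomorphism
\begin{equation*}
 c \colon \Isom^\uparrow(M,g) \to (\bR, +),
\end{equation*}
which I would check is a group homomorphism from the cocycle-type computation $\phi^*\psi^*\tau = \phi^*(\tau + c(\psi)) = \tau + c(\phi) + c(\psi)$.

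Next I would analyze this homomorphism. Its image is a subgroup of $\bR$, so it is either trivial, a discrete infinite cyclic group $\cong \bZ$, or dense; and its kernel $N := \ker c$ is exactly the subgroup preserving $\tau$ itself. I would argue that $N$ is compact: $N$ preserves each level set $\tau^{-1}(t)$, which is a compact Cauchy surface (compactness of Cauchy surfaces being guaranteed by the Penrose observation cited in the introduction, under causality plus NFOH), and a proper action restricted to a closed invariant subset with compact orbits forces compactness of the acting group. More carefully, since the action of $G$ on $M$ is proper and $N$ acts on the compact submanifold $S := \tau^{-1}(0)$ preserving the induced Riemannian metric, $N$ embeds as a closed subgroup of the compact group $\Isom(S)$, hence $N$ is compact. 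For the image, I would rule out the dense case: a one-parameter family of isometries shifting $\tau$ densely would, by properness, have to be compact in $G$, but then $c(N^{\mathrm{closure}})$ would be a compact dense subgroup of $\bR$, forcing triviality — so the image is closed, hence trivial, $\bZ$, or $\bR$. I set $L$ equal to this image (lifted to a section), giving the semidirect product $G = L \ltimes N$ once a splitting is produced.

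The splitting and the final direct-product claim require constructing a section of $c$. When the image is $\bR$, I would produce a one-parameter subgroup realizing $L = \bR$: properness plus the structure theory of Lie groups (the image of the identity component under $c$ is a connected subgroup of $\bR$, hence all of $\bR$ when the image is $\bR$) lets me split $\Isom^{\uparrow,0} = \bR \times N^0$, where the flow generating $\bR$ commutes with $N^0$ because $N^0$ preserves $\tau$ while the $\bR$-flow translates it — their commutator lies in $N$ and has trivial $c$-image and can be normalized away using that $N^0$ is the identity component of a compact group and the $\bR$-factor is central in the connected component. In the $\bZ$ case I pick a single generating isometry; in the trivial case $L$ is trivial and $G = N$ is compact. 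The statement that $N$ preserves any $\tau$ from Corollary~\ref{cor:timeIntro} while $L$ acts by translations is then immediate from $N = \ker c$ and the definition of $c$.

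The main obstacle I anticipate is ruling out the dense image and, relatedly, promoting the abstract quotient $G/N \hookrightarrow \bR$ into an honest splitting with a genuine subgroup $L \subseteq G$. Properness is the crucial lever: it guarantees that $G$ is a Lie group with compact-stabilizer action, prevents a recurrent (dense) time-translation from existing, and ensures $N$ is compact. The delicate point is the commutation in the $\bR$-case — showing the translating flow can be chosen to centralize $N^0$ rather than merely normalize it — which I expect to handle via the rigidity of compact group actions (the automorphism group of $N^0$ induced by conjugation is finite on the connected component, so a connected $\bR$ acting by automorphisms must act trivially), yielding the direct product $\Isom^{\uparrow,0}(M,g) = \bR \times N^0$.
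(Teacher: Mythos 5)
Your overall route is essentially the paper's: from the invariant $d\tau$ of Corollary~\ref{cor:timeIntro} you build the homomorphism $c\colon\Isom^\uparrow(M,g)\to(\bR,+)$, use properness to get compactness of $N=\ker c$ (the paper does this in Theorem~\ref{thm:time}(ii) and Lemma~\ref{lem:semidirect}, where the trichotomy trivial/$\bZ$/$\bR$ comes from the infimum $\lambda$ of positive translation amounts, attained or degenerating to a one-parameter subgroup by properness), and then produce a section of $c$ to split off $L$. One local muddle: your exclusion of a dense image ("$c$ of the closure would be a compact dense subgroup of $\bR$, forcing triviality") is not a correct argument as stated; the right use of properness is that the image of $c$ meets every compact interval in a compact set, so a dense image is forced to be all of $\bR$ (closedness of the image), not to be trivial. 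The conclusion and the lever are right, so this part is repairable.

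The genuine gap is in the final claim $\Isom^{\uparrow,0}(M,g)=\bR\times N^0$. You justify the commutation by asserting that the automorphism group of $N^0$ "is finite on the connected component, so a connected $\bR$ acting by automorphisms must act trivially." This is false: for a compact connected Lie group $K$ the identity component of $\Aut(K)$ is $\Inn(K)\cong K/Z(K)$, which is in general neither finite nor trivial. For example, with $K=\mathrm{SO}(3)$, conjugation along any one-parameter subgroup gives a nontrivial $\bR$-action by automorphisms, and the resulting semidirect product $\mathrm{SO}(3)\rtimes\bR$ is isomorphic to $\mathrm{SO}(3)\times\bR$ only after \emph{changing} the complement. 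What is true is that a connected $\bR$ maps trivially into the discrete group $\mathrm{Out}(K)=\Aut(K)/\Inn(K)$, hence acts by \emph{inner} automorphisms; but then one still has to untwist. This untwisting is precisely the content of the paper's Lemma~\ref{lem:direct}: after passing to a covering $\bT^k\times S$ (where $\Aut(\bT^k)$ is discrete and the outer automorphisms of $S$ are discrete), one writes the conjugation action as $\Ad(\exp(sV))$ for some $V$ in the Lie algebra of $N^0$, and replaces the generator $T$ of the $\bR$-factor by $T'=T-V$, so that $\ad(T')=0$ and the new one-parameter subgroup $L'$ genuinely centralizes $N^0$, giving $G^0=N^0\times L'$; one then descends through the covering. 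Your phrase "can be normalized away" gestures at this, but the justification you give cannot deliver it, so the direct-product statement remains unproven in your proposal.
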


Properness of the action (which we recall later on) means essentially the action has a non-chaotic dynamics, so that all orbits are {topologically} closed, and even more, the orbit space is Hausdorff. Corollary~\ref{cor:prodintro} shows that, when the NFOH is satisfied, the splitting of the spacetime into a product of time and space via a Cauchy temporal function (due to Geroch \cite{Ger} and Bernal and S\'anchez \cite{BeSa1}) induces a splitting of the isometry group. The NFOH is essential here; a counterexample is given by de Sitter spacetime, which has compact Cauchy surfaces, but does not satisfy any of our conclusions. See also M\"uller \cite{Mue} for prior work on temporal functions invariant under a Lie group action, where the spacetime is only assumed globally hyperbolic, but the Lie group is assumed compact.

In comparison, in Riemannian signature, the isometry group always acts properly. A largely open question is to classify globally hyperbolic spacetimes having their isometry group acting non-properly. This leads to non-Riemannian behavior and relativistic phenomena, which can already be observed in Minkowski spacetime endowed with the Poincar\'e group action, as well as de Sitter and anti de Sitter spacetimes. There are, however, few works addressing this question, essentially (to our knowledge) due to Monclair on 2-dimensional spatially compact globally hyperbolic spacetimes \cite{Mon1,Mon2}. It is actually in the non-physical case of compact Lorentzian manifolds that the analogous question was intently investigated, e.g.\ by Zimmer \cite{Zim}, Gromov \cite{Gro}, Adams and Stuck \cite{AdSt}, Zeghib \cite{Zeg1,Zeg2}, Piccione and Zeghib \cite{PiZe}, Frances \cite{Fra}... The motivation here comes from the rigidity theory of group actions preserving geometric structures on compact manifolds.

The paper is structured as follows: In Section~\ref{sec:prelim} we discuss preliminaries on Lie group actions, Lorentzian geometry, {and the physical context}. The goal is both to make the paper more accessible, and to prove certain basic results that are not found in the literature in that precise form. Section~\ref{sec:Isomproper} contains the proofs of the results stated in this introduction (restated there with some added detail), and many examples. In Section~\ref{sec:extra}, we have included some additional results and examples. In particular, we prove uniform bi-Lipschitz continuity of certain subgroups of isometries, and give an application to spacetimes equipped with a regular cosmological time function.

\section{Preliminaries} \label{sec:prelim}

\subsection{Lie groups}

We recall some basic facts about Lie groups (see \cite{Knapp} for details). First, we recall the definition of semi-direct product. Let $L, N$ be groups and $\Psi \colon L \to \Aut(N),\ l \mapsto \Psi_l$ a homomorphism from $L$ to the group $\Aut(N)$ of automorphisms of $N$. Then $N \rtimes_\Psi L$ is defined as the Cartesian product $N \times L$ equipped with the operation
\begin{equation*}
 (n,l) \cdot (n',l') = \left(n \Psi_l( n') ,l l'\right).
\end{equation*}
Note that in the statement of Corollary~\ref{cor:prodintro} we have reversed the order of the factors (to better accommodate to the conventions in Lorentzian geometry, where the ``time part'' comes before the ``space part''), and we have omitted the map $\Psi$ (because we are not making any claims about it). The above construction is the ``outer'' semi-direct product. There is also an ``inner'' semi-direct product, defined as a decomposition of a given group $G$ into subgroups, as follows: $N$ is a normal subgroup, and $L$ is a subgroup such that $N \cap L = \{e\}$ and $G = NL$. The latter means that every element $g \in G$ is of the form $g = nl$ for $n \in N$ and $l \in L$ (where $n$ and $l$ are necessarily unique). If we then define
\begin{equation} \label{eq:Psi}
 \Psi \colon L \longrightarrow \Aut(N), \quad l \longmapsto (n \mapsto l n l^{-1}),
\end{equation}
where $l n l^{-1} \in N$ because $N$ was assumed normal, we see that
\begin{equation*}
 nln'l' = n ln'l^{-1} l l' = n \Psi_l( n') l l',
\end{equation*}
implying that $G$ is canonically isomorphic to the outer direct product $N \rtimes_\Psi L$. The map $\Psi$ plays the role of specifying the commutation rules between elements of $N$ and $L$. A direct product is one where $\Psi_l$ is the identity on $N$ for all $l$, i.e., where all elements of $N$ commute with all elements of $L$.

Now let $G$ be a Lie group. Its Lie algebra $\mathfrak{g}$ is the vector space of smooth left-invariant vector fields on $G$, equipped with the usual Lie bracket $[\cdot,\cdot]$. There is a canonical isomorphism of vector spaces $\mathfrak{g} \to T_eG$ between the Lie algebra and the tangent space at the identity $e \in G$. The exponential map $\exp \colon \mathfrak{g} \to G$ sends a vector $X \in \mathfrak{g}$ to $\exp(X) := \varphi(1) \in G$, where $\varphi \colon \bR \to G$ is the unique one-parameter subgroup with $\varphi'(0) = X$. A word of caution is appropriate here: We will also use $\exp$ to denote the exponential map on a semi-Riemannian manifold (defined through geodesics); it should be clear from context which one is meant.

The Lie algebra of a semi-direct product $G = N \rtimes_\Psi L$ is of a specific form, called the semi-direct sum of Lie algebras, denoted by $\mathfrak{n} \oplus_{d \Psi} \mathfrak{l}$. Here $d \Psi \colon \mathfrak{l} \to \Der(\mathfrak{n})$ is the differential of $\Psi \colon L \to \Aut(N)$, but the semi-direct sum can also be considered as an abstract construction. As a vector space, $\mathfrak{n} \oplus_{d \Psi} \mathfrak{l}$ is the direct sum of $\mathfrak{n}$ and $\mathfrak{l}$. It is equipped with the bracket that restricts to the respective brackets on $\mathfrak{n}$ and $\mathfrak{l}$, and satisfies
\begin{equation*}
 [X,Y] = d \Psi(X)(Y) \text{ for } X \in \mathfrak{l}, \ Y \in \mathfrak{n}.
\end{equation*}
A somewhat related concept is the adjoint representation $\Ad \colon G \to \GL(\mathfrak{g})$, which assigns to $g \in G$ the differential $\Ad(g) := d \psi_g \colon \mathfrak{g} \to \mathfrak{g}$ of the map
\begin{equation*}
 \psi_g \colon G \longrightarrow G, \quad x \longmapsto g x g^{-1},
\end{equation*}
cf.\ \eqref{eq:Psi}. It then holds that
\begin{equation*}
 \exp(\Ad(g)X) = g \exp(X) g^{-1}.
\end{equation*}

\subsection{Group actions}

An action of a group $G$ on a space $M$ is a map $G \times M \to M,\ (g,p) \mapsto g(p)$ such that for all $p \in M$, $g(h(p)) = (gh)(p)$ for all $g,h \in G$ and $e(p) = p$ for $e \in G$ the identity element. Let us start with a standard proposition.

\begin{prop}[{\cite[Prop.~21.5]{Lee}}] \label{prop:properactions}
 Let $M$ be a smooth manifold and $G$ a Lie group acting continuously on $M$. The following are equivalent:
 \begin{enumerate}
  \item The action is proper, meaning that $G \times M \to M \times M, (g,p) \mapsto (g(p),p)$ is a proper map.
  \item For every pair of sequences $(p_i)_i$ in $M$ and $(g_i)_i$ in $G$ such that $(p_i)_i$ and $(g_i(p_i))_i$ converge in $M$, $(g_i)_i$ has a convergent subsequence in $G$.
  \item For every compact set $K \subseteq M$, the set $G_K := \{ g \in G \mid K \cap g(K) \neq \emptyset \}$ is compact.
 \end{enumerate}

\end{prop}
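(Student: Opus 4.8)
The plan is to prove all three equivalences by arguing sequentially throughout, which is legitimate because smooth manifolds and Lie groups are (by the standing conventions) Hausdorff and second countable, hence metrizable; in a metrizable space compactness coincides with sequential compactness. The one tool I would isolate first is the sequential criterion for properness: a continuous map $f \colon X \to Y$ between metrizable spaces is proper if and only if it is \emph{sequentially proper}, i.e.\ whenever $(f(x_i))_i$ converges in $Y$ the sequence $(x_i)_i$ admits a convergent subsequence. Both directions are short. If $f$ is proper and $f(x_i) \to y$, then $\{f(x_i)\}_i \cup \{y\}$ is compact, so its preimage is compact (hence sequentially compact) and contains every $x_i$, giving a convergent subsequence. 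Conversely, to see that $f^{-1}(K)$ is compact for $K$ compact, take a sequence in it, extract a convergent subsequence of images inside the compact (metrizable) set $K$, apply sequential properness to get a convergent subsequence in $X$, and use continuity of $f$ to place the limit back in $f^{-1}(K)$.

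With this criterion in hand, the equivalence of \emph{(i)} and \emph{(ii)} is essentially bookkeeping. I would apply the criterion to the map $\Theta \colon G \times M \to M \times M$, $(g,p) \mapsto (g(p),p)$. Convergence of $\Theta(g_i,p_i) = (g_i(p_i),p_i)$ in $M \times M$ is precisely simultaneous convergence of $(p_i)_i$ and $(g_i(p_i))_i$ in $M$; and since $(p_i)_i$ already converges, the pair $(g_i,p_i)$ has a convergent subsequence in $G \times M$ if and only if $(g_i)_i$ alone does. Unwinding these two observations turns sequential properness of $\Theta$ into exactly the statement of \emph{(ii)}.

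The implication \emph{(iii)} $\Rightarrow$ \emph{(ii)} is then direct: given converging sequences $p_i \to p$ and $g_i(p_i) \to q$, the set $K := \{p_i\}_i \cup \{p\} \cup \{g_i(p_i)\}_i \cup \{q\}$ is compact, and by construction $p_i \in K$ together with $g_i(p_i) \in K \cap g_i(K)$, so every $g_i$ lies in the compact set $G_K$; compactness of $G_K$ then forces $(g_i)_i$ to subconverge.

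For the converse \emph{(ii)} $\Rightarrow$ \emph{(iii)}, I would fix a compact set $K$ and take an arbitrary sequence $(g_i)_i$ in $G_K$, choosing witnesses $p_i \in K$ with $g_i(p_i) \in K$. Applying sequential compactness of $K$ twice, pass to a subsequence along which $p_i \to p \in K$ and $g_i(p_i) \to q \in K$; then \emph{(ii)} supplies a further subsequence with $g_i \to g$ in $G$. Here lies the only point that needs genuine care, and which I expect to be the main (if modest) obstacle: one must verify that the limit $g$ actually belongs to $G_K$, so that $G_K$ is not merely sequentially precompact but closed, hence compact. This is where joint continuity of the action enters: $g_i(p_i) \to g(p)$, which combined with $g_i(p_i) \to q$ yields $g(p) = q \in K$, whence $g(p) \in K \cap g(K)$ and therefore $g \in G_K$. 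Thus every sequence in $G_K$ has a subsequence converging within $G_K$, so $G_K$ is sequentially compact and hence compact, closing the cycle of implications.
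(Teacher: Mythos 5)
Your proof is correct and complete. Note, however, that the paper does not prove this proposition at all: it is quoted verbatim from Lee's textbook (Prop.~21.5 of \cite{Lee}) and used as a black box, so there is no internal argument to compare against. Your route is the standard one and is executed carefully: you first establish the sequential characterization of properness for continuous maps between metrizable spaces (legitimate here, since manifolds and Lie groups are second countable, locally compact and Hausdorff, hence metrizable, and likewise their products), then observe that sequential properness of $(g,p) \mapsto (g(p),p)$ is a literal restatement of \textit{(ii)}, and finally close the cycle through \textit{(iii)} by the witness-and-subsequence argument. The two points that are most often botched in blind attempts are both handled correctly: in \textit{(iii)} $\Rightarrow$ \textit{(ii)} you take $K$ to be the union of the two convergent sequences \emph{together with their limits} (otherwise $K$ is not compact), and in \textit{(ii)} $\Rightarrow$ \textit{(iii)} you verify that the subsequential limit $g$ actually lies in $G_K$, using joint continuity of the action to get $g_i(p_i) \to g(p)$, so that $G_K$ is genuinely sequentially compact rather than merely precompact. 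The only stylistic remark is that your argument nowhere uses smoothness of $M$ or the Lie group structure of $G$; metrizability (or even just second countability plus local compactness) of both suffices, so your proof is in fact slightly more general than the statement as given.
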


For the rest of this subsection, let $M$ be a smooth manifold, and let $g$ be a semi-Riemannian metric on $M$ of signature $(k,l)$. The orthonormal frame bundle $O(M)$ consists of all pairs of a point $p \in M$ and an ordered $g$-orthonormal basis of $T_pM$ (a \emph{frame}). One can also view a frame at the point $p \in M$ as a linear isometry $F_p \colon \bR^{k,l} \to (T_pM,g_p)$, which necessarily maps the standard basis to an orthonormal basis. This way, one has a canonical action of the indefinite orthogonal group $\OG(k,l)$ on $O(M)$ by mapping $F_p$ to $F_p \circ \phi$ for $\phi \in \OG(k,l)$, equipping $O(M)$ with the structure of an $\OG(k,l)$-principal bundle. In this section, we fix
\begin{equation*}
 n := \dim(M) \quad \text{and} \quad \bar n := \dim(\OG(k,l)) = \frac{n(n-1)}{2},
\end{equation*}
and therefore $\dim(O(M)) = n + \bar{n}$.

{Theorem \ref{Thm:OM} below, telling us that the isometry group acts properly on $O(M)$, will be a central tool in many of our proofs. We are not aware of a reference where it appears in this exact form, but similar ideas can be found in the book of Kobayashi \cite[Chap.~II.1]{KobBook} and, with more details, in the lecture notes by Ballmann \cite{BallNotes}. In those references, it is proven that the isometry group of a (semi-)Riemannian manifold is a Lie group. Before we start, we need the following well-known lemma.}

\begin{lem} \label{lem:parallel}
 The orthonormal frame bundle admits a parallelization, that is, a diffeomorphism
 \begin{equation*}
  \Phi \colon O(M) \times \bR^{\bar n + n} \longrightarrow TO(M)
 \end{equation*}
 such that $\Phi(F_p,\cdot) \colon \bR^{\bar n + n} \longrightarrow T_{F_p}O(M)$ is an isomorphism of vector spaces. Moreover, $\Phi$ can be chosen invariant under isometries of $(M,g)$, meaning that if $\psi \in \Isom(M,g)$ then
 \begin{equation} \label{eq:presPhi}
  \Phi(D_p\psi(F_p),z) = D\psi ( \Phi(F_p,z)) \quad \text{for all } z \in \bR^{\bar n + n}.
 \end{equation}
\end{lem}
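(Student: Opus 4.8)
The plan is to construct the parallelization using the two canonical differential-geometric objects that live on the orthonormal frame bundle: the \emph{solder form} (tautological 1-form) and the \emph{Levi-Civita connection form}. Together these give a canonical trivialization of $TO(M)$, and because they are built from the metric alone, they are automatically invariant under isometries. I would set this up explicitly as follows.

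\begin{proof}[Proof sketch]
Recall that $O(M)$ is the $\OG(k,l)$-principal bundle of orthonormal frames, with projection $\pi \colon O(M) \to M$. On it there are two canonical forms. The \emph{solder form} $\theta$ is the $\bR^n$-valued 1-form defined at a frame $F_p$ by
\begin{equation*}
 \theta_{F_p}(\xi) = F_p^{-1}\bigl(D\pi(\xi)\bigr) \in \bR^n \cong \bR^{k,l},
\end{equation*}
where we use that the frame $F_p \colon \bR^{k,l} \to T_pM$ is an isomorphism; this records the ``horizontal'' data of a tangent vector $\xi \in T_{F_p}O(M)$ in the coordinates of the frame itself. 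The Levi-Civita connection determines a \emph{connection 1-form} $\omega$, an $\mathfrak{o}(k,l)$-valued 1-form on $O(M)$, whose kernel is the horizontal distribution and which reproduces the generators of the $\OG(k,l)$-action on the vertical subspace. Since $\dim \mathfrak{o}(k,l) = \bar n$, the combined form
\begin{equation*}
 (\theta, \omega) \colon T_{F_p}O(M) \longrightarrow \bR^n \oplus \mathfrak{o}(k,l) \cong \bR^{n + \bar n}
\end{equation*}
is a linear isomorphism at every point: $\theta$ is injective on the horizontal subspace (an $n$-dimensional complement) and vanishes on the vertical, while $\omega$ is an isomorphism on the vertical $\bar n$-dimensional subspace and plays no role in detecting the horizontal part after we use $\theta$. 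Fixing a linear identification $\bR^n \oplus \mathfrak{o}(k,l) \cong \bR^{\bar n + n}$, I define $\Phi(F_p, \cdot)$ to be the inverse of $(\theta, \omega)_{F_p}$. Smoothness and fibrewise linearity are immediate from the smoothness of $\theta$ and $\omega$.

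It remains to verify the isometry-invariance \eqref{eq:presPhi}. An isometry $\psi \in \Isom(M,g)$ induces a bundle automorphism $D\psi \colon O(M) \to O(M)$, $F_p \mapsto D_p\psi \circ F_p$, which is $\OG(k,l)$-equivariant and covers $\psi$. The key point is that both canonical forms are natural, i.e.\ pulled back to themselves: one has $(D\psi)^* \theta = \theta$ because $\psi$ commutes with $\pi$ and the frame transformation cancels the differential of $\psi$ in the definition of $\theta$; and $(D\psi)^*\omega = \omega$ because an isometry maps the Levi-Civita connection to itself, hence preserves its horizontal distribution and the associated connection form. Equivalently, $(\theta, \omega)$ is a natural object attached to the metric. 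Translating these pullback identities through the definition $\Phi = (\theta,\omega)^{-1}$ yields exactly \eqref{eq:presPhi}.

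The main obstacle, and the part deserving care, is the invariance of the connection form $\omega$ under $D\psi$. Invariance of the solder form $\theta$ is essentially formal, following from $\pi \circ D\psi = \psi \circ \pi$ and the definition. For $\omega$, I would argue that since $\psi$ is an isometry it sends $g$-geodesics to $g$-geodesics and preserves parallel transport; consequently the horizontal lift of a curve in $M$ is carried by $D\psi$ to a horizontal lift, so $D\psi$ preserves the horizontal distribution $\ker\omega$. Combined with $\OG(k,l)$-equivariance of $D\psi$ (which fixes the behaviour on vertical vectors), this forces $(D\psi)^*\omega = \omega$. One may alternatively invoke the naturality of the Levi-Civita connection under isometries directly. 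Either way, the argument is standard and can be found in \cite[Chap.~II.1]{KobBook} or \cite{BallNotes}.
\end{proof}
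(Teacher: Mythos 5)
Your proof is correct and is essentially the paper's own argument in dual form: the pair $(\theta,\omega)$ of solder form and Levi-Civita connection form is precisely the pointwise inverse of the map $\Phi = \Phi_V \oplus \Phi_H$ that the paper builds via fundamental vector fields on the fibres and horizontal lifts (parallel frames), and both invariance arguments rest on the same fact that isometries preserve the bundle structure and the Levi-Civita connection. No gap to report.
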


Here $D_p \psi$ denotes the differential of $\psi$ at the point $p \in M$, and $D \psi$ the differential as a map on the tangent bundle $TM$. Note that $F_p \mapsto \Phi(F_p,z)$ defines a vector field on $O(M)$. Such vector fields are called \emph{constant}. The automorphism group $\Aut(\Phi)$ is defined as the group of diffeomorphisms of $M$ satisfying \eqref{eq:presPhi}. Hence the last statement of the lemma is equivalent to stating that $\Isom(M,g)$ is a subgroup of $\Aut(\Phi)$.

\begin{proof}
 We follow \cite[Sec.~3]{BallNotes}. Let $\pi \colon O(M) \to M$ denote the canonical projection. Our first goal is to split the tangent bundle $TO(M)$ into a vertical and a horizontal part, $TO(M) = V \oplus H$. Here $V = \ker D\pi$, and we want $D\pi \colon H_{F_p} \to T_pM$ to be an isomorphism of vector spaces. While $V$ is uniquely defined, $H$ is not, and our task is to find a such an $H$ that is compatible with our geometric structure (i.e.\ with the metric $g$).

 For $F_p \in O(M)$ and $z \in \bR^n$, we set $v := F_p(z) \in T_pM$. Choose a curve $c \colon (-\epsilon,\epsilon) \to M$ with $c(0) = p$ and $c' (0) = v$. Let $E \colon (-\epsilon,\epsilon) \to O(M)$ be the unique parallel (with respect to the Levi-Civita connection on $(M,g)$) frame along $c$ such that $E(0) = F_p$. We claim that the map
 \begin{equation*}
  \Phi_H \colon O(M) \times \bR^n \longrightarrow TO(M), \qquad (F_p,z) \longmapsto E'(0).
 \end{equation*}
 is well-defined (i.e.\ independent of the choice of $c$), smooth, and linear in $z$. This can be verified by choosing coordinates around $p$ such that the Christoffel symbols vanish at $p$, and extending them to a local trivialization of $O(M)$. In such coordinates, $E'(0) = (v^1,...,v^{p+q},0,...,0)$.

 The image $H$ of $\Phi_H$ defines the desired horizontal distribution. This is so, because $D\pi(\phi(F_p,z)) = F_p(z)$, and hence $\Phi_H(F_p, \cdot) \colon \bR^n \to T_{F_p} O(M)$ is an injection. Moreover, when restricting the target space to $H$, we obtain a trivialization
 \begin{equation*}
  \Phi_H \colon O(M) \times \bR^n \longrightarrow H,
 \end{equation*}
 which we denote again by $\Phi_H$. The action of $O(k,l)$ on $O(M)$ also provides a trivialization for the vertical distribution $V$, namely
 \begin{equation*}
  \Phi_V \colon O(M) \times \mathfrak{o}(k,l) \longrightarrow V, \qquad (F_p,x) \longmapsto \partial_t \vert_{t=0} (F_p \circ \exp(t x)).
 \end{equation*}
 After choosing an identification of the Lie algebra $\mathfrak{o}(k,l)$ with $\bR^{\bar{n}}$, we obtain the desired parallelization by setting
 \begin{equation*}
  \Phi \colon O(M) \times \bR^{\bar{n}+n} \longrightarrow TO(M), \quad (F_p,(x,z)) \longmapsto \Phi_V(F_p,x) + \Phi_H(F_p,z).
 \end{equation*}
 By construction, $\Phi$ is invariant (i.e.\ satisfies \eqref{eq:presPhi}) under $D\psi \colon O(M) \to O(M)$ for any isometry $\psi \in \Isom(M,g)$ (since $D\psi$ preserves the bundle structure of $O(M)$ and the Levi-Civita connection).
\end{proof}

\begin{thm} \label{Thm:OM}
 The isometry group $\Isom(M,g)$ of a semi-Riemannian manifold $(M,g)$ acts freely and properly on the orthonormal frame bundle $O(M)$.
\end{thm}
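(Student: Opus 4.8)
The plan is to treat the two assertions separately: the classical fact that an isometry is determined by its $1$-jet at a point gives freeness, while Lemma~\ref{lem:parallel} is the main tool for properness. Throughout I write $D\psi \colon O(M) \to O(M)$ for the lift of an isometry $\psi$ to the frame bundle, a bundle automorphism covering $\psi$ that lies in $\Aut(\Phi)$ by the lemma, and I test properness through criterion \textit{(ii)} of Proposition~\ref{prop:properactions}. For freeness, suppose $\psi \in \Isom(M,g)$ fixes a frame, $D\psi(F_p) = F_p$. Then $\psi(p) = p$ and $D_p\psi$ fixes the basis $F_p$, so $D_p\psi = \mathrm{id}_{T_pM}$. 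Since $\psi$ maps geodesics to geodesics, $\psi \circ \exp_p = \exp_{\psi(p)} \circ\, D_p\psi = \exp_p$ on a normal neighborhood, whence $\psi = \mathrm{id}$ there. The set of $q$ with $\psi(q)=q$ and $D_q\psi = \mathrm{id}$ is thus open, and it is obviously closed and nonempty; as $M$ is connected it is all of $M$, so $\psi = \mathrm{id}$.

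For properness I first record the key consequence of preserving $\Phi$. For $z \in \bR^{\bar n + n}$ let $X_z$ be the constant vector field $F_q \mapsto \Phi(F_q,z)$, with locally defined flow $\mathrm{Fl}^z_t$. Any $\Theta \in \Aut(\Phi)$ carries $X_z$ to itself and hence intertwines the flows: wherever one side is defined, $\Theta(\mathrm{Fl}^z_t(q)) = \mathrm{Fl}^z_t(\Theta(q))$, and as $\Theta$ is a global diffeomorphism it preserves the domains of these flows. Fixing a basis $(e_j)$ of $\bR^{\bar n + n}$, the $X_{e_j}$ form a frame at every point, so the map $t \mapsto \mathrm{Fl}^{e_m}_{t_m}\circ\cdots\circ\mathrm{Fl}^{e_1}_{t_1}(q_0)$ is a diffeomorphism from a neighborhood of $0$ onto a neighborhood of $q_0$; I call such a composition a \emph{flow-word} based at $q_0$. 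Now let $(F_i)$ and $(\psi_i)$ satisfy $F_i \to F$ and $\Psi_i(F_i) \to F'$, where $\Psi_i := D\psi_i$. For large $i$, write $F_i = w_i(F)$ with flow-words $w_i$ of parameters $t^i \to 0$; the intertwining property gives $\Psi_i(F) = w_i^{-1}(\Psi_i(F_i))$, and since $t^i \to 0$ while $\Psi_i(F_i) \to F'$, continuity of the flows yields $\Psi_i(F) \to F'$. Thus I may assume convergence at the fixed point $F$.

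I then define a candidate limit $\Psi$ on the connected component $C$ of $F$ by $\Psi(q) := w(F')$ whenever $q = w(F)$ for a flow-word $w$; this value is forced, as $\Psi_i(q) = w(\Psi_i(F)) \to w(F')$. It is independent of the word, since $w(F) = w'(F)$ implies $w(\Psi_i(F)) = \Psi_i(w(F)) = \Psi_i(w'(F)) = w'(\Psi_i(F))$ for all $i$, and $i \to \infty$ gives $w(F') = w'(F')$. The parallelization controls derivatives, $D_q\Psi_i = \Phi(\Psi_i(q),\cdot)\circ \Phi(q,\cdot)^{-1} \to \Phi(\Psi(q),\cdot)\circ\Phi(q,\cdot)^{-1}$, so $\Psi_i \to \Psi$ in $C^\infty_{\mathrm{loc}}$ on $C$ and $\Psi$ preserves $\Phi$. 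Running the same argument for $\psi_i^{-1}$ produces an inverse, making $\Psi$ a diffeomorphism of $C$ onto the component of $F'$.

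Finally I descend $\Psi$ to $M$. A clopen argument in local trivializations shows $\pi(C) = M$: the set $\pi(C)$ is open, and over a connected chart the clopen set $C$ is a product $U \times S$ with $S \subseteq \OG(k,l)$ clopen, so $\pi(C)$ is also closed, hence all of $M$ by connectedness. Since the $\Psi_i$ commute with the $\OG(k,l)$-action, so does $\Psi$, and it descends to a smooth $\psi \colon M \to M$ with $\psi_i \to \psi$ in $C^\infty_{\mathrm{loc}}$; the corresponding statement for $\psi_i^{-1}$ makes $\psi$ a diffeomorphism, and $\psi^*g = \lim \psi_i^* g = g$ makes it an isometry, so $\psi \in \Isom(M,g)$ and criterion \textit{(ii)} holds. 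I expect the main obstacle to be exactly this global step: upgrading convergence at a single frame to $C^\infty_{\mathrm{loc}}$ convergence on all of $O(M)$ and then to an isometry of $M$. The delicate points are the well-definedness of $\Psi$ independent of the flow-word and the fact that the constant vector fields need not be complete — which is why I work with concatenations of locally defined flows and exploit that each $\Psi_i$ is a genuine global diffeomorphism preserving flow domains, rather than presuming a global flow.
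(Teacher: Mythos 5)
Your freeness argument is correct and self-contained (the paper instead obtains freeness from a cited result on $\Aut(\Phi)$), and your overall plan for properness --- transfer convergence to a fixed frame, then reconstruct the limit automorphism from the flows of the constant vector fields --- is not the paper's argument but rather an attempt to reprove, from scratch, the result the paper cites: the paper invokes \cite{BallNotes} (Prop.~1.5) for properness of the orbit map $\Theta_F$ of $\Aut(\Phi)$, builds the $\Phi$-invariant Riemannian metric $h$, and then handles the moving basepoints $F_i$ by a short chain of $h$-balls, after which compactness of $\Theta_F^{-1}(B_{4r}(\tilde F))$ finishes the proof.

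There is, however, a genuine gap at the central step of your reconstruction. When you define $\Psi(q) := w(F')$ for $q = w(F)$, and when you assert $\Psi_i(q) = w(\Psi_i(F)) \to w(F')$, you presuppose that the flow-word $w$ is \emph{defined at} $F'$. Domain preservation by the global diffeomorphisms $\Psi_i$ gives you exactly that $w$ is defined at each point $\Psi_i(F)$ --- but the domain of the flow of an incomplete vector field is open, not closed, so definedness at the points $\Psi_i(F)$ does not pass to their limit $F'$. (Compare $X = \partial_x$ on $\bR^2 \setminus \{(1,0)\}$: the time-$1$ flow is defined at every $(0,1/i)$ but not at the limit point $(0,0)$.) This is precisely the difficulty caused by non-completeness that you flag at the end, and your stated workaround (each $\Psi_i$ preserves flow domains) only controls definedness along the orbit of $F$, not at the limit point. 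The same gap recurs in your well-definedness check (the identity $w(F') = w'(F')$ presupposes both sides exist) and in the claim that every $q$ in the component $C$ is reachable by a word that can be evaluated at $F'$. The standard repair --- which is essentially the content of the cited \cite{BallNotes}, Prop.~1.5 --- is an open-and-closed argument for the set of points where $(\Psi_i)_i$ subconverges: openness uses lower semicontinuity of escape times (all flow-words with parameters below some $\epsilon$ are defined on a fixed neighborhood of $F'$, hence at $\Psi_i(F)$ for large $i$, hence, by domain preservation applied to $\Psi_i^{-1}$, at $F$ itself, after which your intertwining computation does converge); closedness uses the invariant metric $h$ (the $\Psi_i$ are $h$-isometries, hence equicontinuous) together with local compactness of $O(M)$. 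Without this ingredient your map $\Psi$ is not defined on all of $C$; with it, your remaining steps (the identity $D_q\Psi_i = \Phi(\Psi_i(q),\cdot)\circ\Phi(q,\cdot)^{-1}$ upgrading pointwise to $C^\infty_{\mathrm{loc}}$ convergence, descent to $M$, and $\psi^* g = \lim \psi_i^* g = g$) are sound.
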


\begin{proof}
 Let $\Phi$ be the parallelization of $O(M)$ obtained in Lemma \ref{lem:parallel}. By \cite[Prop.~1.5]{BallNotes}, the action of the automorphism group $\Aut(\Phi)$ is free, and the orbit map of any given $F_p \in O(M)$,
 \begin{equation*}
  \Theta_{F_p} \colon \Aut(\Phi) \longrightarrow O(M), \qquad \psi \longmapsto D_p\psi(F_p),
 \end{equation*}
 is a proper map. We want to prove that, in fact, the action of $\Aut(\Phi)$ is proper. Then also the action of $\Isom(M,g)$ is proper, because $\Isom(M,g)$ is a closed subgroup of $\Aut(\Phi)$. The latter is true if $\psi_i \to \psi$ in the $C^1$-topology, then $D \psi_i \to D \psi$, and thus $\psi_i^* g = g$ implies $\psi^* g = g$. In words, the limit of a sequence of isometries is again an isometry. Note that the $C^0$ and $C^1$ topologies on $\Isom(M,g)$ coincide, because the exponential map is a local diffeomorphism, and hence convergence of $\psi_i$ on a normal neighborhood of a point $p \in M$ is equivalent to convergence of $D_p \psi_i$ on $T_pM$.

 Notice that the parallelization $\Phi$ allows us to define an invariant Riemannian metric $h$ on $O(M)$ as follows. Let $(e_i)_i$ be the standard basis of $\bR^{\bar n + n}$. We declare that $(\Psi(F_p,e_i))_i$ is an orthonormal basis for $h$ at $F_p \in O(M)$. By \eqref{eq:presPhi}, $h$ is invariant under $\Aut(\Phi)$.

 The rest of the proof consists in arguing that an action that preserves a Riemannian metric and has proper orbit maps is proper. Suppose that $F_i \in O(M)$, $\phi_i \in \Aut(\Phi)$ are sequences such that $F_i \to F$ and $\phi_i (F_i) \to \tilde F$ converge in $O(M)$. We need to show that $\phi_i$ has a convergent subsequence in $\Aut(\Phi)$ (see Proposition \ref{prop:properactions}). Choose $r>0$ be small enough so that the $h$-balls $B_r(F)$ and $B_{4r}(\tilde F)$ are compact. Let $i_0 > 0$ be large enough such that $d(F_i,F) < r$ and $d(\phi_i(F_i),\tilde F) < r$ for all $i \geq i_0$, where $d$ is the distance induced by $h$. Then
 \begin{equation*}
  \phi_i(F) \in \phi_i(B_r(F)) \subseteq \phi_i(B_{2r}(F_i)) = B_{2r}(\phi_i (F_i)) \subseteq B_{4r}(\tilde F).
 \end{equation*}
 Hence the sequence $\phi_i$ is contained in $\Theta_{F}^{-1}(B_{4r}(\tilde F))$, and the latter set is compact by properness of the orbit map $\Theta_F$. It follows that $\phi_i$ has a convergent subsequence, and hence the action is proper.
\end{proof}

\subsection{Lorentzian geometry}

We start with a brief summary of definitions relevant to this paper; a more complete introduction can be found in books such as \cite{BEE,ONeill}.

A (smooth) \emph{spacetime} is a smooth connected Lorentzian manifold $(M,g)$ together with a \emph{time orientation}, i.e.\ a smooth vector field $T$ on $M$ such that $g(T,T) < 0$ everywhere (i.e.\ $T$ is everywhere timelike). Vectors $X \in TM$ that are timelike or causal (meaning $g(X,X) < 0$ or $g(X,X) \leq 0$, respectively) are then called \emph{future-directed} (f.d.) if $g(X,T) < 0$. This then gives rise to notions of f.d.\ timelike (or causal) curve $\gamma \colon I \to M$ by requiring that $\gamma$ is Lipschitz and its tangent vector $\dot\gamma$ is f.d.\ timelike (or causal) almost everywhere. This, in turn, defines the chronological and causal relations $\ll$ and $\leq$ on $M$ via: $p \ll q$ if there is a f.d.\ timelike curve from $p$ to $q$, and $p \leq q$ if there is a f.d.\ causal curve from $p$ to $q$ or $p=q$. We say that $(M,g)$ is causal if $\leq$ is antisymmetric. It is a standard fact that $p \ll q$ is an open condition in $M \times M$. We denote the timelike and causal futures and pasts of a point $p \in M$ by
\begin{align*}
 I^+(p) &= \{q \in M \mid p \ll q\},&  J^+(p) &= \{q \in M \mid p \leq q\},&\\
 I^-(p) &= \{q \in M \mid p \gg q\},&  J^-(p) &= \{q \in M \mid p \geq q\}.
\end{align*}
For subsets $A \in M$, we set $I^+(A) := \bigcup_{a \in A} I^+(a)$, etc. For curves $\gamma \colon I \to M$, we denote $I^+(\gamma) := I^+(\gamma(I))$.

\begin{defn}
 Let $(M,g)$ be a smooth spacetime.
 \begin{enumerate}
  \item A (smooth, spacelike) \emph{Cauchy surface} is a smooth hypersurface $\Sigma \subset M$ such that $g \vert_{T\Sigma}$ is positive definite and every inextendible causal curve in $(M,g)$ intersects $\Sigma$ exactly once.
  \item A \emph{time function} $\tau \colon M \to \bR$ is a continuous function that is strictly increasing along all f.d.\ causal curves.
  \item A \emph{temporal function} $\tau \colon M \to \bR$ is a smooth function such that $d\tau(X) > 0$ for every f.d.\ causal $X \in TM$. It is called \emph{Cauchy temporal function} if the level set $\tau^{-1}(t)$ is a Cauchy surface, for every $t \in \bR$.
 \end{enumerate}
\end{defn}

Notice that temporal functions form a special class of time functions. The following theorem is central. It summarizes results of Geroch \cite{Ger} and Bernal and S\'anchez \cite{BeSa1,BeSa2}.

\begin{thm} \label{thm:split}
 Let $(M,g)$ be a spacetime. The following are equivalent:
 \begin{enumerate}
  \item $(M,g)$ is globally hyperbolic, meaning that $\leq$ is antisymmetric and for all $p,q \in M$, $J^-(p) \cap J^+(q)$ is compact.
  \item There is a Cauchy surface $\Sigma \subset M$.
  \item There is a Cauchy temporal function $\tau \colon M \to \bR$.
 \end{enumerate}
 Moreover, any two Cauchy surfaces are diffeomorphic, and every Cauchy temporal function $\tau$ gives rise to a diffeomorphism $M \to \bR \times \Sigma$ such that $\tau$ coincides with the {first} projection.
\end{thm}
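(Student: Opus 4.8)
The plan is to prove the cyclic chain of implications (iii) $\Rightarrow$ (ii) $\Rightarrow$ (i) $\Rightarrow$ (iii), and then to deduce the ``moreover'' part from the flow induced by a Cauchy temporal function. The implication (iii) $\Rightarrow$ (ii) is immediate: by the very definition of a Cauchy temporal function, each level set $\tau^{-1}(t)$ is a smooth spacelike Cauchy surface, so one may simply take $\Sigma = \tau^{-1}(0)$.

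For (ii) $\Rightarrow$ (i) I would follow Geroch's original argument \cite{Ger}. Antisymmetry of $\leq$ follows by contradiction: a violation produces a closed causal curve, and traversing it repeatedly yields an inextendible causal curve that meets $\Sigma$ either never or infinitely often, contradicting the Cauchy property that it meet $\Sigma$ exactly once. The compactness of the diamonds $J^-(p) \cap J^+(q)$ is the substantive point. Given a sequence $(x_i)$ in the diamond, I would join $q$ to $x_i$ to $p$ by causal curves, extend each to an inextendible causal curve, and apply a limit curve theorem to extract a limiting inextendible causal curve. Since this limit curve must cross $\Sigma$ exactly once, one controls the possible escape of the $x_i$ and extracts a subsequence converging inside the diamond; closedness of the diamond is then automatic from the stability of $\leq$ under limits of causal curves.

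The implication (i) $\Rightarrow$ (iii) is the deep analytic core, and I expect it to be the main obstacle. Geroch \cite{Ger} constructs a \emph{continuous} Cauchy time function from the volumes of causal pasts and futures: fixing a finite measure $\mu$ on $M$ and setting $\tau = \log(\mu(J^-(\cdot))/\mu(J^+(\cdot)))$ produces a time function whose level sets are topological Cauchy surfaces. The difficulty is that this function is merely continuous and its level sets merely Lipschitz and acausal, whereas (iii) demands a \emph{smooth} function with everywhere timelike gradient and \emph{smooth spacelike} Cauchy level sets. Upgrading to this regularity is precisely the content of Bernal and S\'anchez \cite{BeSa1,BeSa2}: the key technical ingredients are the construction of temporal functions with timelike gradient via careful local smoothing and partition-of-unity arguments, together with the verification that the resulting smooth level sets remain Cauchy. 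I would invoke their result rather than reprove it.

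Finally, for the ``moreover'' part, given a Cauchy temporal function $\tau$ its gradient $\nabla\tau$ is everywhere timelike (a standard consequence of $d\tau$ being positive on the causal cone), and I would normalize it to the future-directed timelike field $V = \nabla\tau / g(\nabla\tau,\nabla\tau)$, which satisfies $d\tau(V) = 1$. Each integral curve of $V$ is timelike and, since $\tau$ increases at unit rate along it, attains every value of $\tau \in \bR$; hence it crosses each Cauchy level set exactly once. Sending a point $x$ to the pair consisting of its value $\tau(x)$ and the intersection of its $V$-integral curve with $\Sigma = \tau^{-1}(0)$ then defines a diffeomorphism $M \to \bR \times \Sigma$ under which $\tau$ corresponds to the first projection. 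The statement that any two Cauchy surfaces are diffeomorphic follows, since any Cauchy surface is transverse to $V$ and is carried diffeomorphically onto $\Sigma$ by the flow.
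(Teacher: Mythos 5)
The paper offers no proof of this theorem at all: it is stated explicitly as a summary of known results of Geroch \cite{Ger} and Bernal--S\'anchez \cite{BeSa1,BeSa2}, and your outline follows exactly that route --- the easy implication \textit{(iii)} $\Rightarrow$ \textit{(ii)}, Geroch's argument for \textit{(ii)} $\Rightarrow$ \textit{(i)}, the deep implication \textit{(i)} $\Rightarrow$ \textit{(iii)} correctly invoked from Bernal--S\'anchez rather than reproven, and the splitting via the flow of the normalized gradient $V = \nabla\tau/g(\nabla\tau,\nabla\tau)$ (which is indeed future-directed, since $\nabla\tau$ is past-directed and $g(\nabla\tau,\nabla\tau)<0$). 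One small wrinkle in your ``moreover'' step: the fact that $\tau$ increases at unit rate along an integral curve of $V$ does not by itself show that every real value is attained; you should first note that maximal integral curves of $V$ are inextendible causal curves (by the ODE escape lemma, a maximal integral curve with finite existence time cannot stay in a compact set, hence has no endpoint), and only then does the Cauchy property of the level sets force $\tau$ to attain every value along the curve, which in turn gives completeness of the flow and the product structure $M \cong \bR \times \Sigma$.
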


Thanks to the above, a spacetime having compact Cauchy surfaces is a well-defined notion, which we indirectly assume throughout the paper, as a consequence of our main hypothesis below.

\begin{defn}
 A spacetime $(M,g)$ is said to satisfy:
 \begin{enumerate}
  \item The \emph{No Future Observer Horizons condition (NFOH)} if $I^-(\gamma) = M$ for every future-inextendible causal curve $\gamma$.
  \item The \emph{No Past Observer Horizons condition (NPOH)} if $I^+(\gamma) = M$ for every past-inextendible causal curve $\gamma$.
  \item The \emph{No Observer Horizons condition (NOH)} if it satisfies NFOH and NPOH.
 \end{enumerate}
\end{defn}

It is equivalent to require the same conditions only on timelike curves {\cite[Prop.~3.32]{FHS}}. Moreover, the NFOH (NPOH) is equivalent to the future (past) causal boundary consisting of a single point. We formulate most results for the NFOH, but there is always an analogous version for the NPOH. We first prove that the NFOH upgrades one of the lowest levels on the causal ladder, causality, to the highest level, global hyperbolicity (see also \cite[p.~624]{Pen} and \cite[Sec.~9]{Flo}).

\begin{prop} \label{prop:NFOHCauchy}
 Let $(M,g)$ be a causal spacetime satisfying the NFOH. Then $(M,g)$ is globally hyperbolic with compact Cauchy surfaces.
\end{prop}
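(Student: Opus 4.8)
The plan is to establish the two claims separately. Compactness of all causal diamonds $J^+(p)\cap J^-(q)$, together with the assumed antisymmetry of $\leq$, is by definition global hyperbolicity in the sense of Theorem~\ref{thm:split}(i); the remaining point is then that the Cauchy surfaces furnished by Theorem~\ref{thm:split} are compact. The only tools I need are the limit curve theorem (see e.g.\ \cite{BEE}), the openness of $\ll$, and the NFOH. I fix once and for all a \emph{complete} Riemannian metric $h$ on $M$ and parametrize all causal curves by $h$-arclength, so that blow-up of length forces future-inextendibility of limit curves.

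For compactness of $D:=J^+(p)\cap J^-(q)$, suppose it fails. Since $M$ is metrizable, there is a sequence $x_i\in D$ with no subsequence converging to a point of $D$. For each $i$ pick a future-directed causal curve $\gamma_i\colon[0,a_i]\to M$ from $p$ to $q$ with $x_i=\gamma_i(t_i)$. If some subsequence of the $a_i$ stays bounded, the limit curve theorem produces (after passing to a subsequence) a future-directed causal curve $\gamma\colon[0,a]\to M$ from $p$ to $q$, a uniform-on-compacts limit of the $\gamma_i$; choosing a further subsequence with $t_i\to t$ gives $x_i\to\gamma(t)$, which lies in $D$ because $p\leq\gamma(t)\leq q$ --- contradicting the choice of $(x_i)$. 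Hence $a_i\to\infty$, and the limit curve theorem now yields a \emph{future-inextendible} causal curve $\gamma\colon[0,\infty)\to M$ with $\gamma(0)=p$ and $\gamma_i\to\gamma$ uniformly on compacts. By the NFOH, $I^-(\gamma)=M$, so $q\ll\gamma(s)$ for some $s\geq0$. For $i$ large, $s<a_i$, whence $\gamma_i(s)\leq q$ (it precedes the endpoint $q$ on the future-directed $\gamma_i$); on the other hand $\gamma_i(s)\to\gamma(s)\in I^+(q)$, and since $I^+(q)$ is open, $q\ll\gamma_i(s)$ for $i$ large. Combining, $q\ll\gamma_i(s)\leq q$, i.e.\ $q\ll q$: a closed timelike curve, contradicting causality. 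Thus every diamond is compact and $(M,g)$ is globally hyperbolic.

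For compactness of the Cauchy surfaces I argue by contraposition: assuming a Cauchy surface $\Sigma$ is non-compact, I produce a future observer horizon, i.e.\ a future-inextendible causal curve $\gamma$ with $I^-(\gamma)\neq M$. Fix a point $r$ and consider the achronal boundary $\partial I^+(r)$, a closed topological hypersurface ruled by null geodesic generators emanating from $r$; along each generator the curve remains on $\partial I^+(r)$ until its first focal point or first meeting with another generator, after which it enters $I^+(r)$. Any generator that never leaves $\partial I^+(r)$ is future-inextendible with image in $M\setminus I^+(r)$, so that $r\notin I^-(\gamma)$ and the NFOH fails. The geometric content is that non-compactness of $\Sigma$ prevents the future light cones from re-focusing compactly: if all generators of $\partial I^+(r)$ left the boundary within a uniformly bounded parameter, then $\partial I^+(r)$, being the image under $\exp_r$ of a bounded subset of the compact cone of null directions, would be compact --- and such compact re-focusing of all future light cones is incompatible with a non-compact Cauchy surface. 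In the complementary case one extracts a trapped, future-inextendible generator as a limit of generators whose leaving-parameters tend to infinity. Either way the NFOH is violated, so $\Sigma$ must be compact.

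The diamond argument is robust; the only care needed is the completeness of the auxiliary metric $h$ and the correct use of the bounded- and unbounded-length forms of the limit curve theorem. The genuine obstacle is the Cauchy-surface step, namely converting ``non-compact $\Sigma$'' into an honest trapped generator. This requires the fine structure of achronal boundaries together with a semicontinuity argument for the leaving-parameter of generators, and --- crucially --- the use of the NFOH for \emph{all} future-inextendible curves: for any single observer $\gamma$ the condition $I^-(\gamma)=M$ is perfectly compatible with $\Sigma$ being a non-compact exhaustion $\bigcup_s\bigl(I^-(\gamma(s))\cap\Sigma\bigr)$, so a naive covering argument cannot force compactness. I would therefore concentrate the essential use of the hypothesis on the claim that a non-compact Cauchy surface yields some $r$ for which $\partial I^+(r)$ fails to re-focus compactly and hence carries a trapped generator; an alternative worth exploring is to phrase the same obstruction as the non-existence of a common future point $\bigcap_{x\in\Sigma}I^+(x)\neq\emptyset$, but the boundary-generator route appears the most self-contained.
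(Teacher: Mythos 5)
Your argument splits into two halves, and they fare very differently. The first half (compactness of causal diamonds) is correct and complete, and it takes a genuinely different route from the paper: the paper observes that the NFOH is equivalent to the absence of future lightlike rays and then quotes Minguzzi's theorem that a chronological spacetime without future lightlike rays is globally hyperbolic, whereas you run a direct limit-curve dichotomy on the connecting curves from $p$ to $q$. Your bounded-length case lands the limit point in $J^+(p)\cap J^-(q)$, and your unbounded-length case produces a future-inextendible limit curve $\gamma$, so that the NFOH gives $q\ll\gamma(s)$, and openness of $\ll$ together with $\gamma_i(s)\leq q$ forces $q\ll q$, contradicting causality. Modulo the standard limit curve theorem with a complete auxiliary metric (the same tool, \cite[Prop.~3.31]{BEE}, that the paper itself uses elsewhere), this is self-contained and more elementary than the citation-based proof.

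The second half has a genuine gap, which you yourself flag. Your contraposition rests on two claims, neither of which is proven. Claim (a): if every generator of $\partial I^+(r)$ leaves the boundary at finite parameter, then $\partial I^+(r)$ is compact. This needs the upgrade from ``each leaving parameter finite'' to ``uniformly bounded,'' i.e.\ exactly the semicontinuity/limit-extraction argument you defer; it also needs care with incomplete null geodesics, since $\exp_r$ need not be defined on the whole bounded cone (though an inextendible, incomplete generator trapped in the boundary would in fact also serve your purpose), and it silently uses that all generators emanate from $r$, which holds because of the global hyperbolicity you established in the first half but fails in general spacetimes. Claim (b): a compact $\partial I^+(r)$ is incompatible with a non-compact Cauchy surface. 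This is true but is itself a theorem requiring proof; one way is to note that $\partial I^+(r)$ is achronal and edgeless, so the projection $\bR\times\Sigma\to\Sigma$ from the splitting of Theorem~\ref{thm:split} restricts to an injective continuous map on it, which by invariance of domain has open image, while compactness makes the image closed, forcing the image to be all of $\Sigma$ and hence $\Sigma$ compact. Without (a) and (b), the chain ``non-compact $\Sigma$ $\Rightarrow$ trapped generator $\Rightarrow$ NFOH fails'' is a program, not a proof.

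It is worth noting how the paper closes this same gap: your sought-after trapped generator is precisely a future lightlike ray in the paper's terminology, so your second half amounts to proving that a globally hyperbolic spacetime with non-compact Cauchy surfaces admits a future lightlike ray. The paper sidesteps this by citing the lemma of Wald and Yip \cite{WaYi}: when the future causal boundary is a spacelike singleton (which the NFOH guarantees), $\Sigma\cap I^-(\gamma)$ is compact for \emph{every} future-inextendible causal curve $\gamma$, whence $\Sigma=\Sigma\cap I^-(\gamma)$ is compact. That lemma is exactly the strengthening needed to defeat the exhaustion obstruction you correctly identified: since $\bigcup_s \bigl(I^-(\gamma(s))\cap\Sigma\bigr)$ can exhaust a non-compact $\Sigma$, no covering argument using one slice at a time can work, and one needs compactness of the full union. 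Your diagnosis of where the real difficulty lies is therefore accurate, but the proposal does not yet close it.
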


\begin{proof}
 {The NFOH is equivalent to the absence of future lightlike rays, i.e.\ future inextendible achronal causal curves. Clearly, if $\gamma$ is future-inextendible, then $I^-(\gamma) = M$ and $\gamma$ cannot be achronal. Conversely, if $I^-(\sigma) \neq M$ for some future-inextendible causal $\sigma$, then $\partial I^-(\sigma)$ is non-empty and is ruled by future inextendible lightlike rays. Minguzzi proved that if $(M,g)$ is chronological and has no future lightlike rays, then it is globally hyperbolic \cite[Thm.~8]{MinCMP}, settling the first part of the proposition. Then Theorem~\ref{thm:split} implies there is a Cauchy surface $\Sigma \subset M$. A result of Wald and Yip \cite[Lem.~2]{WaYi} states that for every future inextendible causal curves $\gamma$, $\Sigma \cap I^-(\gamma)$ is compact, which in our case means that $\Sigma \cap M = \Sigma$ is compact (note that the NFOH implies that the future causal boundary is a singleton, and hence it is spacelike, so the assumptions of \cite[Lem.~2]{WaYi} are met).}
\end{proof}

Lemma~\ref{lem:NOHCauchy} below provides an alternative characterization of the NFOH.

\begin{defn}[{\cite[p.~3]{Paeng}}]
 A pair of subsets $A,B \subset M$ is called \emph{totally timelike connected} if for all points $p \in A$ and $q \in B$ it holds that $p \ll q$.
\end{defn}

Note that the order matters, since in particular, it follows that $B$ is in the future of $A$. Moreover, total timelike connectedness defines a transitive relation on subsets.

\begin{lem} \label{lem:NOHCauchy}
 Suppose $(M,g)$ is globally hyperbolic with compact Cauchy surfaces. Then the following are equivalent:
 \begin{enumerate}
  \item $(M,g)$ satisfies the NFOH.
  \item For every Cauchy temporal function $\tau \colon M \to \bR$ and every $s \in \bR$, there exists a $t > s$ such that $\tau^{-1}(s)$ and $\tau^{-1}(t)$ are totally timelike connected.
 \end{enumerate}
\end{lem}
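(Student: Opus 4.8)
The plan is to prove both implications, using throughout the reformulation of the NFOH established in the proof of Proposition~\ref{prop:NFOHCauchy}: the NFOH is equivalent to the absence of \emph{future lightlike rays}, i.e.\ future-inextendible achronal causal curves. I will also use the standard fact that along any future-inextendible causal curve $\gamma$ in a globally hyperbolic spacetime, a Cauchy temporal function satisfies $\tau(\gamma(\lambda)) \to +\infty$; this follows by extending $\gamma$ to a past-inextendible, hence inextendible, causal curve, which meets every Cauchy surface $\tau^{-1}(r)$ exactly once at a parameter that increases without bound.

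For the implication \textit{(ii)} $\Rightarrow$ \textit{(i)}, I fix a future-inextendible causal curve $\gamma$ and an arbitrary point $x \in M$, and show $x \in I^-(\gamma)$. Setting $s := \tau(x)$, let $t > s$ be as provided by \textit{(ii)}, so that $x \ll q$ for every $q \in \tau^{-1}(t)$. Since $\tau \to +\infty$ along $\gamma$, there is a parameter $\lambda_0$ with $\tau(\gamma(\lambda_0)) \geq t$; write $y := \gamma(\lambda_0)$. A past-inextendible causal curve through $y$ meets the Cauchy surface $\tau^{-1}(t)$ at a point $q$, and since $\tau$ increases along causal curves, $q \leq y$. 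Combining $x \ll q \leq y$ via the push-up property yields $x \ll y = \gamma(\lambda_0)$, so $x \in I^-(\gamma)$. As $x$ was arbitrary, $I^-(\gamma) = M$, which is the NFOH.

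For the implication \textit{(i)} $\Rightarrow$ \textit{(ii)}, fix a Cauchy temporal function $\tau$ and $s \in \bR$. The key intermediate claim is that for each $p$ there exists $t$ with $\tau^{-1}(t) \subseteq I^+(p)$. I argue by contradiction: if $\tau^{-1}(t) \not\subseteq I^+(p)$ for all large $t$, then since each Cauchy slice $\tau^{-1}(t)$ is connected (as $M$ is connected) while $I^+(p)\cap\tau^{-1}(t)$ is a nonempty proper open subset of it, connectedness forces $\partial I^+(p) \cap \tau^{-1}(t) \neq \emptyset$ for all large $t$. Choosing boundary points $r_n$ with $\tau(r_n) \to \infty$ and following the null generators of $\partial I^+(p)$ back to $p$, a limit curve argument produces a future-inextendible causal curve $\eta$ from $p$ lying entirely in the closed achronal set $\partial I^+(p)$; being contained in an achronal set, $\eta$ is a future-inextendible achronal causal curve, i.e.\ a future lightlike ray, contradicting the NFOH. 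This achronal-boundary/limit-curve step is the main obstacle, as it is where the global causal structure is genuinely used.

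It remains to upgrade the pointwise claim to a uniform $t$ that works for all $p \in \tau^{-1}(s)$ at once. For fixed $t$, the set $U_t := \{p : \tau^{-1}(t) \subseteq I^+(p)\}$ is open: since $\{(a,b) : a \ll b\}$ is open and $\tau^{-1}(t)$ is compact, the tube lemma applied to $\{p\}\times\tau^{-1}(t)$ yields a neighbourhood of any $p \in U_t$ contained in $U_t$. Moreover the $U_t$ are nested increasingly: if $\tau^{-1}(t)\subseteq I^+(p)$ and $t' > t$, then any $q'\in\tau^{-1}(t')$ lies in the future of some $q \in \tau^{-1}(t) \subseteq I^+(p)$, so $p \ll q \leq q'$ gives $p \ll q'$. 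The pointwise claim shows that $\{U_t\}_{t>s}$ covers the compact surface $\tau^{-1}(s)$, so finitely many, and hence by nesting a single $U_{t^*}$ with $t^* > s$, already cover it. For this $t^*$ we have $p \ll q$ for all $p \in \tau^{-1}(s)$ and $q \in \tau^{-1}(t^*)$, i.e.\ $\tau^{-1}(s)$ and $\tau^{-1}(t^*)$ are totally timelike connected, as required.
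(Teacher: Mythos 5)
Your proof is correct, and for the substantive implication it takes a genuinely different route from the paper. The direction \textit{(ii)}~$\Rightarrow$~\textit{(i)} coincides with the paper's argument; you are in fact slightly more careful, since the paper simply asserts that the future-inextendible curve ``must intersect'' $\tau^{-1}(t)$, whereas you handle the case where the curve starts above that level set by going through a point $q \leq y$ on a past extension and applying push-up. The divergence is in \textit{(i)}~$\Rightarrow$~\textit{(ii)}. The paper stays entirely elementary: working in the splitting $M \cong \bR \times \Sigma$ of Theorem~\ref{thm:split}, it applies the NFOH to the vertical timelike lines $\lambda \mapsto (\lambda,x)$, so that for fixed $x$ the sets $I^-((\lambda,x)) \cap \Sigma$ form an open cover of the single compact surface $\Sigma$, yielding $T_x$ with $\Sigma \subset I^-((T_x,x))$; a second compactness argument with product neighborhoods then makes $T_x$ uniform in $x$. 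Nothing beyond open covers and compactness is used. You instead prove the dual pointwise claim---some level set $\tau^{-1}(t)$ contained in $I^+(p)$---by contradiction, via connectedness of the Cauchy slices, the structure of the achronal boundary $\partial I^+(p)$ (here global hyperbolicity matters: $J^+(p)$ is closed, so $\partial I^+(p) = J^+(p)\setminus I^+(p)$ and each boundary point is joined to $p$ by a null geodesic lying in the boundary), and a limit curve theorem producing a future lightlike ray, contradicting the ray characterization of the NFOH from the proof of Proposition~\ref{prop:NFOHCauchy}. This machinery is heavier, but it is the same machinery the paper itself invokes (the ray characterization in Proposition~\ref{prop:NFOHCauchy}, limit curves in Theorem~\ref{thm:biLip}), and it is genuinely needed on your route, since the level sets you sweep through are not contained in any one compact set, so a single open-cover argument cannot replace it. In exchange, your uniformization step---the nested open sets $U_t$ together with the tube lemma---is cleaner and more conceptual than the paper's product-neighborhood construction. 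Both proofs are complete; yours trades the paper's elementary double-compactness argument for standard causal-structure theory.
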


\begin{proof}
 $(ii) \implies (i)$. Let $\tau \colon M \to \bR$ be a Cauchy temporal function, let $\gamma$ be a future inextendible causal curve, and let $p \in M$ be a point. For $s = \tau(p)$, assumption (ii) guarantees the existence of a $t \in \bR$ such that $p \ll q$ for all $q$ with $\tau(q) = t$. Since $\gamma$ is inextendible, it must intersect the level set $\tau^{-1}(t)$, and hence $p \in I^-(\gamma)$. Since $p$ was arbitrary, $I^-(\gamma) = M$.

 $(i) \implies (ii)$. Let $\tau$ be a Cauchy temporal function, $s \in \bR$, and let $\Sigma$ denote $\tau^{-1}(s)$. By Theorem~\ref{thm:split}, $M$ splits into $\bR \times \Sigma$. By (i), since $ \lambda \mapsto (\lambda,x)$ is an inextendible causal curve for every $x \in \Sigma$, the collection $I^-((\lambda,x)) \cap \Sigma$ for all $\lambda \in \bR$ forms an open cover of $\Sigma$. By compactness of $\Sigma$, there is a finite subcover, and hence a value $T_x$ such that $\Sigma \subset I^-((T_x,x))$. Property (ii) then follows if we can choose $T_x$ independent of $x$.

 To see the latter, let $x \in \Sigma$ be fixed, and notice that if $y \in \Sigma \cap I^-((T_x,x))$, then there exist neighborhoods $U$ of $y$ and $V$ of $(T_x,x)$ such that every point in $U$ is in the chronological past of every point in $V$. By compactness, we may cover $\Sigma$ by finitely many such $U$, and take the intersection $\tilde V$ of the corresponding $V$'s. Let $p \colon \bR \times \Sigma \to \Sigma$ denote the canonical projection. Then, for all $z \in p(\tilde{V})$, we have that $\Sigma \subset I^-((T_x,z))$. Repeating this construction for every $x \in \Sigma$, we obtain an open cover of $\Sigma$ by sets of the form $p(\tilde{V}_i)$, and corresponding values $T_i$ such that $\Sigma \subset I^-((T_i,z))$ for all $z \in p(\tilde{V}_i)$. By compactness, there is a finite subcover $p(\tilde{V}_j)$ for $j = 1,...,J$. Setting $T = \max_j\{T_j\}$ we conclude that $\Sigma$ and $\tau^{-1}(T)$ are totally timelike connected.
\end{proof}

Proposition~\ref{prop:NFOHCauchy} and Lemma~\ref{lem:NOHCauchy} have the following interesting consequence.

\begin{lem} \label{lem:tauCauchy}
 If $(M,g)$ is causal and satisfies the NOH, then every time function $\tau \colon M \to \bR$ has Cauchy level sets.
\end{lem}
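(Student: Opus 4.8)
The plan is to prove that, for every $t$ in the image of $\tau$, the level set $S_t := \tau^{-1}(t)$ is met exactly once by every inextendible causal curve; since Proposition~\ref{prop:NFOHCauchy} guarantees that $(M,g)$ is globally hyperbolic, this is precisely the assertion that $S_t$ is a (topological) Cauchy surface. I would first note that $S_t$ is closed (as $\tau$ is continuous) and acausal (as $\tau$ is strictly increasing along future-directed causal curves). Consequently, for any inextendible causal curve $\gamma$, the composite $f := \tau \circ \gamma$ is continuous and strictly increasing, so $\gamma$ meets $S_t$ \emph{at most} once; the real content is to show it meets $S_t$ \emph{at least} once.

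To this end I would analyze the range of $f$, aiming to show it equals the open interval $(A,B)$, where $A := \inf_M \tau$ and $B := \sup_M \tau$ in $[-\infty,+\infty]$. For the upper endpoint I use the NFOH: the future end of $\gamma$ is future-inextendible, so $I^-(\gamma) = M$, and hence every $p \in M$ satisfies $p \ll \gamma(s)$ for some $s$, giving $\tau(p) < \tau(\gamma(s)) \le \sup_\gamma f \le B$. As $p$ ranges over $M$ this forces $\sup_\gamma f = B$ and, crucially, shows that $B$ is never attained on $M$. The NPOH yields the mirror statement $\inf_\gamma f = A$ with $A$ not attained. The intermediate value theorem then identifies the range of $f$ with $(A,B)$.

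It remains to place $t$ strictly inside this interval. Writing $t = \tau(p_0)$, the non-attainment of the extrema gives $A < \tau(p_0) = t < B$, so $t$ lies in the range of $f$ and $\gamma$ crosses $S_t$. Together with the ``at most once'' observation, this shows that every inextendible causal curve meets each nonempty level set exactly once, completing the proof. Alternatively, one may route the chronological estimates through Lemma~\ref{lem:NOHCauchy} applied to a fixed Cauchy temporal function and the total timelike connectedness of its level sets, but the direct argument above seems shortest.

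I expect the delicate point to be exactly the strict inequalities $A < t < B$, i.e.\ that $\tau$ attains neither its infimum nor its supremum on $M$: this is where the no-observer-horizon hypotheses are indispensable, since they rule out the degenerate scenario in which $\gamma$ only approaches $S_t$ asymptotically (with $f \to t$ along its future or past end) without ever meeting it. Both halves of the NOH are genuinely needed here---NFOH to force the future end of $\gamma$ up to $B$, and NPOH to force its past end down to $A$---and global hyperbolicity from Proposition~\ref{prop:NFOHCauchy} is what legitimizes calling a set met once by every inextendible causal curve a Cauchy surface.
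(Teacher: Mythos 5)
Your proof is correct, but it follows a genuinely different route from the paper's. The paper argues by trapping: it uses Proposition~\ref{prop:NFOHCauchy} to get a reference Cauchy temporal function $\tilde\tau$, then Lemma~\ref{lem:NOHCauchy} to produce level sets $\tilde\Sigma_{t_1}$, $\tilde\Sigma_{t_3}$ of $\tilde\tau$ totally timelike connected to the one passing through a given point of $\Sigma_t=\tau^{-1}(t)$, so that achronality forces $\Sigma_t \subset J^+(\tilde\Sigma_{t_1}) \cap J^-(\tilde\Sigma_{t_3})$; it then invokes a criterion of Galloway \cite{GalCauchy} asserting that an achronal set sandwiched between two Cauchy surfaces is itself a Cauchy surface. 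You instead verify the Cauchy property directly: NFOH forces $\sup(\tau\circ\gamma)=\sup_M\tau$ along every inextendible causal curve $\gamma$ and simultaneously shows that $\sup_M\tau$ is not attained on $M$, NPOH gives the mirror statement, and acausality of level sets plus the intermediate value theorem finish the argument. Your version is more elementary and self-contained: it needs neither Lemma~\ref{lem:NOHCauchy} nor the external citation, and in fact not even global hyperbolicity --- your appeal to Proposition~\ref{prop:NFOHCauchy} is only packaging, since ``met exactly once by every inextendible causal curve'' is already the (topological) definition of a Cauchy surface, and non-attainment of the extrema makes the openness of the range of $\tau\circ\gamma$ automatic. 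What the paper's route buys is economy within its own framework (it reuses Lemma~\ref{lem:NOHCauchy}, already proven) and a trapping technique that adapts immediately to the one-sided refinement stated in the remark following the lemma --- that NFOH alone makes all level sets above some value Cauchy --- whereas your two-sided $\sup$/$\inf$ argument would need modification there, since NFOH controls only the future end of $\tau\circ\gamma$.
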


\begin{proof}
  {By Proposition~\ref{prop:NFOHCauchy}, $(M,g)$ is globally hyperbolic, and hence admits a Cauchy temporal function $\tilde \tau$. Let $\Sigma_t$ be any level set of $\tau$, and $p \in \Sigma_t$. By Lemma~\ref{lem:NOHCauchy}, we can find level sets $\tilde\Sigma_{t_i}$ of $\tilde \tau$ for values $t_1 < t_2 = \tilde\tau(p) < t_3$ such that $\tilde \Sigma_{t_i}$ and $\tilde \Sigma_{t_{i+1}}$ are totally timelike connected. Because $\Sigma_t$ is achronal and intersects $\tilde \Sigma_{t_2}$, it cannot intersect $I^-(\tilde \Sigma_{t_1})$ or $I^+(\tilde \Sigma_{t_3})$. Thus $\Sigma_t \subset J^+(\tilde \Sigma_{t_1}) \cap J^-(\tilde \Sigma_{t_3})$ and by \cite[Cor.~2]{GalCauchy}, $\Sigma_t$ is a Cauchy surface.}
\end{proof}

{Note that if $(M,g)$ only satisfies the NFOH, the statement is false in general. One can adapt the above proof, however, to show that when the NFOH is satisfied, given any time function $\tau$, all level sets corresponding to values larger than some constant are Cauchy surfaces. For example, on the spacetime $(0,\infty) \times S^1$ with product metric $-dt^2 + d\theta^2$, the level sets of the time function $\tau(t,\theta) = t + 1/2 \sin(\theta)$ are not Cauchy for values of $\tau$ smaller than $1/2$.}

\subsection{Physical context}

Let us conclude this section with a physics oriented discussion of the NFOH and our results. In fact, it is more reasonable to consider the NPOH instead. By Proposition~\ref{prop:NFOHCauchy}, our spacetimes will be globally hyperbolic with compact Cauchy surfaces. This places us in a cosmological context, since spacetimes modelling isolated systems (rather than the whole Universe) are usually asymptotically flat, and hence have non-compact Cauchy surfaces. Even in cosmology, the observed spatial flatness of the Universe has popularized FLRW spacetimes with flat slices (isometric to $\bR^n$, hence non-compact). Recall that an FLRW spacetime is a warped product $((0,\infty) \times \Sigma,-dt^2 + a^2(t) h)$, where $(\Sigma,h)$ is one of the Riemannian model spaces of constant curvature, and $a(t)$ is called the scale factor.

{There are, however, reasons not to discard compact Cauchy surfaces altogether. Recently, the notion that spatial isotropy leads one to consider only FLRW spacetimes has been challenged \cite{San,Ava}. Specifically, S\'anchez has introduced cosmological models which are foliated by spacelike slices of constant curvature, but which are not FLRW spacetimes, and where the sign of the curvature and the topology of the slices can change. In particular, there can be flat (non-compact, non-Cauchy) slices, even if the spacetime is globally hyperbolic with compact Cauchy surfaces.}

{Moving on, recall that for FLRW spacetimes, there is the \emph{no particle horizons} condition
\begin{equation} \label{eq:particle}
  \int_0^1 \frac{1}{a(t)} dt = \infty.
\end{equation}
In the case of compact spatial slices $\Sigma$, this condition is equivalent to the NPOH (see \cite[Prop.~3.6]{Sbi}  for a proof, which is valid for more general warped products, as it is not used that the slices are simply connected and of constant curvature). Current inflationary cosmology stipulates that there are no particle horizons, or at least that they are very large (i.e.\ the integral \eqref{eq:particle} is very large) \cite[Chap.~4]{Wei}. This is because the observed isotropy of the cosmic microwave background is explained by thermalization, which requires different spatial regions to have been in causal contact in the early universe. The characterization of the NPOH given by Lemma~\ref{lem:NOHCauchy} provides a very strong version of this: any two points in space are causally connected in their past.}

{Let us use this opportunity to also discuss the intuitive meaning of the NFOH. One can interpret it as requiring that the spacetime has compact Cauchy surfaces (as is established by Proposition~\ref{prop:NFOHCauchy}) and that additionally, the lightcones do not become excessively narrow towards future infinity (analogously for past infinity when considering the NPOH). Choosing a Cauchy splitting of the metric
\begin{equation*}
 M = \bR \times \Sigma,\quad g = -\beta dt^2 + h = \beta(-dt^2+h/\beta),
\end{equation*}
a sufficient condition for NFOH is
\begin{equation*}
  g_t/\beta \leq h(t)g_0/(\beta|_{t=0})
\end{equation*}
where $h(t)$ cannot diverge too fast when $t \to +\infty$ (this meaning that the cones are not narrowing too fast towards the future). This can be made precise in terms of the \emph{future arrival time function} \cite[Prop.~4.3]{San2}. Using this language, Lemma 2.9 can also be proven via continuity of the future arrival time function \cite[Lem.~4.1]{San2}.}

\section{The action of $\Isom^\uparrow(M,g)$ is proper} \label{sec:Isomproper}

In this section, we prove the main results stated in the introduction, and then provide some related examples. We denote by $\Isom^\uparrow(M,g)$ the group of time-orientation preserving isometries of $(M,g)$, i.e.\ isometries $\phi \colon M \to M$ such that for every future-directed causal vector $X \in TM$, $d\phi(X)$ is again future-directed. By abuse of nomenclature, we will sometimes refer to $\Isom^\uparrow(M,g)$ simply as ``the isometry group''.

\subsection{Proofs of the main results}

We start by proving the main theorem, or, in fact, a slight strengthening of it: we only need the no observer horizons condition in one direction, i.e., the NFOH (or the NPOH).

\begin{thm} \label{thm:Isomproper}
 Let $(M,g)$ be a causal spacetime satisfying the NFOH. Then $\Isom^\uparrow(M,g)$ acts properly on $M$.
\end{thm}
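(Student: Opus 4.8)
The plan is to bootstrap from the properness of the full isometry group on the frame bundle (Theorem~\ref{Thm:OM}); the only thing missing to descend to $M$ is control of the \emph{boosts}, i.e.\ the non-compact part of the structure group $\OG(1,n-1)$. Suppose the action on $M$ were not proper. By Proposition~\ref{prop:properactions} there are sequences $p_i \to p$ and $\phi_i \in \Isom^\uparrow(M,g)$ with $q_i := \phi_i(p_i) \to q$, but with $(\phi_i)_i$ having no convergent subsequence. Fix orthonormal frames $F_i \to F$ over $p_i \to p$; then $D\phi_i(F_i)$ lies over the convergent sequence $q_i \to q$. If $D\phi_i(F_i)$ subconverged in $O(M)$, Theorem~\ref{Thm:OM} would force $(\phi_i)_i$ to subconverge in $\Isom(M,g)$, with limit in $\Isom^\uparrow$ (time-orientation preservation passes to $C^1$-limits), a contradiction. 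Hence $D\phi_i(F_i)$ escapes every compact subset of $O(M)$, and since the base points converge the escape is purely in the fiber $\OG(1,n-1)$. As its maximal compact subgroup $\OG(1)\times\OG(n-1)$ fixes the timelike axis, the escape is a pure boost: writing $u_i \to u$ for the future-directed unit timelike leg of $F_i$ and $w_i := D\phi_i(u_i)$ (future-directed unit timelike at $q_i$, because $\phi_i$ preserves the time orientation), the rapidity of $w_i$ is unbounded, so after passing to a subsequence the $g$-normalized directions $\hat w_i$ converge to a future-directed \emph{null} direction $\hat\ell$ at $q$.

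To turn this unbounded boost into a causal contradiction I access the no-observer-horizons hypothesis through totally timelike connected Cauchy surfaces. By Proposition~\ref{prop:NFOHCauchy} the spacetime is globally hyperbolic with compact Cauchy surfaces; fix a Cauchy temporal function $\tau$ (Theorem~\ref{thm:split}). Choose $s > \tau(q)$ and, by Lemma~\ref{lem:NOHCauchy}, a $t>s$ for which the compact Cauchy surfaces $\Sigma_s := \tau^{-1}(s)$ and $\Sigma_t := \tau^{-1}(t)$ are totally timelike connected. Since global hyperbolicity makes the Lorentzian distance $d$ continuous and finite, and every pair in $\Sigma_s \times \Sigma_t$ is chronologically related, the number $\delta := \min_{\Sigma_s \times \Sigma_t} d$ is \emph{positive}. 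Consider the maximal future-directed timelike geodesic $\zeta_i(\sigma) := \exp_{q_i}(\sigma w_i)$, which is $\phi_i$ applied to the corresponding geodesic from $p_i$; being future-inextendible it meets $\Sigma_t$ at a first proper time $\sigma_i^* > 0$. Put $a_i := \exp_{p_i}(\sigma_i^* u_i)$, so $\phi_i(a_i) = \zeta_i(\sigma_i^*) \in \Sigma_t$ and $d(q_i,\phi_i(a_i)) = d(p_i,a_i)$. For large $i$ we have $\tau(q_i) < s$, so a maximizing geodesic from $q_i$ to $\phi_i(a_i) \in \Sigma_t$ crosses $\Sigma_s$ at a point $x_i$, and the reverse triangle inequality gives
\[
 d(q_i,\phi_i(a_i)) \;\geq\; d(x_i,\phi_i(a_i)) \;\geq\; \delta ,
\]
because $(x_i,\phi_i(a_i)) \in \Sigma_s \times \Sigma_t$ is a chronologically related pair.

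The crux, and the step I expect to be the main obstacle, is to show that nonetheless $\sigma_i^* \to 0$. Once this is established the proof is complete: for large $i$ the short geodesic $\sigma \mapsto \exp_{p_i}(\sigma u_i)$ is maximizing, so $\sigma_i^* = d(p_i,a_i) = d(q_i,\phi_i(a_i)) \geq \delta$, contradicting $\sigma_i^* \to 0$. Here the boost does the work: reparametrizing $\zeta_i$ by $\rho := |w_i|\,\sigma$, where $|\cdot|$ is an auxiliary Riemannian norm (note $|w_i| \to \infty$ as the rapidity blows up), gives $\exp_{q_i}(\rho\,\hat w_i)$; since $q_i \to q$ and $\hat w_i \to \hat\ell$, smooth dependence of geodesics on initial data forces these to converge, uniformly on compact $\rho$-intervals, to the future null geodesic $\eta(\rho) = \exp_q(\rho\,\hat\ell)$. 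As $\eta$ is future-inextendible it crosses $\Sigma_t$ at some \emph{finite} parameter $\rho^*$, and standard continuous-dependence arguments (using that $\tau$ is strictly increasing along causal geodesics, so crossings are transverse) give that $\zeta_i$ crosses $\Sigma_t$ at $\rho_i^* \to \rho^*$; translating back, $\sigma_i^* = \rho_i^*/|w_i| \to 0$. In words: an unbounded boost makes the image geodesic traverse the fixed Cauchy slab $\{\,s \le \tau \le t\,\}$ in vanishing proper time, whereas the no-observer-horizons condition guarantees that this slab has a definite timelike ``thickness'' $\delta>0$ which any causal crossing must exceed.

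The technical care will concentrate on two points: the uniformity (along $p_i \to p$) of the normal-neighborhood radius on which short geodesics remain maximizing, and the clean identification of the fiberwise escape in $\OG(1,n-1)$ with a single unbounded boost of $w_i$; the conceptual heart is the passage to the limiting null geodesic. That this mechanism must fail for de~Sitter spacetime is consistent with the theorem: de~Sitter has no totally timelike connected Cauchy surfaces, so no $\delta>0$ is available, and indeed its isometry action is not proper. One could alternatively phrase the contradiction as the production of a future lightlike ray (a future-inextendible achronal causal curve, forbidden under the NFOH), but guaranteeing \emph{global} achronality of the limit curve is more delicate than the distance estimate above, which is why I would route the argument through totally timelike connected surfaces and the reverse triangle inequality.
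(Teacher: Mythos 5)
Your proposal is correct, and it shares the paper's skeleton---reduce properness on $M$ to properness on the frame bundle via Theorem~\ref{Thm:OM}, then note that failure of properness forces a sequence of frames over convergent base points to degenerate toward a null direction---but the endgame is genuinely different. The paper tracks a \emph{spacelike} leg of the degenerating frame: there are spacelike $v_i \to v$ with $c_i D\phi_i(v_i) \to w$ null and $c_i \to 0$, and the contradiction is purely order-theoretic: locally $p_i \not\ll \exp(t v_i)$ for small $t>0$, while the NFOH applied directly to the inextendible null geodesic $s \mapsto \exp(sw)$ gives $q \ll \exp(c'w)$, so by openness of $\ll$ one gets $\phi_i(p_i) \ll \phi_i(\exp(c'c_i v_i))$ for large $i$, contradicting that $\phi_i$ preserves $\ll$. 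You instead track the \emph{timelike} leg (the paper itself notes the two degeneration pictures are equivalent) and derive a metric contradiction: the NFOH enters through Lemma~\ref{lem:NOHCauchy}, which produces a slab between totally timelike connected compact Cauchy surfaces of Lorentzian-distance thickness $\delta > 0$, while the blow-up $|w_i| \to \infty$ makes the image geodesic traverse that slab in proper time tending to $0$; isometry-invariance of $d$ closes the contradiction. What each buys: the paper's argument is leaner, using only preservation and openness of $\ll$ plus the bare definition of the NFOH, with no Lorentzian distance theory; yours is more quantitative but needs heavier machinery (Lemma~\ref{lem:NOHCauchy}, continuity and finiteness of $d$ under global hyperbolicity, existence of maximizing geodesics and the reverse triangle inequality, continuous dependence of geodesics on initial data). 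Both technical points you flag do work out, and the first can in fact be bypassed: you never need short geodesics to be maximizing, since in a globally hyperbolic (hence causal) spacetime $d$ is continuous with $d(p,p)=0$, so $a_i \to p$ already yields $d(p_i,a_i) \to 0$, contradicting $d(p_i,a_i) = d(q_i,\phi_i(a_i)) \geq \delta$ with no uniform normal-neighborhood argument; the second point is exactly the equivalence the paper records when it says that for frames with convergent base points, escaping every compact set amounts to the timelike vector tending (after normalization with respect to an auxiliary Riemannian norm---note your phrase ``$g$-normalized'' is a slip, as $w_i$ is already $g$-unit) to a null vector.
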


\begin{proof}
 By Proposition~\ref{prop:properactions}, the action is proper if and only if for every pair of sequences $p_i \in M$ and $\phi_i \in \Isom^\uparrow(M,g)$ such that $p_i$ and $q_i := \phi_i(p_i)$ converge in $M$, a subsequence of $\phi_i$ converges in $\Isom^\uparrow(M,g)$. By Theorem \ref{Thm:OM}, we know that this will be the case if there is a converging sequence of frames $F_i$ at $p_i$, for which a subsequence of $D \phi_i (F_i)$ converges in $O(M)$.

 We proceed to show that, in fact, for every converging sequence of frames $F_i$, the sequence $D \phi_i (F_i)$ is contained in a compact set of $O(M)$. Note that for a sequence of Lorentzian orthonormal frames whose basepoints converge, the sequence leaves every compact set if and only if there is a subsequence that becomes degenerate in the limit. By this we mean that the hyperplane spanned by the spacelike vectors of the frame tends to a null plane (equivalently, the timelike vector in the frame tends to a null vector).

 Suppose that we have a converging sequence of frames $F_i$ at $p_i \to p$ such that $D \phi_i (F_i)$ has converging basepoints $q_i$ but becomes degenerate in the limit. This can only happen if there is a sequence of spacelike vectors $v_i$ at $p_i$ that converges to a spacelike $v \neq 0$, such that $w_i := c_i D\phi_i(v_i)$ converges to a future-directed null vector $w$ at $q := \lim_{i \to \infty} \phi_i(p_i)$. Here $c_i$ is a sequence of positive numbers, which necessarily tends to $0$, chosen so that $w_i$ converges (for instance, the $c_i$'s can be normalization constants with respect to an auxiliary Riemannian metric). This situation is depicted in Figure~\ref{fig:proof}.

 By Proposition~\ref{prop:NFOHCauchy}, $(M,g)$ is globally hyperbolic, {so we may choose a convex normal neighborhood $U$ of $p$. Then, there is $\varepsilon > 0$ small enough such that for all $t \in (0,\varepsilon]$ and all $i \in \bN$, we have $p_i, \exp( t v_i) \in U$ and} $p_i \not\ll \exp(t v_i)$. We may also multiply all the $w_i$ by the same constant $c'$, so that $w'_i := c'w_i \to w' := c' w$. Since $s \mapsto \exp(s w)$ is an inextendible causal curve, the NFOH implies that for $c'$ large enough, $q \ll \exp(w')$. On the one hand, by continuity of $\exp$ and openness of $\ll$, this implies that $\phi_i(p_i) \ll \exp(w'_i)$ for all large enough $i$. On the other hand, because the $\phi_i$ are isometries,
 \begin{equation*}
  \exp(w'_i) = \exp(c' c_i D\phi_i(v_i)) = \phi_i(\exp(c' c_i v_i)).
 \end{equation*}
 Since $c_i \to 0$, it holds that $c' c_i \leq \varepsilon$ for all large enough $i$, which implies that $p_i \not\ll \exp(c' c_i v_i)$. But the isometries $\phi_i$ must preserve $\ll$, so we have reached a contradiction. It follows that for any converging sequence of frames $F_i$, the sequence of frames $D \phi_i (F_i)$ must remain in a compact set, concluding the proof.
\end{proof}

\begin{figure}
 \centering
 \includegraphics[width=\textwidth]{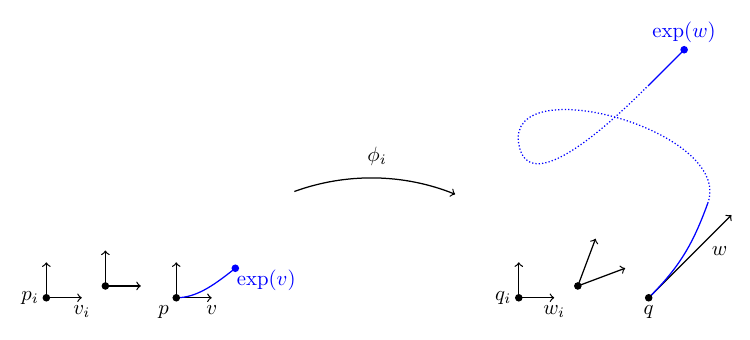}
 \caption{A drawing of the proof of Theorem~\ref{thm:Isomproper}. The situation depicted here leads to a contradiction with the NFOH.}
 \label{fig:proof}
\end{figure}

Corollary~\ref{cor:timeIntro} now arises as a special case of the following theorem.

\begin{thm} \label{thm:time}
 Let $(M,g)$ be a causal spacetime satisfying the NFOH. Suppose that $G$ is a Lie group acting properly on $M$ by time-orientation preserving conformal transformations. Then
 \begin{enumerate}
  \item There exists a Cauchy temporal function $\tau \colon M \to \bR$ such that the action of $G$ on $M$ preserves the {differential} $d\tau$.
  \item There is an induced smooth proper action of $G$ on $\bR$ by translations. This action is, up to scaling, independent of the specific choice of $\tau$.
 \end{enumerate}
\end{thm}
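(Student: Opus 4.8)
The plan is to reduce part \textit{(i)} to a fixed-point problem, extract the invariance from properness, and then read off part \textit{(ii)} almost formally from the invariance of $d\tau$. First I would record the reduction. By Proposition~\ref{prop:NFOHCauchy}, $(M,g)$ is globally hyperbolic with compact Cauchy surfaces, so by Theorem~\ref{thm:split} Cauchy temporal functions exist; fix one, $\tau_0$, giving a splitting $M \cong \bR \times \Sigma$ with $\Sigma$ compact. The key structural remark is that the conditions ``$\tau$ is temporal'' (i.e.\ $d\tau(X)>0$ for future-directed causal $X$) and ``$\tau$ is Cauchy'' are \emph{conformally invariant} and do not see the conformal factor, so $G$ permutes Cauchy temporal functions via $\tau \mapsto \tau\circ g^{-1}$. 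Since at each point the future-directed temporal covectors form an open convex cone, the set of temporal functions modulo constants is a $G$-invariant convex subset of $V := C^\infty(M)/\bR$. Now observe that a function $\tau$ with $d\tau$ $G$-invariant is \emph{exactly} a representative of a $G$-fixed point of this set: for connected $M$, $g^*d\tau = d\tau$ is equivalent to $\tau\circ g - \tau$ being a constant. Thus part \textit{(i)} is the statement that $G$ fixes a point of the temporal cone in $V$.

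To build the invariance I would first manufacture a canonical invariant timelike direction. As $G$ acts properly, a standard averaging argument produces a $G$-invariant Riemannian metric $h$. Form the $h$-selfadjoint endomorphism $A=h^{-1}g$ (defined by $g(X,Y)=h(AX,Y)$); since $g$ is Lorentzian, $A$ has a single, simple negative eigenvalue, and its eigendirection is $g$-timelike. As this eigendirection depends only on $h$ and on the conformal class of $g$ (both $G$-invariant), normalizing to an $h$-unit future-directed vector yields a canonical $G$-invariant timelike vector field $V$. After rescaling by a positive $G$-invariant function we may assume the flow $\Phi_s$ of $V$ is complete; its orbits are then inextendible timelike curves, hence meet every Cauchy surface exactly once, so choosing a Cauchy surface as cross-section gives a splitting $M\cong \bR\times B$ with $B$ compact in which $\Phi_s$ is translation in $\bR$ and $G$ commutes with $\Phi$. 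Writing $g\cdot(s,b)=(s+\theta(g,b),\bar g\,b)$, the failure of a flow-adapted temporal function to be $G$-equivariant is governed by the real-valued cocycle $\theta(g,\cdot)$ on $B$, and producing $\tau$ amounts to solving $\theta(g,b)=c(g)+f(b)-f(\bar g\,b)$ for $f\in C^\infty(B)$ and a homomorphism $c\colon G\to\bR$.

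I expect this trivialization to be the \emph{main obstacle}. The difficulty is precisely that $G$ may be non-compact, so the naive average over $G$ diverges and $\theta$ cannot be a genuine coboundary (time translations must really shift $\tau$); equivalently, the obstruction is a class in $H^1(G,\,C^\infty(M)/\bR)$ that one must show is trivial. What rescues the argument is that only $d\tau$, not $\tau$, is required to be invariant, so the non-compact ``time-translation'' directions of the action contribute only coboundary terms in $V$, and the genuinely nontrivial averaging is over the maximal-compact (effectively bounded) behaviour of the action, where averaging in the vector space $V$ converges and the convex cone guarantees the average is again temporal. Making this precise --- extracting $c$ and the bounded correction $f$ by an $\bR$-valued averaging that converges thanks to properness and the compactness of $B$ --- is the delicate step; Cauchyness of the resulting $\tau$ then follows under the NFOH, since its level sets remain sandwiched between Cauchy surfaces (cf.\ Lemma~\ref{lem:NOHCauchy} and the ensuing remark).

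Part \textit{(ii)} I would then derive directly from the invariance of $d\tau$. Set $c(g):=\tau(g\cdot p)-\tau(p)$; by invariance of $d\tau$ and connectedness of $M$ this is independent of $p$, and $c(gh)=c(g)+c(h)$, so $c\colon G\to\bR$ is a homomorphism, smooth because the action is, defining the translation action $g\cdot t=t+c(g)$ with $\tau$ equivariant. Its kernel $N=\{g:\tau\circ g=\tau\}$ has every orbit contained in a single compact Cauchy surface $\tau^{-1}(s)$, so by Proposition~\ref{prop:properactions}\,\textit{(ii)} and properness of $G$ on $M$ every sequence in $N$ subconverges in $N$, whence $N$ is compact. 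Properness of the $\bR$-action follows: if $t_i\to t$ and $t_i+c(g_i)$ converges, then $c(g_i)$ converges, so for fixed $p$ the points $g_i\cdot p$ have bounded $\tau$-value, hence lie in a compact set (a Cauchy temporal function is proper), so $g_i$ subconverges by properness on $M$. Finally, for another admissible $\tau'$ with homomorphism $c'$, any $g\in N$ has $\tau$-bounded, hence compact, orbit, forcing its $\tau'$-value to stay bounded and so $c'(g)=0$; thus $\ker c=\ker c'=N$, and the two injective homomorphisms $G/N\hookrightarrow\bR$ onto closed subgroups of $\bR$ differ by a scalar, giving the asserted uniqueness up to scaling.
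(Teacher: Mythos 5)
Your part \textit{(ii)} is correct and is essentially the paper's own argument (the constant $c_\phi$, properness via $\tau^{-1}(K)$ being compact, and the kernel-comparison argument for uniqueness all appear there in the same form). The problem is part \textit{(i)}, where your proposal stops exactly at the point where the actual work has to be done. Your reduction is fine up to the cocycle equation: the invariant timelike line field obtained from the eigendirection of $h^{-1}g$ for an invariant complete Riemannian metric $h$ is a nice observation (and correctly conformally invariant), and solving $\theta(g,b)=c(g)+f(b)-f(\bar g\,b)$ would indeed give a function with invariant differential. But the paragraph that is supposed to solve it --- ``the non-compact time-translation directions contribute only coboundary terms'' and ``the genuinely nontrivial averaging is over the maximal-compact (effectively bounded) behaviour of the action, where averaging converges and the convex cone guarantees the average is again temporal'' --- is not an argument: no convergent averaging is defined for non-compact $G$, no decomposition of the action into ``translation'' and ``bounded'' parts is established (nothing like a maximal compact subgroup of $G$ is produced or used), and, crucially, nowhere in this step do you use the NFOH beyond global hyperbolicity. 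That cannot work: the whole difficulty is to show that one can average something that is temporal \emph{along every orbit}, and this is a global statement about orbits that fails without a horizon-type hypothesis. There is also a secondary gap: even granting a solution $(c,f)$ of the cocycle equation, the resulting function $\tau(s,b)=s+f(b)$ need not be temporal, because the flow coordinate $s$ of an invariant timelike vector field is in general \emph{not} a time function (pushing a Cauchy surface along a timelike flow need not produce achronal, let alone spacelike, leaves); your convex-cone remark does not apply because $\tau$ is not exhibited as an average of temporal functions. Finally, your appeal to Lemma~\ref{lem:NOHCauchy} for Cauchyness of the level sets is insufficient as stated: the relevant statement (Lemma~\ref{lem:tauCauchy}) requires the NOH, not just the NFOH, and the remark following it gives a counterexample under NFOH alone.

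For comparison, the paper resolves precisely these points as follows. Instead of averaging $d\hat\tau$ (divergent for non-compact $G$), it averages $\phi^*d\tau_0$ where $\tau_0=f\circ\hat\tau$ has differential supported in a compact slab $K$ between two level sets of $\hat\tau$; properness makes the integral a finite (compact-domain) integral at each point. The heart of the proof is then showing that \emph{every $G$-orbit meets $K$}, so the averaged one-form is everywhere timelike: properness plus non-compactness of $G$ yields an element $\phi$ moving a Cauchy surface $S_0$ entirely off a slab, NFOH via Lemma~\ref{lem:NOHCauchy} makes $S_0$ and $S_1=\phi(S_0)$ totally timelike connected, and an infinite-Lorentzian-length argument shows that the slabs $J^+(S_i)\cap J^-(S_{i+1})$, $S_i=\phi^i(S_0)$, exhaust $M$. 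This exhaustion simultaneously upgrades NFOH to NOH, which is what legitimizes the application of Lemma~\ref{lem:tauCauchy} to conclude that $\tau$ is Cauchy in the non-compact case (the compact case is handled by directly averaging Cauchy temporal functions). Your proposal would need to supply an argument of this kind --- identifying where NFOH forces orbits of a non-compact proper action to sweep out all of $M$ in the time direction --- before the cohomological equation can be solved with a temporal, Cauchy outcome.
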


\begin{proof}
 \textit{(i)} By Proposition~\ref{prop:NFOHCauchy} and Theorem~\ref{thm:split}, there is a Cauchy temporal function $\hat\tau \colon M \to \bR$. Let $\mu$ be a left-invariant Haar measure on $G$. If $G$ is compact, then $\mu(G)$ is finite, and hence
 \begin{equation} \label{eq:dtaucmpct}
  d\tau := \int_{G} \phi^* d \tau_0 \ d\mu(\phi).
 \end{equation}
 is a well-defined one-form on $M$. Moreover,
 \begin{equation*}
  \Lambda_p := \{\phi^* d_p\hat\tau \mid \phi \in G \} \subset T^*_pM
 \end{equation*}
 is compact and contained in the interior of the future lightcone. Thus $d_p\tau$ is future-directed timelike, because it lies in the convex hull of $\Lambda_p$. It is also clear that $d\tau$ is $G$-invariant. By Lebesgue's dominated convergence theorem,
 \begin{equation*}
  \int_c \int_{G} \phi^* d \hat\tau(\dot{c}) \ d\mu(\phi) ds = \int_{G} \int_c  \phi^* d \hat\tau(\dot{c}) \  ds d\mu(\phi)
 \end{equation*}
 for every curve $c : [0,1] \to M$. The right hand side vanishes for closed curves, because for every $\phi$, $\phi^* d \hat\tau$ is exact. It follows that $d \tau$ is also exact, and hence indeed it is the differential of a temporal function $\tau$.

 It remains to deal with the case of non-compact $G$, the problem being that the integral \eqref{eq:dtaucmpct} is then infinite. To avoid this, we replace $\hat\tau$ by a suitable function $\tau_0$ with compact support $K \subset M$. Then,
 \begin{equation} \label{eq:dtau}
  d\tau := \int_{G} \phi^* d \tau_0 \ d\mu(\phi) = \int_{G_p} \phi^* d \tau_0 \ d\mu(\phi),
 \end{equation}
 where
 \begin{equation*}
  G_p = \{ \phi \in G \mid \phi^{-1}(p) \in K \}
 \end{equation*}
 is compact in $G$ (by properness of the action), and hence the integral \eqref{eq:dtau} is finite.

 To define $\tau_0$, let $a<b$ and let $f \colon \bR \to \bR$ be a smooth function such that $f(x) = 0$ for $x \leq a$, $f'(x)>0$ for $x \in (a,b)$, and $f(x)=1$ for $x \geq b$. Then $\tau_0 = f \circ \hat\tau$ is a smooth function on $M$ satisfying
 \begin{itemize}
  \item $\tau_0(p) = 0$ for all $p \in J^-(\Sigma_{a})$,
  \item $d \tau_0$ is future-directed timelike on $K := I^+(\Sigma_{a}) \cap I^-(\Sigma_b)$, and
  \item $\tau_0(p) = 1$ for all $p \in J^+(\Sigma_b)$.
 \end{itemize}
 In order for \eqref{eq:dtau} to be everywhere timelike, we need to ensure that every $G$-orbit has a representative in $K$ (because if there is $p$ such that the orbit of $p$ is disjoint from $K$, then $d_p\tau = 0$, which we have to avoid).

 By Lemma~\ref{lem:NOHCauchy}, there are real numbers $a < b < c < d < e$ such that the corresponding $\hat\tau$ level sets $\Sigma_a$, $\Sigma_b$, $\Sigma_c$, $\Sigma_d$, and $\Sigma_e$ are pairwise totally timelike connected. Consider the compact set $A := \hat\tau^{-1}((a,e)) = I^+(\Sigma_a) \cap I^-(\Sigma_e)$ and a point $p \in \Sigma_c \subset A$. By properness of the action, if $G$ is non-compact, then there must be $\phi \in G$ such that $\phi(p) \not\in A$. Since $\Sigma_c$ is a Cauchy surface, so is $S_1 := \phi(\Sigma_c)$, and it follows by achronality that $S_1 \cap \hat\tau^{-1}((b,d)) = \emptyset$. Therefore $\Sigma_c$ and $S_1$ are totally timelike connected (or $S_1$ and $\Sigma_c$, but then we may replace $\phi$ by $\phi^{-1}$, so there is no loss in generality). Moreover, we recursively construct a family of Cauchy surfaces $S_0 := \Sigma_c$, $S_i := \phi^{i}(S_0)$ for all $i \in \bZ$ with $S_{i+1} \subset I^+(S_i)$. We claim that every $G$-orbit contains a representative in the compact set
 \begin{equation} \label{eq:union}
  K_i := J^+(S_i) \cap J^-(S_{i+1}),
 \end{equation}
 for, say, $i=0$ (compactness follows from global hyperbolicity, given that $S_i$ is compact).

 To see that every $G$-orbit contains a representative in $K_0$, first note that $K_i = \phi^i(K_0)$. Hence it suffices to prove that
 \begin{equation*}
  M = \bigcup_{i \in \bZ} K_i.
 \end{equation*}
 Viewing $M = \bR \times S_0$ (as in Theorem~\ref{thm:split}), since the $S_i$ are Cauchy surfaces, each of them is the graph of a function $F_i \colon S_0 \to \bR$. We need to show that $F_i(x) \to \pm \infty$ when $i \to \pm \infty$, for all $x \in S_0$. Suppose that there is $x_0 \in S_0$ such that $F_i(x_0) \not\to \pm \infty$. Since $S_i \in I^-(S_{i+1})$, it follows that $F_i < F_{i+1}$. Thus, without loss of generality, $F_i(x_0) \to C$ for $i \to +\infty$, where $C$ is a constant. Then, the points $x_i := (F_i(x_0),x_0) \in S_i$ converge to $x_\infty = (C,x_0)$, and since the $S_i$ are totally timelike connected, $x_i \ll x_{i+1}$. We can construct a timelike curve $\gamma$ from $x_0$ to $x_\infty$ by concatenating maximizing geodesic segments $\gamma_i$ going from $x_i$ to $x_{i+1}$. Let $d \colon M \times M \to [0,\infty)$ be the Lorentzian distance (which is continuous by global hyperbolicity), and define
 \begin{equation*}
  D_i := \inf \{ d(p,q) \mid p \in S_i, q \in S_{i+1} \}.
 \end{equation*}
 By compactness of the $S_i$ and continuity of $d$, it follows that $D_i > 0$. Moreover, since $d$ is invariant under isometries, $D_i$ is independent of $i$. Now, the Lorentzian length $L(\gamma)$ satisfies
 \begin{equation} \label{eq:length}
  L(\gamma) = \sum_{i=0}^\infty L(\gamma_i) > \sum_{i=0}^\infty D_i = \infty.
 \end{equation}
 It follows that $\gamma$ must be future inextendible, a contradiction.

 It remains to show that $\tau$ is a Cauchy temporal function. We again distinguish between the cases of $G$ compact or non-compact. When $G$ is compact, it follows from \eqref{eq:dtaucmpct} that $\tau$ has, up to an additive constant, the form
 \begin{equation*}
  \tau = \int_G \hat\tau \circ \phi \ d\mu(\phi).
 \end{equation*}
 Let $\gamma \colon \bR \to M$ be an inextendible causal curve (then, so is $\phi \circ \gamma$). Because $\hat\tau$ is Cauchy, we have that
 \begin{equation*}
  \hat\tau \circ \phi \circ \gamma(s) \to \pm \infty
 \end{equation*}
 for every $\phi \in G$. By compactness of $G$, it then follows that $\tau(\gamma(s)) \to \pm \infty$ as $s \to \pm \infty$, implying that $\tau$ is Cauchy. In the case that $G$ is non-compact, we use that the decomposition \eqref{eq:union} implies that $(M,g)$ in fact satisfied the NOH. Indeed, any inextendible causal curve $\gamma$ must traverse all of the $S_i$'s, which are totally timelike connected, and it is easy to conclude that $I^-(\gamma) = M$. Then, it follows from Lemma~\ref{lem:tauCauchy} that $\tau$ must be Cauchy.

 \textit{(ii)} For all $p,q \in M$ and $\phi \in G$, integrating $d\tau$ along a curve from $p$ to $q$ allows us to prove
 \begin{equation*}
  \tau(p) - \tau(q) = \tau(\phi(p)) - \tau(\phi(q)).
 \end{equation*}
 It follows that $c_\phi := \tau(\phi(p)) - \tau(p)$ is independent of $p$. Thus there is an induced action
 \begin{equation*}
  G \times \bR \longrightarrow \bR, \qquad (g,x) \longmapsto x+c_\phi.
 \end{equation*}
 Let $K$ be a compact subset of $\bR$. Because the $\tau$ is Cauchy, by Theorem~\ref{thm:split}, the manifold $M$ splits as a product $M = \bR \times \Sigma$, where the $\bR$ factor is parametrized by $\tau$. It follows that $K' = \tau^{-1}(K)$ is compact in $M$. Therefore $G_K = G_{K'}$ (see Proposition \ref{prop:properactions}), and properness of the action of $G$ on $\bR$ follows from properness of the action on $M$.

 Finally, we show that the action on $\bR$ is, up to scaling, independent of the specific choice of $\tau$, meaning the following: Suppose $\tau_1$, $\tau_2$ are two Cauchy temporal functions with $d\tau_1$, $d\tau_2$ invariant under the action of $G$. Then, for each $\phi \in G$,
 \begin{equation*}
  \tau_1(\phi(p)) - \tau_1(p) = 0 \iff \tau_2(\phi(p)) - \tau_2(p) = 0
 \end{equation*}
 for one, and hence all, $p \in M$. Indeed, suppose that $\tau_1(\phi(p)) - \tau_1(p) = 0$ but $\tau_2(\phi(p)) - \tau_2(p) \neq 0$, and let $p_n := \phi^n(p)$. On the one hand, $p_n$ stays in the compact level set with value $\tau_1(p)$. On the other hand, $\tau_2(p_n) \to \pm \infty$, and hence $p_n$ leaves every compact set, a contradiction.
\end{proof}

\begin{rem}[Comments on the proof of Theorem~\ref{thm:time}]
 {In the case of compact $G$, there cannot be an isometry $\phi$ acting on $\bR$ by a non-trivial translation (otherwise, its powers $\phi^k$ would form a sequence with no convergent subsequence). Therefore $\tau$ itself, and not only $d\tau$, is invariant. A similar construction of an invariant temporal function was given by M\"uller in \cite{Mue}. In the case of non-compact $G$, it follows from the proof that there is an isometry $\phi$ that not only acts by a non-trivial time translation, but even has the property that $p \ll \phi(p)$ for every $p \in M$. In our follow-up work \cite{GaZe}, conformal transformations with this property are called \emph{escaping}, and a similar argument with a fundamental domain $K$ is performed. In the present paper, however, we exploit the fact that we are working with isometries in equation \eqref{eq:length}, and this allows us to show that when there is an escaping isometry $\phi$, the NFOH is automatically promoted to the NOH.}
\end{rem}

The action of $G$ on $\bR$ by translations allows us to prove Corollary~\ref{cor:prodintro}, the first part of which we restate, with added detail, as the following theorem.

\begin{thm} \label{thm:prod}
 Let $(M,g)$ be a causal spacetime satisfying the NFOH, and let $\tau \colon M \to \bR$ be as in Theorem~\ref{thm:time}. Then
 \begin{equation*}
  \Isom^\uparrow(M,g) = L \ltimes N,
 \end{equation*}
 where $N$ is the compact normal subgroup of isometries which leave $\tau$ (and not just $d\tau$) invariant, and $L \cong \{e\}$, $\bZ$ or $\bR$. Moreover, $L$ (up to isomorphism) and $N$ (in the strict sense, as a subgroup of $\Isom^\uparrow(M,g)$) are independent of the choice of $\tau$.
\end{thm}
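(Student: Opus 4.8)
The plan is to realise the decomposition as the kernel--image splitting of the homomorphism that records the induced time translation. Fix $\tau$ as in Theorem~\ref{thm:time} and set $G := \Isom^\uparrow(M,g)$. Define $c \colon G \to \bR$ by $c_\phi := \tau(\phi(p)) - \tau(p)$; by Theorem~\ref{thm:time}(ii) this is independent of $p$ and $\phi \mapsto c_\phi$ is a homomorphism, which is moreover smooth since the orbit map $\phi \mapsto \phi(p)$ and $\tau$ are smooth. By construction $N = \ker c = \{\phi \in G : \tau\circ\phi = \tau\}$, so $N$ is a closed, hence Lie, and normal subgroup of $G$. Its compactness I would get from properness: given a sequence $\phi_i \in N$ and a point $p$ on a (compact) level set $\tau^{-1}(t)$, the images $\phi_i(p)$ stay in $\tau^{-1}(t)$, so by Proposition~\ref{prop:properactions}(ii) a subsequence of $\phi_i$ converges, with limit in $N$ because $N$ is closed.

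Next I would classify the image. Since the induced $G$-action on $\bR$ is proper (Theorem~\ref{thm:time}(ii)), applying Proposition~\ref{prop:properactions}(iii) to $K = [-n,n]$ shows that $c^{-1}([-2n,2n]) = G_K$ is compact; thus $c$ is a proper map and $c(G)$ is a closed subgroup of $\bR$. As the closed subgroups of $\bR$ are exactly $\{0\}$, $a\bZ$ with $a>0$, and $\bR$, the group $L \cong c(G)$ will be trivial, $\bZ$, or $\bR$.

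It remains to build a closed complement $L$ with $c|_L \colon L \to c(G)$ an isomorphism, from which $N \cap L = \{e\}$ and $G = NL$ follow automatically, giving the inner semidirect product $G = L \ltimes N$ (the product map $N \times L \to G$ being a diffeomorphism). If $c(G) = \{0\}$ take $L = \{e\}$. If $c(G) = a\bZ$, pick $\phi_0$ with $c_{\phi_0} = a$ and set $L = \langle \phi_0 \rangle$; injectivity of $c|_L$ gives $L \cong \bZ$, and closedness follows because $a\bZ$ is discrete. The delicate case, and the main obstacle, is $c(G) = \bR$, where I need a genuine one-parameter subgroup section and not merely a set-theoretic one. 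For this I would pass to the Lie algebra: $dc \colon \mathfrak{g} \to \bR$ cannot vanish (otherwise $c$ would be constant on the identity component and have countable image, contradicting surjectivity), hence it is surjective; choosing $X \in \mathfrak{g}$ with $dc(X) = 1$ and setting $s(t) := \exp(tX)$, naturality of the exponential map gives $c(s(t)) = t$, so $L := s(\bR)$ is a closed one-parameter subgroup isomorphic to $\bR$ with $c|_L$ an isomorphism.

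Finally, for independence of $\tau$: the displayed equivalence in the proof of Theorem~\ref{thm:time}(ii) says precisely that, for two admissible choices $\tau_1,\tau_2$, one has $c^{(\tau_1)}_\phi = 0 \iff c^{(\tau_2)}_\phi = 0$, i.e.\ $\ker c^{(\tau_1)} = \ker c^{(\tau_2)}$, so $N$ is literally the same subgroup for every admissible $\tau$. Since $L$ is a complement to $N$, the projection $G \to G/N$ restricts to an isomorphism $L \cong G/N$, whose right-hand side does not involve $\tau$; hence the isomorphism type of $L$ is independent of the choice of $\tau$ as well.
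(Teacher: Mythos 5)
Your proof is correct, and while it arrives at the same decomposition as the paper ($N$ the kernel of the time-translation homomorphism, $L$ a complement realizing the translations), the way you classify and construct $L$ is genuinely different. The paper delegates everything after Theorem~\ref{thm:time} to a Lie-group statement (Lemma~\ref{lem:semidirect}), proved by setting $\lambda:=\inf\{c>0 \mid \text{some element translates by }c\}$, using properness to extract limits of minimizing sequences, and treating the cases $\lambda>0$ (giving $\bZ$) and $\lambda=0$, where elements acting non-trivially but converging to the identity are turned into a one-parameter subgroup via the exponential map. You instead observe that the translation cocycle $c\colon G\to\bR$ is a homomorphism which is proper as a map (your identity $G_{[-n,n]}=c^{-1}([-2n,2n])$), so $c(G)$ is a \emph{closed} subgroup of $\bR$, and the trichotomy $\{0\}$, $a\bZ$, $\bR$ comes for free from the classification of closed subgroups of $\bR$; in the $\bR$ case your section is built at the Lie-algebra level by choosing $X$ with $dc(X)=1$ and exponentiating, which is cleaner and more robust than the corresponding step in the paper (Case~2 of Lemma~\ref{lem:semidirect} is rather terse exactly where the one-parameter subgroup is extracted). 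Your uniqueness argument for $L$ also differs: the paper distinguishes $\{e\}$, $\bZ$, $\bR$ by topological properties of $G$ (compact; non-compact with infinitely many components; non-compact with finitely many components), whereas you use $L\cong G/N$ together with the $\tau$-independence of $N$; since $\{e\}$, $\bZ$, $\bR$ are pairwise non-isomorphic already as abstract groups, this suffices. Two points you should make explicit to be airtight: first, the step ``$dc=0$ implies $c(G)$ countable'' needs $G=\Isom^\uparrow(M,g)$ to have countably many connected components, which holds because $G$ embeds onto a closed subset of the second countable frame bundle $O(M)$ via a proper orbit map (Theorem~\ref{Thm:OM}); second, the compactness of the level set $\tau^{-1}(t)$ used in your compactness argument for $N$ is supplied by Proposition~\ref{prop:NFOHCauchy}, since level sets of a Cauchy temporal function are Cauchy surfaces and these are compact here.
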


Based on this result, $N$ can be interpreted as spatial isometries, and $L$ as time-translations. (This motivates the order of the factors in $L \ltimes N$, in analogy to the splitting of the spacetime in Theorem~\ref{thm:split}.) In particular, elements of $N$ acts by a Riemannian isometry on each $\tau$-level set. However, this interpretation should be taken with a grain of salt, as Example~\ref{exam:spiral} below illustrates. The remaining part of the proof is strictly in terms of Lie groups, with no further Lorentzian geometry entering the argument. Hence we phrase it in this generality, and we reverse the order of the factors in the (semi-)direct product, to conform with the more common notation in group theory.

\begin{proof}
 By Theorem~\ref{thm:time}, there is a proper action of $\Isom^\uparrow(M,g)$ on $\bR$ by translations, unique up to scaling. Hence $N$ is independent of $\tau$, and Lemma~\ref{lem:semidirect} below proves the theorem. That $L$ is unique up to isomorphism is because the three options $L \cong \{e\}$, $\bZ$ or $\bR$ imply different topological properties for $\Isom^\uparrow(M,g)$: compact, non-compact with infinitely many connected components, or non-compact with finitely many connected components.
\end{proof}

\begin{lem} \label{lem:semidirect}
 Let $G$ be a Lie group acting smoothly and properly by translations on $\bR$. Then, the elements acting trivially form a compact normal subgroup $N$, and
 \begin{equation*}
  G = N \rtimes L,
 \end{equation*}
 where $L$ is either trivial, $\bZ$, or $\bR$.
\end{lem}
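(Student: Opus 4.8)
The plan is to encode the entire action in a single homomorphism and then reduce everything to the classification of closed subgroups of $\bR$. Since $G$ acts by translations, each $g \in G$ acts as $x \mapsto x + c(g)$ for a unique $c(g) \in \bR$, and the relation $c(gh) = c(g) + c(h)$ shows that $c \colon G \to (\bR,+)$ is a homomorphism; it is smooth because $c(g) = g(0)$ is the restriction of the smooth action map to $G \times \{0\}$. By definition $N = \ker c$ is exactly the set of elements acting trivially, so it is a closed normal subgroup, and the lemma becomes the assertion that the sequence $1 \to N \to G \xrightarrow{c} c(G) \to 1$ splits, with $N$ compact and $c(G) \in \{\{0\}, a\bZ, \bR\}$.

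First I would prove $N$ is compact. Applying Proposition~\ref{prop:properactions}\,(iii) to $K = \{0\}$, the set $G_{\{0\}} = \{g \in G : \{0\} \cap g(\{0\}) \neq \emptyset\}$ is compact; but $g(0) = c(g)$, so $G_{\{0\}}$ equals $\{g : c(g) = 0\} = N$. Next I would show $c(G)$ is \emph{closed} in $\bR$: if $c(g_i) \to t$, then taking $p_i = 0$ the sequences $p_i$ and $g_i(p_i) = c(g_i)$ both converge, so by Proposition~\ref{prop:properactions}\,(ii) a subsequence $g_{i_k}$ converges to some $g \in G$, whence $t = \lim c(g_{i_k}) = c(g) \in c(G)$. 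Since the closed subgroups of $\bR$ are precisely $\{0\}$, $a\bZ$ (for $a>0$), and $\bR$, this produces the three alternatives for $L \cong c(G)$.

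It then remains to split $c$, i.e.\ to produce a closed subgroup $L \le G$ mapped bijectively onto $c(G)$ by $c$; together with $N = \ker c$ this yields the inner semidirect product $G = N \rtimes L$, since $N \cap L = \{e\}$ (as $c|_L$ is injective) and $G = NL$ (as any $g$ agrees with some $l \in L$ on $c$-value, so $gl^{-1} \in N$). If $c(G) = \{0\}$ I take $L = \{e\}$. If $c(G) = a\bZ$, I pick any $g_0$ with $c(g_0) = a$ and set $L = \langle g_0 \rangle$; since $c(g_0^n) = na$, the map $n \mapsto g_0^n$ is injective, so $L \cong \bZ$ and $c|_L$ is a bijection onto $a\bZ$.

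The main obstacle is the case $c(G) = \bR$, where I need a genuine one-parameter subgroup section rather than a set-theoretic one. Here I would pass to the Lie algebra. The image $c(G^0)$ of the identity component is a connected subgroup of $\bR$, hence $\{0\}$ or $\bR$; the former would make $c(G)$ countable (as $G$ has at most countably many components), contradicting $c(G) = \bR$, so $c(G^0) = \bR$. Since the image of a connected Lie group under a homomorphism is the connected subgroup with Lie algebra $dc_e(\mathfrak{g})$, this forces $dc_e \colon \mathfrak{g} \to \bR$ to be surjective. Choosing $X \in \mathfrak{g}$ with $dc_e(X) = 1$ and using the naturality $c \circ \exp = \exp_{\bR} \circ \, dc_e$ together with $\exp_{\bR} = \mathrm{id}_{\bR}$, the one-parameter subgroup $\varphi(t) = \exp(tX)$ satisfies $c(\varphi(t)) = t$; hence $\varphi$ is an injective homomorphism and $L := \varphi(\bR) \cong \bR$, with $c|_L$ a bijection onto $\bR$. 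Finally $L$ is closed: if $\varphi(t_i) \to g$ in $G$, then $t_i = c(\varphi(t_i)) \to c(g)$, so by continuity $\varphi(t_i) \to \varphi(c(g)) \in L$, forcing $g = \varphi(c(g)) \in L$. This settles the last case and completes the decomposition.
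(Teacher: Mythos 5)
Your proof is correct, and while it reaches the same three-way dichotomy as the paper, it is organized differently enough that a comparison is worthwhile. The paper never introduces the homomorphism $c$ explicitly: it works with $\lambda := \inf\{c>0 \mid \text{some element acts by } x \mapsto x+c\}$, using properness to realize the infimum by an actual group element when $\lambda>0$ (an elementary argument then shows every translation amount is an integer multiple of $\lambda$, giving the $\bZ$ case), and when $\lambda=0$ to produce nontrivially acting elements converging to the identity, from which a one-parameter group acting by all translations is extracted. You instead use properness twice in structural form --- $N=\ker c$ is compact, and $c(G)$ is closed --- and then quote the classification of closed subgroups of $\bR$; the paper's case distinction $\lambda>0$ versus $\lambda=0$ is exactly the discrete-versus-$\bR$ alternative packaged inside that classification. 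The genuine divergence is in the $\bR$ case: the paper's passage from ``elements acting nontrivially and converging to the identity'' to the existence of a one-parameter section is only sketched (``because the exponential map is a local diffeomorphism\dots''), whereas your chain --- $c(G)=\bR$ forces $c(G^0)=\bR$ by countability of components, hence $dc_e$ is surjective, and naturality $c\circ\exp=\exp_{\bR}\circ\, dc_e$ yields the section $\varphi(t)=\exp(tX)$ --- is a complete argument, and is in fact the natural way to make the paper's sketch rigorous. Two minor remarks: your countability step uses that Lie groups have at most countably many components (second countability, as in the paper's reference for group actions), whereas the paper's sequence argument would yield $dc_e\neq 0$ directly from the elements near the identity without this; and you additionally verify that $L$ is closed in $G$, which the statement does not require (the paper's inner semidirect product only needs $N$ normal, $N\cap L=\{e\}$, and $G=NL$) but which strengthens the conclusion to a splitting by a closed subgroup.
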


\begin{proof}
 Because the action is by translations, the elements of $G$ that act trivially form a normal subgroup $N$. Since $\{0\} \subset \bR$ is compact and is preserved by the action of $N$, it follows by properness that $N$ is compact. If every element in $G$ acts trivially, then $G = N$, and we are done. It remains to deal with the case when some elements act non-trivially. To that end, define
 \begin{equation*}
  \lambda := \inf\{ c \in (0,\infty) \mid \exists \phi \in G \text{ that acts by } x \mapsto x + c \}
 \end{equation*}
  We distinguish the cases $\lambda >0$ and $\lambda =0$, which correspond to $G = N \rtimes \bZ$ and $G = N \rtimes \bR$, respectively.
 
 \textbf{Case 1.} Suppose that $\lambda > 0$, and let $\phi_i$ be a sequence in $G$ that realizes the infimum. Because the action of $G$ on $\bR$ is proper, and $\phi_i \in G_{[0,\lambda+1]}$ for large $i$, the sequence $\phi_i$ has a convergent subsequence, whose limit $\phi$ acts precisely by $x \mapsto x+\lambda$. Let $\psi \in G$ be any element, acting by $x \mapsto x +c$ for some $c \in \bR$. Then $c$ is an integer multiple of $\lambda$; otherwise, there would be integers $k,l$ such that $c' := \lambda^k c^l \in (0, \lambda)$, and then $\phi^k \psi^l$ would act by $x \mapsto x+c'$, a contradiction. Now, for $j \in \bZ$, define
 \begin{equation*}
 \hat G_j :=\{ \phi \in G \mid \phi \text{ acts by } x \mapsto x + \lambda^j \}.
 \end{equation*}
 By the previous discussion, $G = \bigcup_j \hat G_j$. Similarly to how a Lie group decomposes into its connected components, we have that $\hat G_k = \hat G_0 \phi^k$ and hence every element in $G$ is of the form $\psi \phi^k$ for $\psi \in \hat G_0$ and  $k \in \bZ$. It follows that $G = N \rtimes \bZ$, where $N = \hat G_0$.
 
\textbf{Case 2.}  Suppose that $\lambda = 0$, and let $\phi_i$ be a sequence in $G$ that realizes the infimum. Similarly to before, by properness, the sequence $\phi_i$ has a convergent subsequence, and its limit $\phi$ acts trivially on $\bR$. Denote this subsequence again by $\phi_i$. Then $\psi_i :=\phi^{-1} \phi_i$ is a sequence of elements that act non-trivially and converge to the identity. Because the exponential map is a local diffeomorphism around the identity, it follows that there is a one-parameter subgroup $\Phi \colon \bR \to G$ such that $\Phi(s)$ acts by $x \mapsto x+s$. Let $\psi \in G$ be any element, acting by $x \mapsto x + c$ for some $c \in \bR$. Then $\psi \circ \Phi(-c)$ acts trivially, and we can write $\psi = (\psi \Phi(-c)) \Phi(c)$. It follows that $G = N \rtimes \bR$.
\end{proof}

We end this section by proving the last part of Corollary~\ref{cor:prodintro}, i.e.\ that when $\Isom^{\uparrow}(M,g) \cong \bR \ltimes N$, then the connected component of the identity splits as a direct (and not just semi-direct) product. The relevant lemma about Lie groups below can also be found in \cite[Lem.~5.3]{HMZ}. Its proof requires connectedness, for two reasons: to have a surjective exponential map, allowing us to argue with the Lie algebra, and to be able to pass to the universal covering group.

\begin{lem} \label{lem:direct}
 Consider $N$ a compact, connected Lie group, and let $G := N \rtimes_\Psi L$ with $L \cong \bR$. Then, there is a subgroup $L' \cong \bR$ of $G$ such that $G = N \times L'$.
\end{lem}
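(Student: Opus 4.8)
The plan is to descend to the Lie algebra $\mathfrak{g} = \mathfrak{n} \oplus_{d\Psi} \mathfrak{l}$ and produce a central complement there. Write $\mathfrak{l} = \bR X$ and set $D := d\Psi(X) \in \Der(\mathfrak{n})$, so the bracket on $\mathfrak{g}$ satisfies $[X,W] = D(W)$ for $W \in \mathfrak{n}$. If I can find $S \in \mathfrak{n}$ with $D = \ad_{\mathfrak{n}}(S)$ (that is, $D$ inner), then $Y := X - S$ satisfies $[Y,W] = D(W) - [S,W] = 0$ for all $W \in \mathfrak{n}$, so $\mathfrak{l}' := \bR Y$ is a one-dimensional ideal commuting with $\mathfrak{n}$, and $\mathfrak{g} = \mathfrak{n} \oplus \mathfrak{l}'$ is a direct sum of ideals. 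Everything then reduces to (a) showing $D$ is inner, and (b) integrating $\mathfrak{l}'$ to a subgroup $L' \cong \bR$ realizing $G = N \times L'$.

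For (a), I would use that $\mathfrak{n}$ is compact, hence reductive: $\mathfrak{n} = \mathfrak{z} \oplus \mathfrak{s}$, where $\mathfrak{z}$ is the abelian center and $\mathfrak{s} = [\mathfrak{n},\mathfrak{n}]$ is semisimple. Any derivation preserves both summands, so $D = D|_{\mathfrak{z}} \oplus D|_{\mathfrak{s}}$. On the semisimple part every derivation is inner, so $D|_{\mathfrak{s}} = \ad_{\mathfrak{s}}(S)$ for some $S \in \mathfrak{s}$; since $\ad_{\mathfrak{n}}(S)$ kills $\mathfrak{z}$ and restricts to $D|_{\mathfrak{s}}$ on $\mathfrak{s}$, it remains only to show $D|_{\mathfrak{z}} = 0$, which then gives $D = \ad_{\mathfrak{n}}(S)$. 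This is where I expect the \emph{main obstacle}, and where compactness together with the group-level (not merely Lie-algebra) structure is essential: $\Der(\mathfrak{z}) = \GL(\mathfrak{z})$ is far from trivial, so the vanishing cannot come from $\mathfrak{n}$ alone. Instead I would use that $\Psi$ takes values in $\Aut(N)$. The central torus $Z(N)^0$ (a torus since $N$ is compact connected, with Lie algebra $\mathfrak{z}$) is preserved by each $\Psi_t$, giving a smooth homomorphism $\bR \to \Aut(Z(N)^0) \cong \GL(k,\bZ)$ with $k = \dim\mathfrak{z}$. As the target is discrete and $\bR$ is connected, this homomorphism is trivial, so $\Psi_t$ fixes $Z(N)^0$ pointwise and hence $D|_{\mathfrak{z}} = 0$.

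For (b), I would integrate $\mathfrak{l}'$ via the one-parameter subgroup $L' := \{\exp_G(tY) : t \in \bR\}$. Because $[\mathfrak{l}',\mathfrak{n}] = 0$ and both $N$ and $L'$ are connected, their exponentials commute elementwise, so $L'$ centralizes $N$. To see $L' \cong \bR$ (and not a circle), I compose with the projection $\pi \colon G = N \rtimes L \to L \cong \bR$ with kernel $N$: the differential of $\pi|_{L'}$ sends $Y = X - S \mapsto X \neq 0$, so $\pi|_{L'}$ is a nontrivial homomorphism out of a connected one-dimensional group; a circle admits no nontrivial homomorphism to $\bR$, forcing $L' \cong \bR$ and $\pi|_{L'}$ injective. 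Injectivity yields $N \cap L' = \ker(\pi|_{L'}) = \{e\}$. Finally, the multiplication map $N \times L' \to G$ has differential $(W,tY) \mapsto W + tY$, a linear isomorphism $\mathfrak{n} \oplus \mathfrak{l}' \xrightarrow{\ \sim\ } \mathfrak{g}$; hence its image $NL'$ is an open subgroup of the connected group $G$, so $NL' = G$. With $N$ normal, $L'$ abelian and centralizing $N$, and $N \cap L' = \{e\}$, this exhibits $G = N \times L'$ as an internal direct product, as claimed.
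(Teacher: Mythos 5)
Your proof is correct, but it takes a genuinely different --- and leaner --- route than the paper's. The core algebraic move is the same in both: replace the generator $X$ of $\mathfrak{l}$ by $Y = X - S$, where $S \in \mathfrak{n}$ realizes the derivation $d\Psi(X)$ as an inner one, and integrate the resulting central line. The difference is how innerness is established. The paper argues at the group level: it first assumes $N = \bT^k \times S$, where $\Aut(N) = \Aut(\bT^k) \times \Aut(S)$ with $\Aut(\bT^k) = \GL(k,\bZ)$ discrete, uses surjectivity of $\exp$ on compact connected $S$ together with $\Der(\mathfrak{s}) = \ad(\mathfrak{s})$ to write $d\Psi(s) = \Ad(\exp(sV))$, and then handles general $N$ by a second, rather involved step through the universal covering group $(\bR^k \times S) \rtimes_\Phi L$ and quotients by discrete central subgroups. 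You instead work at the Lie algebra level with the splitting $\mathfrak{n} = \mathfrak{z} \oplus \mathfrak{s}$, which exists for \emph{every} compact Lie algebra regardless of whether $N$ itself is a direct product; innerness on $\mathfrak{s}$ is Whitehead's lemma, and your observation that $D\vert_{\mathfrak{z}} = 0$ cannot come from the Lie algebra alone, but does follow from the group-level fact that $\Aut(Z(N)^0) \cong \GL(k,\bZ)$ is discrete, is exactly the rigidity input the paper also uses --- applied at the only place it is needed. This lets you bypass the covering-group machinery entirely, since in the paper that second step exists only to reduce general $N$ to the product case. Your part \textit{(b)} is also somewhat more careful than the paper's corresponding conclusion: you check that $L'$ is a line rather than a circle (via the projection $\pi \colon G \to L$), that $N \cap L' = \{e\}$, and that $NL' = G$ by the open-subgroup argument, points the paper leaves implicit. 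What the paper's route buys is an explicit description of $G$ through its covers; what yours buys is brevity, uniformity in $N$, and no appeal to the structure theory of covering groups.
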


\begin{proof}
 We first deal with the case $N = \bT^k \times S$ for $S$ a compact, connected, and semi-simple Lie group. Since $L \cong \bR$, we may view $\Psi \colon L \to \Aut(N)$ as a one-parameter subgroup $s \mapsto \Psi(s)$. Note that $\Aut(N) = \Aut(\bT^k) \times \Aut(S)$, because every automorphism of $N$ must preserve the identity component of the center $Z^0(N)  = \bT^k$. Since $\Aut(\bT^k) = \GL(k,\bZ)$ is discrete, $\Psi(s) \in \Aut^0(S)$ for all $s \in \bR$ (extended trivially to $N$). Note that because $S$ is compact and connected, by \cite[Cor~4.48]{Knapp}, the exponential map $\exp \colon \mathfrak{s} \to S$ is surjective. Therefore, the automorphism $\Psi(s) \in \Aut(S)$ is uniquely determined by $d \Psi(s) \in \Aut(\mathfrak{s})$. Then $s \mapsto d\Psi(s)$ is a one parameter subgroup of $\Aut(\mathfrak{s})$. By \cite[Chap.~I.14]{Knapp}, the Lie algebra of $\Aut(\mathfrak{s})$ is $\Der(\mathfrak{s})= \ad(\mathfrak{s})$.
 Therefore, there exists a $V \in \mathfrak{s}$ such that
 \begin{equation*}
  \frac{d}{ds} \biggr\vert_{s=0} d\Psi(s) = \ad(V) = [V, \cdot ].
 \end{equation*}
 It follows by the above and \cite[Prop.~1.91]{Knapp} that $d\Psi(s) = e^{\ad(sV)} = \Ad(\exp(sV))$, where $e^{(\cdot)} \colon \Der(\mathfrak{s}) \to \Aut(\mathfrak{s})$ is the exponential map, and hence
 \begin{equation} \label{eq:ad1}
  \Psi(s) \exp(X) = \exp(d \Psi(s) X) = \exp(\Ad(\exp(sV)) X)
 \end{equation}
  for all $X \in \mathfrak{n}$ (note that $\exp \colon \mathfrak{n} \to N$ is surjective). On the other hand, since $L \cong \bR$, we can view $L$ as a one-parameter subgroup $s \mapsto \exp(s T)$ for $T \in \mathfrak{g}$, and then, by definition of semi-direct product,
  \begin{equation} \label{eq:ad2}
  \Psi(s) \exp(X) = \exp(sT) \exp(X) \exp(-sT) = \exp(\Ad(\exp(sT)) X).
 \end{equation}
 Because $\exp$ restricted to a neighborhood of the identity is injective, \eqref{eq:ad1} and \eqref{eq:ad2} together imply that $\Ad(\exp(sV)) = \Ad(\exp(sT))$. Hence, the differential $\ad$ of $\Ad$ satisfies $\ad(V) = \ad(T)$, which implies $\ad(T-V) = 0$. Let $L'$ be the one-parameter subgroup $s \mapsto \exp (sT')$, where $T' = T-V$. Then $\Ad(\exp(sT')) = 0$ for all $s$. In other words, elements of $L'$ commute with elements of $N$. Moreover, $\mathfrak{g} = \mathfrak{n} \oplus \mathfrak{l}'$. It follows that $G = N \times L'$, as desired.

 It remains to deal with the case of general $N$. We do this by passing to a covering of the form dealt with in the previous case. By \cite[Prop.~1.124]{Knapp} and \cite[Cor.~4.25]{Knapp}, the Lie algebra $\mathfrak{g}$ of $G$ is of the form
 \begin{equation*}
  \mathfrak{g} = (\bR^k \oplus \mathfrak{s}) \oplus_\psi \mathfrak{l}
 \end{equation*}
 where $\psi$ is the differential of $\Psi$, and $\mathfrak{s}$ is a compact semi-simple Lie algebra. Therefore, there is a compact, simply connected, semi-simple Lie group $S$ with Lie algebra $\mathfrak{s}$. Then $\bR^k \times S$ has Lie algebra $\bR^k \oplus \mathfrak{s}$. By \cite[Thm.~1.125]{Knapp}, there exists a unique map $\Phi \colon L \to \Aut(\bR^k \times S)$ such that
 \begin{equation*}
  \tilde{G} = (\bR^k \times S) \rtimes_\Phi L
 \end{equation*}
 has Lie algebra equal to $\mathfrak{g}$. Because $\tilde{G}$ is simply connected, by \cite[Prop.~1.100]{Knapp}, $\tilde{G}$ must be the universal covering group of $G$. Hence $G = \tilde{G} / \Gamma$ for $\Gamma \subset \tilde{G}$ a discrete central subgroup \cite[Prop.~1.101]{Knapp}. Note that either $L \subset Z(\tilde G)$ (if $\psi$ and hence $\Phi$ are trivial) or $L \cap Z(\tilde G) = \{e\}$ (else). In the first case, we have nothing to prove, so assume that we are in the second one, implying $\Gamma \cap L = \{e\}$. Let $e_S \in S$, $e_L \in L$ denote the identity element, define
 \begin{equation*}
  \hat \Gamma := \Gamma \cap \left( \bR^k \times \{e_S\} \times \{e_L\} \right), \qquad \hat G = \tilde G / \hat \Gamma,
 \end{equation*}
 and denote by $\pi \colon \tilde G \to \hat G$ the quotient map. Moreover, let $p \colon \tilde G \to S$ denote the natural projection. Then $p(\Gamma)$ is a discrete subgroup of $S$, and hence, by compactness of $S$, $p(\Gamma)$ must be finite. Since $\Gamma$ is (with slight abuse of notation) a subgroup of $\hat \Gamma \times p(\Gamma) \times \{e_L\}$, it follows that $\pi(\Gamma) \subset p(\Gamma)$, and hence $\pi(\Gamma)$ is also finite. Therefore
 \begin{equation*}
  \hat G \cong (\bT^k \times S)  \rtimes_\Phi L,
 \end{equation*}
 since otherwise
 \begin{equation*}
  G = \tilde G / \Gamma = \hat G / \pi(\Gamma)
 \end{equation*}
 could not be homeomorphic to $N \times \bR$ for $N$ compact.

 By the first part of the proof, there is an isomorphism $F \colon \hat G \to \bT^k \times S \times L'$, which restricts to the identity on $\bT^k \times S$. Hence $F(\pi(\Gamma)) \cap L' = \{e\}$, and $F(\hat G) / F( \pi (\Gamma)) \cong N \times L'$. Moreover, the isomorphism $F$ descends to the quotients, forming a commutative diagram
 \begin{equation*}
  \begin{tikzcd}
   \hat G \arrow{r}{F} \arrow[swap]{d}{} & F(G) \arrow{d}{} \\%
   G \arrow{r}{}& N \times L'
   \end{tikzcd}
 \end{equation*}
where the vertical arrows are the quotient maps, and the lower horizontal arrow is the isomorphism that proves the lemma.
\end{proof}

\subsection{Examples}

The simplest class of spacetimes where our theorem applies are product spacetimes.

\begin{prop} \label{prop:prod}
 Let $(\Sigma,h)$ be a compact Riemannian manifold, and equip $M = \bR \times \Sigma$ with the product Lorentzian metric $g = -dt^2 + h$. Then
 \begin{equation} \label{eq:Isomprod}
  \Isom^\uparrow(M,g) \cong \bR \times \Isom(\Sigma,h),
 \end{equation}
 where the first factor acts on $M$ by time translations $(t,x) \mapsto (t+c,x)$.
\end{prop}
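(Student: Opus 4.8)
The plan is to prove the two inclusions in \eqref{eq:Isomprod} separately, the nontrivial content being surjectivity of the natural map $\bR \times \Isom(\Sigma,h) \to \Isom^\uparrow(M,g)$. First I would record that the projection $t \colon M \to \bR$ is a Cauchy temporal function with compact level sets $\Sigma_{t_0} := \{t_0\} \times \Sigma$, so that $(M,g)$ is globally hyperbolic with compact Cauchy surfaces; the NFOH (indeed the full NOH) then follows from Lemma~\ref{lem:NOHCauchy}, since for any $s$ and any $t > s + \diam(\Sigma,h)$ the slices $\Sigma_s$ and $\Sigma_t$ are totally timelike connected (the time lapse exceeds the $h$-distance between any two points). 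Thus Theorems~\ref{thm:Isomproper} and~\ref{thm:time} apply to $G := \Isom^\uparrow(M,g)$. The inclusion ``$\supseteq$'' is immediate: for $c \in \bR$ and $\phi \in \Isom(\Sigma,h)$ the map $(t,x) \mapsto (t+c,\phi(x))$ preserves $-dt^2$ and $h$ separately, hence $g$, and sends $\partial_t$ to $\partial_t$, so it lies in $G$; this gives an injective homomorphism, and time translations visibly commute with spatial isometries.

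For surjectivity, the key claim is that every $\psi \in G$ satisfies $\psi_* \partial_t = \partial_t$. Since $g$ is a product metric, $\partial_t$ is a parallel, unit, future-directed timelike Killing field, and hence so is $W := \psi_* \partial_t = \Ad(\psi)\partial_t$. Writing $a := -g(W,\partial_t)$, which is constant because $W$ and $\partial_t$ are parallel, I would decompose $W = a\partial_t + X$ with $X := W - a\partial_t$ parallel, tangent to the slices, and of constant norm $g(X,X) = a^2 - 1 \geq 0$; moreover $a \geq 1$, with equality iff $X = 0$. Here is where the global input enters. By Theorem~\ref{thm:time}(ii) there is a homomorphism $c \colon G \to (\bR,+)$, $\psi \mapsto c_\psi$, recording the induced translation on the invariant temporal function; it is conjugation invariant because $\bR$ is abelian, and $c_{T_s} = \alpha s$ with $\alpha > 0$ for the time-translation flow $T_s = \exp(s\partial_t)$, since $T_s$ moves strictly toward the future. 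As $\partial_t$ and $X$ commute, the flow of $W$ factors as $\exp(sW) = T_{as} \circ \exp(sX)$, whence, using conjugation invariance $c_{\exp(sW)} = c_{\psi T_s \psi^{-1}} = c_{T_s}$ on the left,
\begin{equation*}
 \alpha s = c_{\exp(s W)} = c_{T_{as}} + c_{\exp(s X)} = a\alpha s + c_{\exp(sX)}.
\end{equation*}
Finally $c_{\exp(sX)} = 0$: the flow of $X$ preserves each compact slice $\Sigma_{t_0}$, so $\tau$ stays bounded along the orbit $s \mapsto \exp(sX)p$, while the linear function $s \mapsto c_{\exp(sX)}$ would otherwise be unbounded. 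Therefore $a = 1$, giving $X = 0$ and $W = \partial_t$.

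Once $\psi_* \partial_t = \partial_t$ for all $\psi \in G$, the conclusion is routine. A direct computation using $\psi^* g = g$ and $\psi_*\partial_t = \partial_t$ gives $\psi^* dt = dt$, so $t \circ \psi = t + c$ for a constant $c = c(\psi)$; and commuting with the flow $T_s$ forces the spatial part to be $t$-independent, i.e.\ $\psi(t,x) = (t + c, \phi(x))$ for a fixed map $\phi \colon \Sigma \to \Sigma$. Restricting $\psi$ to the totally geodesic spacelike slice $\Sigma_0 \to \Sigma_c$ and using that $g$ induces $h$ on each slice shows $\phi \in \Isom(\Sigma,h)$. This exhibits $\psi$ as a time translation composed with a spatial isometry, proving surjectivity and the identification \eqref{eq:Isomprod} as a direct product.

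I expect the main obstacle to be precisely the step forcing $a = 1$: infinitesimally there are ``boost'' or ``screw'' Killing fields mixing $\partial_t$ with any parallel direction of $\Sigma$, so the parallel unit timelike field $W$ is genuinely non-unique when $\Sigma$ has flat factors (e.g.\ $\Sigma = S^1$), and such fields are excluded only by a global argument. The translation-number homomorphism together with compactness of the slices is exactly the device that converts this global obstruction into the clean identity $a\alpha s = \alpha s$.
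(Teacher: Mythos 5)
Your proof is correct, and it takes a genuinely different route to the crucial step than the paper does. Both arguments reduce everything to showing that each $\psi \in \Isom^\uparrow(M,g)$ satisfies $\psi_*\partial_t = \partial_t$, and both must exclude boost-like behavior by a global argument resting on compactness of $\Sigma$; but the mechanisms differ. The paper argues metrically: the image of a vertical line under $\psi$ is a product geodesic $s \mapsto (as, x(s))$, and comparing the Lorentzian distance between points $\sigma(n_k)$, $\sigma(n_l)$ (computed along a subsequence with $x(n_k)$ convergent in the compact $\Sigma$) with the maximizing property of $\sigma$ forces $a = 1$ and $\dot x \equiv 0$, so vertical lines, hence $\partial_t$, are preserved. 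You argue algebraically: you decompose the parallel unit timelike field $W = \psi_*\partial_t$ as $a\partial_t + X$ and rule out $a > 1$ with the translation homomorphism $c$ of Theorem~\ref{thm:time}\textit{(ii)}, using its conjugation invariance, the flow factorization $\exp(sW) = T_{as}\circ \exp(sX)$, and boundedness of the invariant temporal function on the compact slices preserved by the flow of $X$; all the ingredients there (parallelism of $W$, constancy of $a$, completeness and Killing property of $X$, additivity and boundedness of $s \mapsto c_{\exp(sX)}$) check out. One structural difference is worth noting: the paper's key step is elementary Lorentzian geometry, with the machinery of Section~\ref{sec:Isomproper} (Theorem~\ref{thm:prod}) invoked only afterwards to assemble the splitting, whereas your key step already leans on Theorems~\ref{thm:Isomproper} and~\ref{thm:time} (properness and the invariant $d\tau$). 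What your route buys in exchange: it avoids the geodesic/distance computation, it pinpoints exactly which local obstruction (parallel screw fields, present whenever $\Sigma$ has flat factors) is being excluded globally, and your endgame --- $\psi^* dt = dt$, then commutation with $T_s$ forcing $t$-independence of the spatial part --- is a short direct verification instead of a second appeal to the structure theorem. Both proofs then obtain the direct, not merely semi-direct, product \eqref{eq:Isomprod} because time translations commute with spatial isometries.
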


\begin{proof}
 Clearly, the right hand side of \eqref{eq:Isomprod} is at least a subgroup of $\Isom^\uparrow(M,g)$. We need to prove that there are no additional isometries. Let $\phi \in \Isom^\uparrow(M,g)$ be arbitrary.

 \textbf{Claim.} \textit{For every $x \in \Sigma$, the vertical line $\bR \times \{x\} \subset M$ is preserved by $\phi$.}
 \newline \noindent This is because the vertical lines maximize the distance between all of their points, and are unique with this property, hence must be preserved. Indeed, denote by $\gamma \colon \bR \to M,\ s \mapsto (s,x)$ a parametrized vertical line, and by $\sigma := \phi \circ \gamma$, $\sigma(s) = (t(s),x(s))$ its composition with $\phi$. Because $\phi$ is an isometry and $\gamma$ is a unit-speed geodesic, also $\sigma$ is a unit-speed geodesic. By standard properties of geodesics in semi-Riemannian products, it follows that $\sigma(s) = (a s, x(s))$ for a constant $a > 0$ and a geodesic $x \colon \bR \to \Sigma$ in $(\Sigma,h)$ of constant speed $b = h(\dot x, \dot x)$. Thus
 \begin{equation*}
  1 = g(\dot \sigma, \dot \sigma) = -a^2 + b^2.
 \end{equation*}
 Now consider the sequence of points $p_n := \sigma(n) \in M$ for $n \in \bN$. By compactness of $\Sigma$, a subsequence $(x_{n_k})_k$ of $x_n := x(n) \in \Sigma$ converges to a point $x_\infty \in \Sigma$. Thus, the Lorentzian distance satisfies
 \begin{equation*}
  d_g(p_{n_k},p_{n_l}) = \sqrt{a^2(n_l-n_k)^2- d_h(x_{n_k},x_{n_l})} \longrightarrow \vert n_l-n_k \vert a \quad \text{as } k,l \to \infty,
 \end{equation*}
 where $d_h$ is the Riemannian distance on $\Sigma$. But since $\sigma$ maximizes the Lorentzian length $L_g$,
 \begin{equation*}
  d_g(p_{n_k},p_{n_l}) = L_g(\sigma \vert_{[n_k,n_l]}) = \vert n_l - n_k \vert.
 \end{equation*}
 It follows that $a=1$ and $b = 0$, and hence $x(s)$ is constant, proving the claim.

 To conclude, from the above claim it follows that the coordinate vector field $\partial_t$ is preserved by $\phi$, since it is normalized and coincides with the tangent vector field to the vertical lines. Thus also $g(\partial_t, \cdot) = dt$ is preserved. The proof of Corollary~\ref{cor:prodintro} applied to this specific choice of temporal function then tells us that
 \begin{equation*}
  \Isom^\uparrow(M,g) = L \ltimes N,
 \end{equation*}
 where $N$ is the subgroup of isometries that preserve $t$. Hence $N$ acts by a Riemannian isometry on each slice $\{t\} \times \Sigma$. But because the vertical lines are preserved, $N$ must act by the same isometry on every slice. Therefore $N \cong \Isom(\Sigma,h)$. Moreover, by Theorem~\ref{thm:time}, every $\phi \in \Isom^\uparrow(M,g)$ acts by a translation $t \mapsto t +c$ on the $\bR$ factor in $M = \bR \times \Sigma$, and we can write $\phi = \varphi_c \circ \psi$, for $\varphi_c \colon (t,x) \mapsto (t+c,x)$ and $\psi = \varphi_c^{-1} \circ \phi \in N$. Since all such time-translations $\varphi_c$ are isometries of $(M,g)$, and commute with all spatial isometries, it follows that $\Isom^\uparrow(M,g) \cong \bR \times \Isom(\Sigma,h)$.
\end{proof}

In the above proof, compactness of $\Sigma$ is essential. Indeed, Minkowski spacetime $\bR^{1,n}$ is also a product spacetime, but its isometry group, the Poincar\'e group, includes boosts (which mix space and time), and hence is larger than $\bR \times \OG(n)$. Note also that product spacetimes with compact slices always satisfy the NOH. An example where the NOH is violated and the conclusion of Corollary~\ref{cor:prodintro} fails, is given by de Sitter spacetime $\dS^4$. On the one hand, $\dS^4$ can be represented as the warped product $\dS^4 = \bR^{1,0} \times_{\cosh^2} S^3$, implying that it is globally hyperbolic with compact Cauchy surfaces \cite[pp.~183--184]{BEE}. On the other hand, $\dS^4$ can also be represented as the pseudosphere in Minkowski spacetime $\bR^{1,4}$. It follows that its isometry group is the Lorentz group $\OG^\uparrow(1,4)$ \cite[Prop.~9.8]{ONeill}, which, again because it contains boosts, cannot be written in the form $\bR \ltimes N$ with $N$ compact. While the obvious examples of spacetimes with isometry group $\bR \ltimes N$ are the ones that admit a complete timelike Killing vector field (such as $\partial_t$ in the case of a product spacetime), the following example shows that they are not the only ones.

\begin{exam}[{Figure \ref{fig:spiral}}] \label{exam:spiral}
 Consider on $\bR^2$ the Lorentzian metric $g := f(y) dx^2 - dy^2$ for some function $f \colon \bR \to [0,1)$ with $f(y) = f(y+1)$. Let $M$ be the quotient of $\bR^2$ by the map $(x,y) \mapsto (x+1,y+1)$. The metric $g$ descends to the quotient, which we can model as $\bR \times [0,1]$ with the identifications $(0,y) \sim (1,y+1)$. Hence $M$ is homeomorphic to a cylinder. Because $f<1$, there are no closed causal curves, and in fact, $(M,g)$ is globally hyperbolic with $\{(s,s) \mid s \in [0,1] \} / \sim$ being a compact Cauchy surface. Moreover, $(M,g)$ satisfies the NOH. The coordinate vector field $\partial_x$ is complete, spacelike and Killing, with integral curves escaping to infinity. For generic $f$, it is the only Killing vector field; hence $\Isom^{\uparrow,0}(M,g) \cong \bR$ even though $(M,g)$ does not admit a timelike Killing vector field.
\end{exam}

\begin{figure}
\centering
\begin{minipage}{.5\textwidth}
  \centering
  \includegraphics[height=5cm]{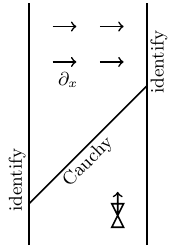}
  \caption{The spacetime of Example \ref{exam:spiral}.}
  \label{fig:spiral}
\end{minipage}%
\begin{minipage}{.5\textwidth}
  \centering
  \includegraphics[height=5cm]{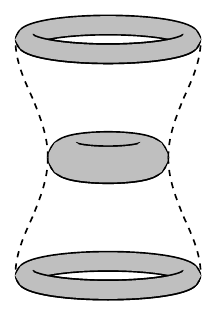}
  \caption{The spacetime of Example \ref{exam:semi}.}
  \label{fig:semi}
\end{minipage}
\end{figure}

Next, we provide a concrete example where $\Isom^\uparrow(M,g) \cong \bZ \ltimes N$. This is generally to be expected when there is some periodic dependence of the metric tensor on a time coordinate.

\begin{exam}[{Figure \ref{fig:semi}}] \label{exam:semi}
 Consider $M := \bR \times S^1 \times S^1$ with metric tensor $g:=-dt^2+ h_t$, where
 \begin{equation*}
  h_t := (\cos^2(t)+1) d\theta^2 + (\sin^2(t)+1)d\varphi^2.
 \end{equation*}
 Note that the level sets of the $t$-coordinate are compact Cauchy surfaces, and that the periodic dependence of $g$ in $t$ implies that $(M,g)$ satisfies the NOH \cite[Thm.~1.6]{GarBoundary}. We are going to show that the subgroup of $\Isom^\uparrow(M,g)$ preserving an invariant time function is
 \begin{equation*}
  N = \OG(2) \times \OG(2),
 \end{equation*}
 where each factor $\OG(2)$ acts by symmetries of one of the circles $S^1$ (e.g.\ rotations and reflections in the $\theta$ or $\varphi$ coordinates). Clearly, these are isometries of $(M,g)$. Let $\Sigma_t := \{t\} \times S^1 \times S^1$. For $t \neq k \pi/2 + \pi/4$,
 \begin{equation*}
  \Isom(\Sigma_t, h_t) = \OG(2) \times \OG(2).
 \end{equation*}
 We can see this because either $\{t\} \times \{\theta\} \times S^1$ or $\{t\} \times S^1 \times \{\varphi\}$ is the image of a closed geodesic of minimal length, for all $\theta$ or all $\varphi$, respectively. Thus, an isometry of $(\Sigma_t,h_t)$ must preserve the splitting of $\Sigma_t$, and hence act on each $S^1$ factor separately.

 By Theorem~\ref{thm:time}, there is a Cauchy temporal function $\tau$ such that $\Isom^\uparrow(M,g)$ acts by translations in $\tau$. Clearly, elements of $\OG(2) \times \OG(2)$ must act by the trivial translation, since otherwise, they would have to send points to infinity, which they cannot. The orbit of each point $p = (t,\theta,\varphi)$ is
 \begin{equation*}
  \{ \phi(p) \mid \phi \in N \} = \Sigma_t.
 \end{equation*}
 Thus, all points on $\Sigma_t$ must lie on the same level set of $\tau$. But since the $\Sigma_t$ are Cauchy surfaces, they must exactly coincide with the $\tau$-level sets, as a proper subset of a Cauchy surface cannot be a Cauchy surface. Moreover, the normal vector field to $\Sigma_t$ is preserved by isometries, and hence an element $\phi \in N$ must restrict to the same map on each $\Sigma_t$. From this and the previous paragraph, it follows that indeed $N$ cannot be larger than $\OG(2) \times \OG(2)$, since $N$ must restrict to an isometry on each level-set $(\Sigma_t,h_t)$.

 Furthermore, let $\phi \colon M \to M$ be the isometry given by
 \begin{equation*}
  \phi(t,\theta,\varphi) := \left(t+\frac{\pi}{2},\varphi,\theta\right),
 \end{equation*}
 i.e., a discrete time translation combined with an interchange of $\theta$ and $\varphi$. Clearly, $\phi$ must act by non-trivial translations of the invariant temporal function $\tau$. We claim that $\{\phi\} \cup N$ generates $\Isom^\uparrow(M,g)$. Suppose not. Then there is an isometry $\psi$ which shifts $\tau$ by a smaller constant than $\phi$ does. Therefore, there must be a point $p \in M$ such that $t(\psi(p)) - t(p) < \frac{\pi}{2}$. But then
 \begin{equation*}
  \psi \vert_{\Sigma_{t(p)}} \colon \Sigma_{t(p)} \longrightarrow \Sigma_{t(\phi(p))}
 \end{equation*}
 is a Riemannian isometry, which is impossible, because the {smaller radius of each torus} $(\Sigma_t,h_t)$ is an invariant, and in this case, they do not match.

 Finally, we argue that $\Isom^\uparrow(M,g) = L \ltimes N$, where $L$ is generated by $\phi$, is a semi-direct product, and is not isomorphic to $\bZ \times N$. Clearly, $\phi$ does not commute with all elements in $N$. Concretely, if $\chi = (\chi_1,\chi_2) \in \OG(2) \times \OG(2)$, then
 \begin{equation*}
  \phi \circ \chi \circ \phi^{-1} = (\chi_2, \chi_1)
 \end{equation*}
 Suppose that there is another isometry $\psi$ such that $\Isom^\uparrow(M,g) = L' \times N$ for $L'$ generated by $\psi$. Necessarily, $\psi = \phi \circ \eta$ for $\eta \in N$. Then
 \begin{equation*}
  \psi \circ \chi \circ \psi^{-1} = \phi \circ \eta \circ \chi \circ \eta^{-1} \circ \phi^{-1}= (\eta_2 \circ \chi_2 \circ \eta_2^{-1}, \eta_1 \circ \chi_1 \circ \eta_1^{-1}).
 \end{equation*}
 One can always find $\chi_1$, $\chi_2$ such that the right hand side does not equal $(\chi_1,\chi_2)$. For instance, take $\chi_1 = e$ and $\chi_2 \neq e$. Hence $\psi$ does not commute with all elements of $N$, a contradiction.
\end{exam}

Example~\ref{exam:semi} not only shows that parts of the conclusion of Corollary~\ref{cor:timeIntro} are sharp. It also shows that our results are, in fact, useful in order to compute isometry groups. Starting from a spacetime with a temporal function invariant under a large group of isometries, we were able to conclude that, in fact, those are all isometries. Compare this with the similar Example~5.5 in \cite{GarBoundary}, where a computation in coordinates was used to show that all Killing vector fields are tangential to the level sets $\Sigma_t$. The following lemma isolates the argument in the second paragraph of Example~\ref{exam:semi}, which does not depend on the specific form of the metric.

\begin{lem} \label{lem:invtime}
 Let $(M,g)$ be a causal spacetime satisfying the NFOH, and let $\tau \colon M \to \bR$ be a Cauchy temporal function. Suppose $G$ is a group acting by isometries on $(M,g)$ such that, for every $t \in \bR$ and one (equivalently, all) points $p \in \Sigma_t$, the orbit $G(p) := \{\phi(p) \mid \phi \in G\}$ equals $\Sigma_t$. Then
 \begin{enumerate}
  \item There is a diffeomorphism $f \colon \bR \to \bR$ such that $d(f \circ \tau)$ is invariant under $\Isom^\uparrow(M,g)$.
  \item The foliation by level sets of $\tau$, and the complementary normal foliation, are preserved by $\Isom^\uparrow(M,g)$.
  \item For $N$ as in Theorem~\ref{thm:prod}, $\phi \in N$ and $t \in \bR$, $\phi$ is uniquely determined by $\phi \vert_{\Sigma_t} \in \Isom(\Sigma_t,g\vert_{\Sigma_t})$.
 \end{enumerate}
\end{lem}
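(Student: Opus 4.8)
The plan is to build a single $\Isom^\uparrow(M,g)$-invariant Cauchy temporal function $\sigma$ and to show that it is, up to reparametrization, a function of $\tau$; the three claims then follow almost formally. Since $\Isom^\uparrow(M,g)$ acts properly by Theorem~\ref{thm:Isomproper}, Theorem~\ref{thm:time} applied to the full group $\Isom^\uparrow(M,g)$ furnishes such a $\sigma$ with $d\sigma$ invariant and an induced translation action $\sigma \circ \phi = \sigma + c_\phi$. (I take $G \subseteq \Isom^\uparrow(M,g)$, as in the examples.) The crux is to prove that $\sigma$ is constant on each orbit $\Sigma_t = \tau^{-1}(t)$. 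Each $\phi \in G$ maps a point $p \in \Sigma_t$ into the orbit $G(p) = \Sigma_t$, so it preserves $\Sigma_t$ setwise; since $\Sigma_t$ is a compact Cauchy surface by Proposition~\ref{prop:NFOHCauchy}, all iterates $\phi^n(p)$ remain in the compact set $\Sigma_t$. On the other hand $\sigma(\phi^n(p)) = \sigma(p) + n c_\phi$, and $\sigma$ is proper (its level sets are compact and $M \cong \bR \times \Sigma'$ with $\sigma$ the projection, by Theorem~\ref{thm:split}), so $c_\phi \neq 0$ would force $\phi^n(p)$ to leave every compact set. Hence $c_\phi = 0$ and $\sigma \circ \phi = \sigma$ for every $\phi \in G$; transitivity of $G$ on $\Sigma_t$ then shows $\sigma$ is constant on $\Sigma_t$, so the $\sigma$-level sets coincide with the $\tau$-level sets.

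To finish (i), I would note that $\sigma$, being constant on the fibers of the submersion $\tau$, factors as $\sigma = f \circ \tau$ for a smooth $f \colon \bR \to \bR$. Comparing $d\sigma = f'(\tau)\, d\tau$, two future-directed timelike covectors, gives $f' > 0$, and surjectivity of both Cauchy temporal functions makes $f$ a diffeomorphism of $\bR$. Then $d(f \circ \tau) = d\sigma$ is $\Isom^\uparrow(M,g)$-invariant, which is exactly (i).

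For (ii), invariance of $d\sigma$ means $\sigma$ changes only by additive constants under isometries, so the collection of $\sigma$-level sets---equivalently the $\tau$-foliation---is permuted among itself; and since an isometry preserves both $g$ and $d\sigma$, it preserves the $g$-gradient $\nabla \sigma$, whose integral curves are precisely the normal foliation. For (iii), recall that $N$ (as in Theorem~\ref{thm:prod}) is the subgroup fixing the invariant temporal function (its value, not just its differential), so each $\phi \in N$ preserves every level set, preserves every normal line by (ii), and satisfies $\sigma \circ \phi = \sigma$. Writing any $q \in M$ as the unique point at height $\sigma(q)$ on the normal line that meets $\Sigma_t$ in a single point $r_t(q)$ (a single point because that line is a timelike curve crossing the Cauchy surface $\Sigma_t$ exactly once), one gets $r_t(\phi(q)) = \phi|_{\Sigma_t}(r_t(q))$ and $\sigma(\phi(q)) = \sigma(q)$; thus $\phi(q)$ is completely determined by $\phi|_{\Sigma_t}$, and two elements of $N$ that agree on $\Sigma_t$ must coincide.

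The main obstacle is the constancy claim in the first paragraph, i.e.\ showing $c_\phi = 0$ for $\phi \in G$ and deducing that $\sigma$ is constant on the orbits $\Sigma_t$: this is where the compactness of the Cauchy surfaces (Proposition~\ref{prop:NFOHCauchy}) and the resulting properness of $\sigma$ are genuinely used, and it is the step that converts the transitivity hypothesis on $G$ into rigidity of the whole group $\Isom^\uparrow(M,g)$. Once the two temporal functions are shown to share their level sets, parts (i)--(iii) are essentially bookkeeping with the product splitting and the normal foliation.
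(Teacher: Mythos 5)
Your proof is correct and is essentially the argument the paper intends: the paper gives no separate proof of Lemma~\ref{lem:invtime}, stating instead that it ``isolates the argument in the second paragraph of Example~\ref{exam:semi}'', and that argument is exactly your scheme --- take the invariant temporal function $\sigma$ furnished by Theorem~\ref{thm:time} applied to the properly acting group $\Isom^\uparrow(M,g)$, observe that elements of $G$ must act by the trivial translation since their orbits lie in the compact Cauchy surfaces $\Sigma_t$, conclude that the $\sigma$-level sets coincide with the $\tau$-level sets, and use preservation of the normal (gradient) direction for \textit{(ii)} and \textit{(iii)}. Your write-up simply supplies details the paper leaves implicit (the factorization $\sigma = f \circ \tau$ with $f' > 0$, the iterates $\phi^n$ against properness of $\sigma$, and the parametrization of normal leaves by $\sigma$-values), so no discrepancy of approach or gap is present.
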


A large class of examples with isometry group $\bZ \ltimes N$ can be obtained through the following construction.

\begin{exam}
 Let $k \geq 2$ and let $\beta \colon \bR \to \GL(k,\bR),\ t \mapsto \beta_t$ be a one-parameter subgroup such that $\beta_1(\bZ^k) = \bZ^k$ but $\beta_1 \neq \operatorname{Id}$. On $\bR \times \bR^k$, consider the equivalence relation
 \begin{equation*}
  (t,v) \sim (t, v') \colon \iff v - v' \in \beta_t(\bZ^k),
 \end{equation*}
 and let $M := (\bR \times \bR^k)/\sim$ be the quotient manifold. The torus $\bT^k = \bR^k / \bZ^k$ acts on $M$ by
 \begin{equation} \label{eq:torusacts}
  u(t,v) = (t, v + \beta_t(u)),
 \end{equation}
 for $u \in \bT^k$, and $\bZ$ acts on $M$ by
 \begin{equation*}
  n(t,v) = (t+n, v).
 \end{equation*}
 for $n \in \bZ$. The $\bT^k$-action is clearly compatible with the equivalence relation $\sim$, and so is the $\bZ$-action, since
 \begin{equation*}
  n(t,v + \beta_t(z)) = (t+n,v + \beta_{t+n}(\beta_{-n}(z))) \sim (t+n,v)
 \end{equation*}
 for all $z \in \bZ^k$, given that, by assumption, $\beta_{-1}(z) \in \bZ^k$. Combining the two actions, we obtain an action of the group $G := \bZ \ltimes_{\beta} \bT^k$ (which is well-defined because $\beta\vert_\bZ$ preserves the integer lattice, and hence defines an automorphism of $\bT^k$). Indeed,
 \begin{equation*}
  un(t,v) = u(t+n,v) = (t+n,v + \beta_{t+n}(u)) = n(t,v+\beta_{t}(\beta_n(u))) = n \beta_n(u) (t,v).
 \end{equation*}

 Because the action of $G$ on $M$ is proper, there exists a $G$-invariant Riemannian metric $g_R$ on $M$. Since $G$ also preserves the foliation $\Sigma$ by $t$-level sets $\Sigma_t$, there is a $G$-invariant splitting of the tangent bundle $TM = T\Sigma \oplus (T\Sigma)^\perp$, where $\perp$ denotes the orthogonal complement with respect to $g_R$. We denote the corresponding orthogonal projection onto $T\Sigma$ by $\pi \colon TM \to T\Sigma$. Moreover, the one-form $dt$ is $G$-invariant, and on each slice $\Sigma_t \cong \bR^n / \beta_t(\bZ_n)$, the Euclidean metric $h_t$ is $G$-invariant as well. Hence
 \begin{equation*}
  g := - dt^2 + \pi^* h_t
 \end{equation*}
 defines a $G$-invariant Lorentzian metric on $M$. As in the previous example, the level sets $\Sigma_t$ are compact Cauchy surfaces, and the periodic dependence of $g$ in $t$ (due to $G$-invariance) implies that $(M,g)$ satisfies the NOH \cite[Thm.~1.6]{GarBoundary}.

 By Lemma~\ref{lem:invtime}, the foliation by $t$-level sets is invariant under $\Isom^\uparrow(M,g)$, and every $\phi \in N$ is uniquely determined by $\phi \vert_{\Sigma_t}$, i.e.\ it acts via the same map on each slice $\Sigma_t$. Since $(\Sigma_t,h_t)\cong \bR^n / \beta_t(\bZ_n)$ is a flat torus, it follows that
 \begin{equation*}
  \Isom(\Sigma_0,h_0) = \bT^k \rtimes F_t,
 \end{equation*}
 where $\bT^k$ acts via \eqref{eq:torusacts}, and  $F_t$ is the finite subgroup of $\OG(k)$ preserving the lattice $\beta_t(\bZ^k)$. Hence
 \begin{equation*}
  N = \bT^k \rtimes F, \quad \text{where } F = \bigcap_{t \in \bR} F_t.
 \end{equation*}
 For a generic choice of $\beta_t$, $F = \{\pm \operatorname{Id}\}$. It follows that $N$ is abelian, and $\Isom^\uparrow(M,g) = \bZ \ltimes N$ cannot be isomorphic to $G' := \bZ \times N$, for the following reason: On the one hand, the center $Z(G')$ contains $\bZ$ as a subgroup. On the other hand, $Z(G)$ is the subgroup of elements in $N$ that commute with $\bZ$, hence compact.
\end{exam}

It remains open if there exist spacetimes $(M,g)$ with $\Isom^\uparrow(M,g) = \bR \ltimes N$ not isomorphic to a direct product. Due to the last part of Corollary~\ref{cor:prodintro}, this is only possible if $N$ has more than one connected component.

\section{Further results} \label{sec:extra}

In this section, we prove a result on uniform Lipschitz continuity of isometries, and give an application thereof to spacetimes with regular cosmological time function (where we do not require the NFOH).

\subsection{Uniform Lipschitzianity of isometries}

In complex analysis, Cauchy's formula
\begin{equation*}
 f^{(n)}(a) = \frac{n!}{2\pi i} \oint_\gamma \frac{f(z)}{(z-a)^{n+1}} dz
\end{equation*}
allows one to compute the derivatives of a holomorphic function by integrating the function over a closed curve. In a similar spirit, we prove a result that establishes a bound on the differential $d\phi$ of an isometry $\phi$ from knowledge of only $\phi$ itself.

Recall that a spacetime $(M,g)$ is called non-totally imprisoning if every (future or past) inextendible causal curve leaves every compact set (this condition is strictly weaker than global hyperbolicity). Furthermore, recall that we can define Lipschitz continuity on a manifold $M$ by choosing an auxiliary complete Riemannian metric on $M$, the specific choice thereof being irrelevant (although the value of the Lipschitz constant does depend on it). As is the case in $\bR^n$, a smooth map $M \to M$ is Lipschitz if and only if its differential is bounded by a constant. We say that an invertible map is bi-Lipschitz if itself and its inverse are both Lipschitz.

\begin{thm} \label{thm:biLip}
 Let $(M,g)$ be a non-totally imprisoning spacetime, and let $U \subset M$ be an open subset. Suppose that $G \subset \Isom^\uparrow(M,g)$ is a subset of isometries such that there exists a compact $K' \subset M$ with $\phi(p) \in K'$ for all $p \in U$ and all $\phi \in G$. Then $G$ is uniformly Lipschitz on every compact $K \subset U$. If $G$ is a subgroup, it is uniformly bi-Lipschitz.
\end{thm}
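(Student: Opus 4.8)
The plan is to run the frame-degeneration argument from the proof of Theorem~\ref{thm:Isomproper}, but to replace the use of the NFOH by the non-total imprisonment hypothesis. Fix an auxiliary complete Riemannian metric $g_R$ and measure operator norms of differentials with respect to it; recall that, in the sense used in the paper, uniform Lipschitzianity of $G$ on $K$ amounts to a uniform bound on $\|d_p\phi\|$ for $p \in K$, $\phi \in G$. Arguing by contradiction, suppose no such bound exists on some compact $K \subset U$. Then there are $\phi_i \in G$, $p_i \in K$ and $g_R$-unit vectors $v_i \in T_{p_i}M$ with $\|d_{p_i}\phi_i(v_i)\|_{g_R} \to \infty$. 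Passing to subsequences, $p_i \to p \in K \subset U$, $v_i \to v$ (a $g_R$-unit vector at $p$), and, since $\phi_i(p_i) \in K'$, also $\phi_i(p_i) \to q \in K'$. Put $c_i := \|d_{p_i}\phi_i(v_i)\|_{g_R}^{-1} \to 0$ and $w_i := c_i\,d_{p_i}\phi_i(v_i)$; after a further subsequence the $g_R$-unit vectors $w_i$ converge to a $g_R$-unit vector $w$ based at $q$.

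The first point is that $w$ is null. Since $\phi_i$ is an isometry, $g(w_i,w_i) = c_i^2\,g\big(d_{p_i}\phi_i(v_i), d_{p_i}\phi_i(v_i)\big) = c_i^2\,g(v_i,v_i)$, and because $v_i \to v$ while $c_i \to 0$ this tends to $0$; hence $g(w,w) = 0$, and as $\|w\|_{g_R} = 1$ the vector $w$ is a nonzero null vector. The second, and crucial, point is that the null geodesic $s \mapsto \exp(sw)$ is imprisoned in $K'$. By naturality of the exponential map under isometries, $\exp(s w_i) = \exp\!\big(s c_i\,d_{p_i}\phi_i(v_i)\big) = \phi_i\big(\exp(s c_i v_i)\big)$. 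The mechanism is that, because $c_i \to 0$, travelling a fixed parameter $s$ along $w_i$ corresponds to travelling the tiny parameter $s c_i$ along $v_i$: for each fixed $s$ we have $\exp(s c_i v_i) \to p \in U$, so $\exp(s c_i v_i) \in U$ for large $i$, and therefore $\exp(s w_i) = \phi_i\big(\exp(s c_i v_i)\big) \in K'$ by hypothesis. Letting $i \to \infty$ and using continuous dependence of geodesics on initial data together with closedness of $K'$, we obtain $\exp(sw) \in K'$ for every $s$ in the maximal domain of the geodesic.

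Now the escape lemma shows that a geodesic remaining in the compact set $K'$ is complete, so $s \mapsto \exp(sw)$ is defined on all of $\bR$; being a complete causal geodesic it has no endpoints and is thus an inextendible causal curve contained in the compact set $K'$, contradicting non-total imprisonment. This establishes the uniform bound $\sup\{\|d_p\phi\| : p \in K, \phi \in G\} < \infty$, and passing, if necessary, to a slightly larger compact neighborhood $\tilde K$ of $K$ in $U$ (to bound $g_R$-distances along short connecting curves) upgrades this differential bound to uniform Lipschitz continuity of $G$ on $K$. The bi-Lipschitz claim is then immediate once $G$ is a subgroup: here $G = G^{-1}$ as a set, so the hypothesis ``$\phi(p) \in K'$ for all $p \in U$'' holds for every element of $G$, in particular for every inverse $\phi^{-1}$. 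Consequently the uniform Lipschitz bound just proved applies to \emph{all} of $G$ and hence simultaneously to all the maps $\phi$ and their inverses $\phi^{-1}$, so each $\phi \in G$ is bi-Lipschitz on $K$ with a constant independent of $\phi$.

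The main obstacle is the second step: one must see that the degenerating differentials force an \emph{entire} null geodesic to be trapped in $K'$. The key observation making this work is that a large stretching factor ($c_i \to 0$) means a long image geodesic segment issues from an arbitrarily short segment near $p$, which stays in $U$ and is therefore mapped into $K'$; one then needs the standard facts that a geodesic confined to a compact set is complete (escape lemma) and that a complete causal geodesic is inextendible, in order to produce exactly the imprisoned causal curve forbidden by the hypothesis. By comparison, the null-limit computation and the subgroup reduction for the bi-Lipschitz statement are routine.
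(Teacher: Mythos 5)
Your argument tracks the paper's proof closely up to the decisive last step: the contradiction setup, the rescaled limit vector $w$ being null, and the key trapping mechanism $\exp(s w_i) = \phi_i(\exp(s c_i v_i)) \in K'$ (because the short segments $\exp(s c_i v_i)$ stay in $U$) all match, and your direct verification that $\exp(sw) \in K'$ on the whole maximal domain is a legitimate variant of what the paper does. The gap is the appeal to the ``escape lemma.'' That lemma concerns integral curves that remain in a compact subset of the space \emph{on which the flow lives}; for geodesics that space is $TM$, not $M$. Confinement of the footpoints to the compact set $K' \subset M$ does not confine the velocities to any compact subset of $TM$: a null velocity has constant $g$-norm $0$, which gives no bound on its $g_R$-norm. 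In Riemannian signature one does get compactness for free (unit-speed velocities lie in the unit sphere bundle over $K'$), but in Lorentzian signature the statement ``a geodesic remaining in a compact set is complete'' is simply false --- the Clifton--Pohl torus is a compact Lorentzian surface that is null geodesically incomplete, so its null geodesics are imprisoned in a compact set without being complete.

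Moreover, what your contradiction actually needs is not completeness but \emph{inextendibility as a causal curve} of the trapped geodesic, i.e.\ that it has no endpoint, and neither completeness nor incompleteness gives this without further argument: a priori the maximal geodesic $s \mapsto \exp(sw)$ could converge to a point $q' \in K'$ while its velocity degenerates, in which case it is extendible as a causal curve and its extension may leave $K'$, so non-total imprisonment is not contradicted. The paper closes exactly this hole by reparametrizing the image curves $\sigma_i$ by arclength of a complete Riemannian metric $h$ and invoking the Lorentzian limit curve theorem \cite[Prop.~3.31]{BEE}: since $\Vert V_i \Vert_h \to \infty$, the limit is a causal curve parametrized by $h$-arclength on $[0,\infty)$, which is automatically inextendible, and it lies in $K'$ --- the desired contradiction. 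To repair your write-up, replace the escape-lemma step by this reparametrization argument (the legitimate escape-lemma-type reasoning takes place on the compact bundle of $h$-unit causal vectors over $K'$, and shows the trapped geodesic has infinite $h$-length, hence no endpoint). The remaining ingredients --- the computation that $w$ is null, the trapping argument, and the subgroup reduction for the bi-Lipschitz claim --- coincide with the paper's proof.
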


\begin{proof}
 Let $K \subset U$ be compact, and suppose that the assertion of the theorem is not true. Then there exists a sequence of points $p_i \in K$ and group elements $\phi_i \in G$ such that $D_{p_i} \phi_i$ diverges. By compactness of $K$ and $K'$, we may assume that $p_i \to p$ and $\phi_i(p_i) \to q$. Choose a sequence of orthonormal frames $F_i$ at $p_i$ that converge to an orthonormal frame $F$ at $p$. If $D_{p_i} \phi_i (F_i)$ converges to an orthonormal frame at $q$, then by Theorem~\ref{Thm:OM}, $\phi_i$ converges in $\Isom^\uparrow(M,g)$ in the $C^1$ topology (which on $\Isom^\uparrow(M,g)$ is equivalent to the $C^0$ topology), so $D_{p_i} \phi_i$ must converge as well, contrary to our assumption.

 Let $T_i$ denote the unit timelike vector of the frame $F_i$. We conclude from the above that the sequence of orthonormal frames $D_{p_i} \phi_i (F_i)$ cannot converge. Thus there must be a subsequence that becomes degenerate in the limit, in the sense that the unit-timelike vectors $V_i = D_{p_i} \phi_i (T_i)$ of the frames $D_{p_i} \phi_i (F_i)$ tend (up to scaling) to a null vector $V$. Consider the sequence of curves
 \begin{equation*}
  \gamma_i \colon [0,\delta] \to M, \quad s \mapsto \exp(s T_i),
 \end{equation*}
 where $\exp$ denotes the exponential map with respect to the Lorentzian metric $g$. For $\delta$ small enough and all $i$ large enough, the image of $\gamma_i$ lies in $U$ (we just need a $\delta$ such that the image of $s \mapsto \exp(s T)$ for the limit vector $T = \lim T_i$ stays in $U$). Next, consider a new sequence of curves $\sigma_i \colon [0,\delta] \to M$ given by
 \begin{equation*}
  \sigma_i(s) := \phi_i \circ \gamma_i(s) = \exp(s V_i).
 \end{equation*}
 On the one hand, because the image of each $\gamma_i$ lies in $U$, we have that the image of each $\sigma_i$ lies in $K'$. On the other hand, if we reparametrize the $\sigma_i$ with respect to $h$-arclength, then by the Lorentzian limit curve theorem \cite[Prop.~3.31]{BEE}, $\sigma_i$ converges (up to a subsequence) to a limit causal geodesic $\sigma$ with image contained in $K'$. But up to reparametrization, $\sigma$ can be nothing else than the null geodesic
 \begin{equation*}
  \sigma \colon [0,\infty) \to M, \quad s \mapsto \exp(sV),
 \end{equation*}
 where the domain of definition is $[0,\infty)$ because $\Vert V_i \Vert_h \to \infty$. We reach a contradiction to non-total imprisonment, because then $\sigma$ would be an inextendible causal curve contained in the compact set $K'$.

 It follows that $\Vert D \phi \Vert_h$ must be bounded on $K \subset U$, and the bound can be chosen uniform in $\phi \in G$. If $G$ is a subgroup, then for all $\phi \in G$, the same bound also applies to $\phi^{-1} \in G$. This concludes the proof.
\end{proof}

\subsection{Spacetimes with a regular cosmological time function}

While in Section \ref{sec:Isomproper} we have exploited the NOH to construct an invariant (up to translations) time function, there are also cases where such a function is already naturally present. One example is the cosmological time function
\begin{equation*}
 \tau(p) := \sup \left\{ L_g(\gamma) \mid \gamma \colon [0,1) \to \infty \text{ past-directed causal with } \gamma(0)=p \right\},
\end{equation*}
where $L_g(\gamma)$ denotes the Lorentzian length of $\gamma$. Andersson, Galloway and Howard call $\tau$ regular when it is finite and $\tau \to 0$ along every past-inextendible causal curve. This regularity assumption implies a number of nice properties for $\tau$ and $(M,g)$, including that $\tau$ is indeed a continuous time function, and that $(M,g)$ is globally hyperbolic \cite{AGHCosmo}. It also has the following implication on the isometry group.

\begin{thm} \label{thm:cosmo}
 Let $(M,g)$ be a spacetime with compact Cauchy surfaces. If the cosmological time function $\tau$ is regular, then $\Isom^\uparrow(M,g)$ is compact.
\end{thm}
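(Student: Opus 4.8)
The plan is to exploit that the cosmological time function $\tau$ is \emph{intrinsic}: since every $\phi \in \Isom^\uparrow(M,g)$ preserves $g$, the time orientation, and hence both Lorentzian length and the class of past-directed causal curves, one has $\tau \circ \phi = \tau$. Thus $\tau$ is automatically invariant (this is stronger than the mere $d\tau$-invariance of Corollary~\ref{cor:timeIntro}, and it is what replaces the NFOH here). In particular, every orbit of $\Isom^\uparrow(M,g)$ is confined to a single level set of $\tau$. The strategy is then to \emph{(a)} show that suitable level sets of $\tau$ are compact, and \emph{(b)} rule out that the differentials $D\phi$ degenerate, so that the orbit of a single frame stays in a compact subset of $O(M)$; compactness of the group then follows from Theorem~\ref{Thm:OM}.

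First I would establish compactness of \emph{low} level sets. By regularity, $(M,g)$ is globally hyperbolic, so by Theorem~\ref{thm:split} we may write $M = \bR \times \Sigma$ via a Cauchy temporal function $\sigma$, with each slice $\{c\}\times\Sigma$ a compact Cauchy surface and each vertical curve $s \mapsto (s,x)$ future-directed timelike. Fix the slice $S_2 := \{0\}\times\Sigma$ and set $m := \min_{S_2}\tau$, which is strictly positive (each point has a non-trivial timelike past) and attained by compactness. Along each vertical curve $\tau$ increases strictly, and the curve being past-inextendible as $s \to -\infty$, regularity gives $\tau(s,x) \to 0$. Since $x \mapsto \tau(c_1,x)$ is continuous on the compact $\Sigma$ and decreases monotonically to $0$ as $c_1 \to -\infty$, Dini's theorem yields uniform convergence; hence for $c_1$ sufficiently negative the slice $S_1 := \{c_1\}\times\Sigma$ satisfies $\max_{S_1}\tau < \delta$ for any prescribed $\delta > 0$. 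Now choose $c \in (0,m)$ and $\epsilon > 0$ with $0 < c - \epsilon$ and $c + \epsilon < m$, and pick $S_1$ with $\max_{S_1}\tau < c - \epsilon$. Because $S_1,S_2$ are Cauchy and $\tau$ is monotone along causal curves, one gets the trapping $\tau^{-1}([c-\epsilon,c+\epsilon]) \subseteq J^+(S_1)\cap J^-(S_2)$: a point with $\tau \ge c-\epsilon$ cannot lie in $I^-(S_1)$ (else a future causal curve to $S_1$ would force $\tau$ above its maximum on $S_1$), and symmetrically for $S_2$. Global hyperbolicity makes $K' := J^+(S_1)\cap J^-(S_2)$ compact, so the whole band $\tau^{-1}([c-\epsilon,c+\epsilon])$ is relatively compact.

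With this in hand I would finish as follows. Pick $p$ with $\tau(p) = c$ and a small open ball $U \ni p$ with $\tau(U)\subseteq(c-\epsilon,c+\epsilon)$. By invariance of $\tau$, we have $\phi(U) \subseteq \tau^{-1}(\tau(U)) \subseteq K'$ for every $\phi \in \Isom^\uparrow(M,g)$. Since global hyperbolicity implies non-total imprisonment, Theorem~\ref{thm:biLip} applies and gives a uniform bi-Lipschitz bound for all $\phi \in \Isom^\uparrow(M,g)$ on a compact neighborhood $K \subseteq U$ of $p$. Fixing a frame $F_p$, this bound prevents the frames $D_p\phi(F_p)$ from degenerating (their timelike vectors cannot tend to a null vector), while their basepoints $\phi(p)$ remain in the compact level set $\tau^{-1}(c)$; hence $\{D_p\phi(F_p)\mid \phi\in\Isom^\uparrow(M,g)\}$ lies in a compact subset $C$ of $O(M)$. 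By Theorem~\ref{Thm:OM} the orbit map $\Theta_{F_p}$ is proper, so $\Theta_{F_p}^{-1}(C)$ is compact; as $\Isom^\uparrow(M,g)$ is a closed subgroup contained in $\Theta_{F_p}^{-1}(C)$, it is compact.

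I expect the main obstacle to be step \emph{(a)}, the compactness of level sets of $\tau$. The point is that regularity only controls $\tau$ towards the past (where $\tau \to 0$) and gives no a priori control towards the future, so one cannot simply trap a level set between two Cauchy surfaces of arbitrarily large $\tau$. The device circumventing this is to work with a level set of \emph{small} value $c < m$: trapping from above then only requires that $\tau$ be bounded below by a positive constant on one fixed future Cauchy surface, which is automatic, while trapping from below is made uniform by the Dini argument. A secondary subtlety is that, unlike in the NFOH setting of Theorem~\ref{thm:Isomproper}, we do not know a priori that the action on $M$ is proper; this is precisely why the degeneration of frames must be excluded by hand, via Theorem~\ref{thm:biLip}, rather than by invoking properness on $M$.
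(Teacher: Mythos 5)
Your proposal is correct, and its skeleton is the same as the paper's proof: invariance of the cosmological time $\tau$ under $\Isom^\uparrow(M,g)$ (which replaces the NFOH-based construction), compactness of a region of small $\tau$-values, Theorem~\ref{thm:biLip} to exclude degeneration of the differentials, and Theorem~\ref{Thm:OM} to conclude compactness of the group. The one place where you genuinely diverge is the compactness step. The paper fixes a compact Cauchy surface $\Sigma_1$, sets $\varepsilon := \min_{\Sigma_1}\tau$, and proves that the level set $\Sigma_2 := \tau^{-1}(\varepsilon/2)$ is compact by flowing a past-directed timelike vector field: regularity forces every flowline to cross $\Sigma_2$, so the flow restricts to a homeomorphism $\Sigma_1 \to \Sigma_2$. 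You instead produce, via Dini's theorem applied to the slices of a Cauchy splitting, a compact Cauchy surface $S_1$ on which $\tau$ is uniformly small, and then trap the band $\tau^{-1}([c-\epsilon,c+\epsilon])$ inside $J^+(S_1)\cap J^-(S_2)$, which is compact by global hyperbolicity. Both arguments are sound. Your trapping variant has the merit of explicitly establishing compactness of the set $K'$ fed into Theorem~\ref{thm:biLip}; the paper takes $K' = \tau^{-1}([a,b])$ with $0<a<\varepsilon/2<b<\varepsilon$ and leaves its compactness implicit (it follows by the same kind of reasoning). Conversely, the paper's flow argument is shorter and gives slightly more: the low level sets of $\tau$ are topological copies of a Cauchy surface, a fact the paper exploits right after the proof to conclude (with a citation) that these level sets are themselves Cauchy surfaces.
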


\begin{proof}
Let $\Sigma_1$ be a Cauchy surface in $M$. By continuity and compactness, $\tau$ attains a minimum value $\varepsilon$ on $\Sigma_1$. Let $\Sigma_2 := \tau^{-1}(\varepsilon/2)$. By construction, $\Sigma_2$ lies in the past of $\Sigma_1$. Choose any smooth past-directed timelike vector field $X$ on $(M,g)$ (e.g.\ the time-orientation vector field). By regularity of $\tau$, every flowline of $X$ must intersect $\Sigma_2$. Hence, the flow of $X$ provides a homeomorphism $\Sigma_1 \to \Sigma_2$, and we conclude that $\Sigma_2$ is compact. Since, by construction, $\tau$ is invariant under the action of $\Isom^\uparrow(M,g)$, it follows that for every $\phi \in \Isom^\uparrow(M,g)$,
 \begin{equation*}
  \phi\left( \Sigma_2 \right) \cap \Sigma_2 \neq \emptyset.
 \end{equation*}
 Compactness of $\Isom^\uparrow(M,g)$ now follows by Theorem~\ref{Thm:OM} if we show that for every sequence of points $p_i \in \Sigma_2$ and isometries $\phi_i$ such that $p_i$ and $\phi_i(p_i)$ converge, there is a converging sequence of orthonormal frames $F_i$ at $p_i$ such that a subsequence of $D \phi_i (F_i)$ converges. The latter holds because $\Isom^\uparrow(M,g)$ is bi-Lipschitz, hence $D \phi_i (F_i)$ stays in a compact set of $\OG(M)$. Bi-Lipschitzianity follows from Theorem~\ref{thm:biLip} applied to $G = \Isom^\uparrow(M,g)$, $U = \tau^{-1}((a,b))$, $K = \Sigma_2$, and $K' = \tau^{-1}([a,b])$ for $0 < a < \varepsilon/2 < b < \epsilon$.
\end{proof}

It is a consequence of the above proof (combined with \cite[Cor.~1]{GalCauchy}) that the level sets of the cosmological time function for small values (smaller than $\varepsilon$) are Cauchy surfaces. This had previously been proven in \cite{GalGar} for all level sets, under the additional assumption that the spacetime is future timelike geodesically complete. Theorem~\ref{thm:cosmo} also applies to the regular cosmological volume functions introduced in \cite[Sec.~6]{GarVol}, as these are also, by construction, invariant under isometries.

We end this section with a class of examples where Theorem~\ref{thm:cosmo} is applicable, while Corollary~\ref{cor:prodintro} is not, because the NOH is not satisfied.

\begin{exam} \label{exam:cone}
 Let $ \mathrm{Co}^{1, n}$ be the (solid) lightcone of the Minkowski spacetime $\bR^{1, n}$ (i.e., the chronological future $I^+(0)$ of $0$). It is foliated by homothetics of hyperbolic space $\mathbb H^n$ (see Figure~\ref{fig:cone}). Let $\Gamma $ be a co-compact (torsion-free) lattice in the Lorentz group $\mathrm{SO}(1, n)$, meaning that $\mathbb H^n / \Gamma$ is a closed hyperbolic manifold. The quotient $M_\Gamma =\mathrm{Co}^{1, n} / \Gamma$ is globally hyperbolic with compact Cauchy surfaces. Furthermore, it is flat, i.e., locally isometric to Minkowski. In fact, $M_\Gamma$ is a warped product $M_\Gamma = \mathbb R^+ \times_w S$, with $S = \mathbb H^n / \Gamma$ and warping function $w(r) = r^2$ (as in the case of polar coordinates in Euclidean spaces). The radius function (i.e., the projection onto $\bR^+$) turns out to be the cosmological time of $M_\Gamma$. Finally, $M_\Gamma$ satisfies the NOH.

 Now, it turns out that  besides this exact solution, there are deformations of it, enjoying the same properties, except for the NOH. For this, let $\Gamma^\prime$ be an affine deformation of $\Gamma$, that is, $\Gamma^\prime$ is a subgroup of the Poincar\'e group $\mathrm{Poin}^{1, n}$, such that every $\gamma^\prime \in \Gamma^\prime$ is of the form $x \to \gamma(x) + t_\gamma$, where $\gamma \in \Gamma$, and $t_\gamma$ is a translation vector. It happens that this deformation can be followed by a deformation of $ \mathrm{Co}^{1, n}$  which becomes a convex globally hyperbolic domain $\mathrm D_{\Gamma^\prime}$ in Minkowski, whose quotient by $\Gamma^\prime$ is a globally hyperbolic spacetime $M_{\Gamma^\prime}$ with compact Cauchy surfaces. It is regular from the cosmological time function point of view, but does not satisfy NOH (unless the deformation is trivial). Its horizon is complicated, and the initial singularity becomes like a $1$-dimensional space, in the case $n = 2$, contrary to the big-bang case of $\mathrm{Co}^{1, n}$. See \cite{Bon,Mess} for more details.
\end{exam}

\begin{figure}
 \centering\includegraphics{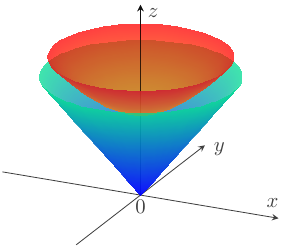}
 \label{fig:cone}
 \caption{The cone $\mathrm{Co}^{1, n}$ from Example~\ref{exam:cone} (in blue), with one of the hyperboloidal sheets depicted in red.}
\end{figure}

\bibliographystyle{abbrv}
\bibliography{refs.bib}

\end{document}